%%%%%%%%%%%%%%%%%%%%%%%%%%%%%%%%%%%%%%%%%%%%%%
% Casacuberta-Gutierrez-Moerdijk-Vogt
% Final version: June 18, 2008
%%%%%%%%%%%%%%%%%%%%%%%%%%%%%%%%%%%%%%%%%%%%%%

\documentclass{amsart}
\usepackage{amssymb}
\usepackage{amsmath}
\usepackage{amsthm}
\usepackage[all]{xy}
\usepackage{eucal}

\SelectTips{cm}{}
\theoremstyle{plain}
\newtheorem{thm}{Theorem}[section]
\newtheorem{cor}[thm]{Corollary}
\newtheorem{lem}[thm]{Lemma}
\newtheorem{prop}[thm]{Proposition}
\theoremstyle{definition}
\newtheorem{defn}[thm]{Definition}
\theoremstyle{remark}
\newtheorem{rem}[thm]{Remark}

\newcommand{\Z}{{\mathbb Z}}
\newcommand{\Q}{{\mathbb Q}}
\newcommand{\C}{\mathcal{C}}
\newcommand{\M}{\mathcal{M}}
\newcommand{\E}{\mathcal{E}}

\newcommand{\sSets}{{\rm sSets}}
\newcommand{\Ass}{{\mathcal{A}\rm ss}}
\newcommand{\Com}{{\mathcal{C}\rm om}}
\newcommand{\Ab}{{\mathcal{A}\rm b}}
\newcommand{\Mod}{{\mathcal{M}\rm od}}
\newcommand{\LMod}{{\mathcal{L}\rm Mod}}
\newcommand{\RMod}{{\mathcal{R}\rm Mod}}
\newcommand{\BMod}{{\mathcal{B}\rm Mod}}
\newcommand{\Mor}{{\mathcal{M}\rm or}}
\newcommand{\Ho}{{\mathrm{Ho}}}
\newcommand{\Hom}{\mathop{\textrm{\rm Hom}}}
\newcommand{\Map}{\mathop{\textrm{\rm Map}}}
\newcommand{\map}{\mathop{\textrm{\rm map}}}
\newcommand{\End}{\mathop{\textrm{\rm End}}}

\numberwithin{equation}{section}

\begin{document}
\title{Localization of algebras over coloured operads}
\author[C. Casacuberta]{Carles Casacuberta}
\thanks{The first and second named authors were supported by the Spanish Ministry of Education
and Science under grants MTM2004\nobreakdash-03629, MTM2007\nobreakdash-63277,
and EX2005\nobreakdash-0521}
\address{Departament d'\`Algebra i Geometria, Universitat de Barcelona,
Gran Via de les Corts Catalanes, 585, 08007 Barcelona, Spain}
\email{carles.casacuberta@ub.edu}
\author[J. J. Guti\'errez]{Javier J. Guti\'errez}
\address{Centre de Recerca Matem\`atica, Apartat 50, 08193 Bellaterra, Spain}
\email{jgutierrez@crm.cat}
\author[I. Moerdijk]{\\ Ieke Moerdijk}
\address{Mathematisch Instituut, Postbus 80.010, 3508 TA Utrecht, The Netherlands}
\email{i.moerdijk@uu.nl}
\author[R. M. Vogt]{Rainer M. Vogt}
\address{Universit\"at Osnabr\"uck, Fachbereich Mathematik/Informatik, Albrechtstr.\ 28,
49069 Osnabr\"uck, Germany}
\email{rainer@mathematik.uni\nobreakdash-osnabrueck.de}

\keywords{Coloured operad; localization; ring spectrum; module spectrum}
\subjclass[2000]{Primary: 55P43; Secondary: 18D50, 55P60}

\begin{abstract}
We give sufficient conditions for homotopical localization functors to preserve algebras over
coloured operads in monoidal model categories. Our approach encompasses a number of previous results
about preservation of structures under localizations, such as loop spaces or infinite loop spaces,
and provides new results of the same kind.
For instance, under suitable assumptions, homotopical localizations preserve ring spectra
(in the strict sense, not only up to homotopy), modules over ring spectra, and algebras over
commutative ring spectra, as well as ring maps, module maps, and algebra maps.
It is principally the treatment of module spectra
and their maps that led us to the use of coloured operads (also called enriched multicategories)
in this context.
\end{abstract}
\maketitle

\section*{Introduction}
\label{introduction}
A remarkable property of localizations in homotopy theory is the fact that they preserve many kinds of algebraic structures.
That is, if a space or a spectrum $X$ is equipped with some structure and $L$ is a homotopical localization functor
(such as, for example, localization at a set of primes, localization with respect to a homology theory, or a Postnikov section),
very often $LX$ admits the same structure as~$X$, in fact in a unique way (up to homotopy) if we impose the condition
that the localization map $X\longrightarrow LX$ be compatible with the structure.

For instance, it is known that $f$\nobreakdash-localizations in the sense of Bousfield \cite{Bou94}, \cite{Bou96} and Farjoun \cite{Far96}
preserve the classes of homotopy associative $H$\nobreakdash-spaces, loop spaces, and infinite loop spaces, among others.
Such $f$\nobreakdash-localizations also preserve GEMs (i.e., products of Eilenberg\nobreakdash--Mac Lane spaces), as explained in \cite{Far96},
as well as other classes of spaces defined by means of algebraic theories \cite{Bad02}.

In the stable homotopy category, $f$\nobreakdash-localizations
that commute with the suspension operator preserve homotopy ring spectra and homotopy module spectra \cite{CG05}.
Furthermore, if a homotopy ring spectrum $R$ is connective, then the class of homotopy modules over $R$
is preserved by all $f$\nobreakdash-localizations, not necessarily commuting with suspension; see \cite{Bou99}, \cite{CG05}.
As a consequence, the class of stable GEMs is preserved by all $f$\nobreakdash-localizations, since stable
GEMs are precisely homotopy modules over the integral Eilenberg\nobreakdash--Mac Lane spectrum~$H\Z$.
(Note that either some connectivity condition or the assumption that the given localization commutes with suspension is necessary, 
since Postnikov sections of Morava $K$\nobreakdash-theory spectra $K(n)$ need neither be
homotopy ring spectra nor homotopy modules over $K(n)$, as observed by Rudyak in~\cite{Rud98}.)

A common feature of these examples is that they can be described in terms of algebras over operads or,
in some cases, algebras over coloured operads.
Coloured operads first appeared in the book of Boardman and Vogt \cite{BV73} on
homotopy invariant algebraic structures on topological spaces. They can be viewed as
multicategories \cite{Lam69} enriched over a symmetric monoidal category and equipped with a symmetric group action.
Under suitable conditions, coloured operads carry a model structure (see \cite{BM03}, \cite{BM07}),
which enables one to speak in a systematic way, for a coloured operad $P$, about
homotopy $P$\nobreakdash-algebras as being $P_{\infty}$\nobreakdash-algebras, for a cofibrant resolution $P_{\infty}\longrightarrow P$.

In this article, we study the preservation of classes of algebras over coloured operads under the effect
of localizations in closed symmetric monoidal categories, and under the effect of homotopical localizations
in simplicial or topological monoidal model categories (see \cite{Qui67} or \cite{Hov99} for background about model categories).

We prove the following.
Let $C$ be any set and $P$ a \emph{cofibrant} $C$\nobreakdash-coloured operad
in the category of simplicial sets (or compactly generated spaces) acting on a simplicial
(or topological) monoidal model category~$\M$. Let $L$ be a homotopical
localization functor on $\M$ whose class of equivalences is closed under tensor products.
If ${\bf X}=(X(c))_{c\in C}$ is a $P$\nobreakdash-algebra with $X(c)$ cofibrant for all $c$,
then $L{\bf X}=(LX(c))_{c\in C}$ admits a homotopy unique $P$\nobreakdash-algebra structure such that the
localization map ${\bf X}\longrightarrow L{\bf X}$ is a map of $P$\nobreakdash-algebras. See Theorem~\ref{mainthm}
below for a more general variant of this statement.

As an example of this result, we mention the following fact, which
has been known in slightly more restrictive forms for several years.
Let $L$ be a homotopical localization functor on the category of simplicial sets or compactly generated spaces.
If $X$ is a cofibrant $A_{\infty}$\nobreakdash-space, then $LX$ has a homotopy unique $A_{\infty}$\nobreakdash-space structure such that
the localization map $X\longrightarrow LX$ is a map of $A_{\infty}$\nobreakdash-spaces.
The same statement is true for $E_{\infty}$\nobreakdash-spaces. (Here and throughout we denote
by $A_{\infty}$ a cofibrant replacement of the associative operad $\Ass$, and by $E_{\infty}$
a cofibrant replacement of the commutative operad $\Com$.)
Since any $A_{\infty}$\nobreakdash-space is weakly equivalent to a topological monoid,
this result implies that homotopical localizations preserve topological monoids up to homotopy.
Moreover, since nontrivial homotopical localizations induce bijections on connected components,
they also preserve loop spaces (that is, group\nobreakdash-like $A_{\infty}$\nobreakdash-spaces) up to homotopy.

In the stable case, our result is illustrated as follows.
Let $L$ be a homotopical localization functor on the category of symmetric spectra \cite{HSS00}.
Let $M$ be an $A_{\infty}$\nobreakdash-module over an $A_{\infty}$\nobreakdash-ring $R$, where both $M$
and $R$ are assumed to be cofibrant as spectra.
Firstly, if $R$ is connective or the functor $L$ commutes with suspension, then $LM$ has a homotopy unique
$A_{\infty}$\nobreakdash-module structure over $R$ such that the localization map $M\longrightarrow LM$ is a map
of $A_{\infty}$\nobreakdash-modules.
Secondly, if $L$ commutes with suspension, then $LR$ has a homotopy unique
$A_{\infty}$\nobreakdash-ring structure such that the localization map $R\longrightarrow LR$ is a map of $A_{\infty}$\nobreakdash-rings,
and $LM$ then admits a homotopy unique $A_{\infty}$\nobreakdash-module structure over $LR$ extending the $A_{\infty}$\nobreakdash-module
structure over~$R$. If $L$ does not commute with suspension, then the same holds if we
assume that $R$, $LR$, and at least one of $M$ and $LM$ are connective.
The same statements are true if $A_{\infty}$ is replaced by $E_{\infty}$.
(We emphasize that $E_{\infty}$\nobreakdash-algebras are weakly equivalent to commutative monoids in the category of
symmetric spectra, according to \cite{GH04} or~\cite {EM06}, but not in the category of
simplicial sets ---since infinite loop spaces need not be GEMs--- or in other monoidal model categories.)

We subsequently deduce the preservation of strict ring structures (also commutative)
and strict module structures under homotopical localizations in the category of symmetric spectra.
In~fact, we show in Section~\ref{rectifications} that each localization of a ring morphism between
strict ring spectra is naturally weakly equivalent (in the category of maps between spectra)
to a ring morphism, and similarly for $R$\nobreakdash-modules,
under appropriate connectivity assumptions.
For this, we view such morphisms as algebras over coloured operads,
as in \cite[2.9]{MSS02} or~\cite[1.5.3]{BM07}, and use the corresponding functorial rectifications
from~\cite{EM06}. (Rectification of algebras has been studied in the context of categories
with cartesian product by Badzioch \cite{Bad02} and Bergner \cite{Ber06}.)

As we show in the last section, it is also true that, for every commutative
ring spectrum~$R$, the class of $R$\nobreakdash-algebras is preserved under homotopical localizations commuting
with suspension. If the localization is homological, then the localized $R$\nobreakdash-algebra structures
coincide up to homotopy with those obtained in \cite[Theorem VIII.2.1]{EKMM97}.
In another direction,
it was proved in \cite{Laz01} and \cite{DS06} that Postnikov pieces of connective $R$\nobreakdash-algebras
admit compatible $R$\nobreakdash-algebra structures, provided that $R$ is itself connective. Our approach also
yields this as a special case.

When we refer to the model category of symmetric spectra, we will understand
it in the sense of the positive stable model structure, which was discussed in
\cite{MMSS01}, \cite{Sch01}, or \cite{Shi04}.
Likewise, when we speak of compactly generated spaces we mean
$k$\nobreakdash-spaces without any separation condition, as in \cite{Vog71}, equipped with Quillen's
model category structure (given by weak homotopy equivalences, Serre fibrations, and the corresponding cofibrations).
This model structure has the advantage of being cofibrantly generated, which ensures
the validity of certain results that could fail to hold otherwise, mainly the existence of
an adequate model category structure on the category of coloured operads over a fixed set of colours.
For certain purposes, however, it is more convenient to consider the $k$\nobreakdash-space version of the Str{\o}m model
category structure \cite{Str72}, with genuine homotopy equivalences,
Hurewicz fibrations and closed cofibrations. In this case, all spaces are fibrant and cofibrant.
Although this model category is not known to be cofibrantly generated, one can still speak about operads being cofibrant,
in the sense of having a left lifting property with respect to morphisms that induce trivial fibrations
in the underlying category. It was proved in \cite{Vog03} that the 
$W$\nobreakdash-construction yields operads that are cofibrant in this sense.

\medskip

\noindent
\textit{Acknowledgements}. The plausibility of an interaction between localizations and operads
was seen by several people shortly after the development of $f$\nobreakdash-localizations, at a moment
where the technical machinery for a broad statement and proof was not yet fully available.
Some of us had discussions on this topic with W.~Chach\'olski, G.~Granja, B.~Richter, B.~Shipley, and J.~H.~Smith.
Part of this work was done while the second\nobreakdash-named author was visiting Utrecht University.

\section{Coloured operads and their algebras}
\label{background}
In the first two sections, $\E$ will denote a cocomplete closed symmetric monoidal category
with tensor product $\otimes$, unit $I$, and internal hom functor $\Hom_{\E}(-,-)$.
We denote by $0$ the initial object of $\E$, that is, a colimit of the empty diagram.

Let $\Sigma_n$ denote the symmetric group on $n$ elements (which is meant to be the trivial group
if $n=0$ and $n=1$), and let $C$ be a set, whose elements will be called \emph{colours}.
A \emph{$C$\nobreakdash-coloured collection} $K$ in $\E$ consists of a set of objects $K(c_1,\ldots, c_n; c)$ in $\E$
for every $n\ge 0$ and each $(n+1)$\nobreakdash-tuple of colours $(c_1,\ldots, c_n; c)$, equipped with a right
action of $\Sigma_n$ by means of maps
\[
\sigma^*\colon K(c_1,\ldots,c_n; c)\longrightarrow K(c_{\sigma(1)},\ldots,c_{\sigma(n)}; c)
\]
where $\sigma\in\Sigma_n$.
The objects corresponding to $n=0$ are denoted by $K(\; ; c)$.

A \emph{morphism} $F\colon K\longrightarrow K'$ of $C$\nobreakdash-coloured collections is a family of maps
\[
K(c_1,\ldots, c_n; c)\longrightarrow K'(c_1,\ldots, c_n; c)
\]
in $\E$, ranging over $n\ge 0$ and all $(n+1)$\nobreakdash-tuples of colours $(c_1,\ldots, c_n;c)$,
compatible with the action of the symmetric groups.
We denote by ${\rm Coll}_C(\E)$ the category of $C$\nobreakdash-coloured collections in $\E$ with their morphisms,
for a fixed set of colours $C$.

A \emph{$C$\nobreakdash-coloured operad} $P$ in $\mathcal{E}$ is a $C$\nobreakdash-coloured collection equipped with
a \emph{unit} map $I\longrightarrow P(c;c)$ for each $c$ in $C$ and,
for every $(n+1)$\nobreakdash-tuple of colours $(c_1,\ldots, c_n;c)$ and $n$ given tuples
\[
(a_{1,1},\ldots, a_{1,k_1}; c_1),\ldots, (a_{n,1},\ldots, a_{n,k_n}; c_n),
\]
a \emph{composition product} map
\begin{gather}\notag
P(c_1,\ldots, c_n;c)\otimes P(a_{1,1},\ldots, a_{1,k_1};c_1)\otimes\cdots\otimes P(a_{n,1},\ldots, a_{n,k_n};c_n)
\\ \longrightarrow P(a_{1,1},\ldots,a_{1,k_1},a_{2,1},\ldots,a_{2,k_2},\ldots,a_{n,1},\ldots,a_{n,k_n};c), \notag
\end{gather}
compatible with the action of the symmetric groups and subject to associativity
and unitary compatibility relations; see, e.g., \cite[\S 2]{EM06} for a depiction of the diagrams involved.
Thus, we may view a $C$\nobreakdash-coloured operad $P$ as a \textit{multicategory} enriched over $\E$, where
the hom objects $P(c_1,\ldots,c_n;c)$ have $n$ inputs and one output.

A morphism of $C$\nobreakdash-coloured operads is a morphism of the underlying $C$\nobreakdash-coloured collections
that is compatible with the unit maps and the composition product maps.
The category of $C$\nobreakdash-coloured operads in $\E$ will be denoted by ${\rm Oper}_C(\E)$.
As shown in \cite[Appendix]{BM07}, the category of $C$\nobreakdash-coloured collections admits a monoidal structure
in which the monoids are precisely the $C$\nobreakdash-coloured operads.

We note, for later use, that the forgetful functor ${\rm Oper}_C(\E)\longrightarrow {\rm Coll}_C(\E)$ reflects
isomorphisms, that is, if a morphism of $C$\nobreakdash-coloured operads induces an isomorphism of the underlying collections,
then it is an isomorphism.

If we forget the symmetric group actions in all the definitions given so far, we obtain
\emph{non\nobreakdash-symmetric coloured collections} and \emph{non\nobreakdash-symmetric coloured operads}.
There is a forgetful functor from $C$\nobreakdash-coloured operads to non\nobreakdash-symmetric $C$\nobreakdash-coloured
operads, which has a left adjoint $\Sigma$ defined by a coproduct
\begin{equation}
\label{leftadjoint}
(\Sigma P)(c_1,\ldots, c_n;c)=\coprod_{\sigma\in\Sigma_n}P(c_{\sigma^{-1}(1)},\ldots, c_{\sigma^{-1}(n)}; c).
\end{equation}

If $C=\{c\}$, then a $C$\nobreakdash-coloured operad $P$ is just an ordinary operad,
where one writes $P(n)$ instead of $P(c,\ldots,c;c)$ with $n$ inputs. Here we recall that
the (non\nobreakdash-symmetric) \textit{associative operad} $\Ass$ is defined as $\Ass(n)=I$ for $n\ge 0$.
Its symmetric version, which we keep denoting by $\Ass$ if no confusion can arise, is
therefore given by $\Ass(n)=I[\Sigma_n]$ for $n\ge 0$,
where $I[\Sigma_n]$ denotes a coproduct of copies of the unit $I$ indexed by $\Sigma_n$,
on which $\Sigma_n$ acts freely by permutations.
The (symmetric) \textit{commutative operad} $\Com$ is defined as $\Com(n)=I$ for $n\ge 0$.

Algebras over coloured operads are defined as follows (by specializing to a single colour,
one recovers the usual notion of algebras over operads).
Let us denote by~$\E^C$ the product category of copies of $\E$ indexed by the set of colours $C$.
For every object ${\bf X}=(X(c))_{c\in C}$ in $\mathcal{E}^C$, a $C$\nobreakdash-coloured operad ${\rm End}({\bf X})$
in $\E$ is defined by
\[
{\rm End}({\bf X})(c_1,\ldots, c_n; c)=\Hom\nolimits_{\mathcal{E}}(X(c_1)\otimes\cdots\otimes X(c_n), X(c)),
\]
where $X(c_1)\otimes\cdots\otimes X(c_n)$ is meant to be $I$ if $n=0$.
The composition product is ordinary composition and the $\Sigma_n$\nobreakdash-action is defined
by permutation of the factors. The $C$\nobreakdash-coloured operad ${\rm End}({\bf X})$ is called the
\emph{endomorphism coloured operad} of the object ${\bf X}$ of $\E^C$.
Similarly, given a morphism ${\bf f}\colon{\bf X}\longrightarrow {\bf Y}$ in $\E^C$, i.e.,
a $C$\nobreakdash-indexed family of maps
$(f_c\colon X(c)\longrightarrow Y(c))_{c\in C}$ in $\E$, there is a $C$\nobreakdash-coloured operad
${\rm End}(\mathbf{f})$, defined as the pullback of the following diagram of $C$\nobreakdash-coloured collections:
\begin{equation}
\xymatrix{
{\rm End}({\bf f})\ar[r] \ar[d] & {\rm End}({\bf X}) \ar[d] \\
{\rm End}({\bf Y})\ar[r] & {\rm Hom}({\bf X}, {\bf Y}),
}
\label{endocolor}
\end{equation}
where the $C$\nobreakdash-coloured collection ${\rm Hom}({\bf X}, {\bf Y})$ is defined as
\[
{\rm Hom}({\bf X}, {\bf Y})(c_1,\ldots, c_n; c)=\Hom\nolimits_{\mathcal{E}}(X(c_1)\otimes\cdots\otimes X(c_n), Y(c)),
\]
and the arrows ${\rm End}({\bf X})\longrightarrow {\rm Hom}({\bf X}, {\bf Y})$ and
${\rm End}({\bf Y})\longrightarrow {\rm Hom}({\bf X}, {\bf Y})$
are induced by ${\bf f}$ by composing on each side.
The $C$\nobreakdash-coloured collection ${\rm End}({\bf f})$ inherits indeed a $C$\nobreakdash-coloured operad structure
from the $C$\nobreakdash-coloured operads
${\rm End}({\bf X})$ and ${\rm End}({\bf Y})$, as observed in \cite[Theorem~3.5]{BM03}.

Given a $C$\nobreakdash-coloured operad $P$ in $\E$, an \emph{algebra over $P$} or a \emph{$P$\nobreakdash-algebra} is an object
${\bf X}=(X(c))_{c\in C}$ of $\mathcal{E}^C$ together with a morphism
\[
P\longrightarrow {\rm End}({\bf X})
\]
of $C$\nobreakdash-coloured operads.
Equivalently, an algebra over a $C$\nobreakdash-coloured operad $P$ can be defined as a family of objects
$X(c)$ in $\E$, for all $c\in C$, together with maps
\[
P(c_1,\ldots, c_n;c)\otimes X(c_1)\otimes\cdots\otimes X(c_n)\longrightarrow X(c)
\]
for every $(n+1)$\nobreakdash-tuple $(c_1,\dots,c_n;c)$, compatible with the symmetric group action,
associativity, and the unit of $P$.

\begin{rem}
\label{nonsymmetric}
If $P$ is a non\nobreakdash-symmetric $C$\nobreakdash-coloured operad $P$, then $P$\nobreakdash-algebras are defined in the same way,
by forgetting the symmetric group action on ${\rm End}({\bf X})$.
If $\Sigma$ denotes the left adjoint (\ref{leftadjoint}) of the forgetful functor
from symmetric to non\nobreakdash-symmetric $C$\nobreakdash-coloured operads, then, for a non\nobreakdash-symmetric $C$\nobreakdash-coloured operad $P$,
there is a bijective correspondence
between the $P$\nobreakdash-algebra structures and the $\Sigma P$\nobreakdash-algebra structures on an object ${\bf X}$ of $\E^C$.
\end{rem}

If ${\bf X}=(X(c))_{c\in C}$ and ${\bf Y}=(Y(c))_{c\in C}$ are $P$\nobreakdash-algebras, a \emph{map of $P$\nobreakdash-algebras}
${\bf f}\colon {\bf X}\longrightarrow {\bf Y}$ is a family of maps $f_c\colon X(c)\longrightarrow Y(c)$
in $\E$, for all $c\in C$, that are compatible with the $P$\nobreakdash-algebra structures on ${\bf X}$ and ${\bf Y}$, i.e.,
the following diagram commutes for all $(n+1)$\nobreakdash-tuples $(c_1,\ldots, c_n; c)$:
\[
\xymatrix{
P(c_1,\ldots,c_n;c)\otimes X(c_1)\otimes\cdots\otimes X(c_n) \ar[r] \ar[d]_{{\rm id}\otimes
f_{c_1}\otimes\cdots\otimes f_{c_n}} &
X(c)\ar[d]^{f_c}\\
P(c_1,\ldots,c_n;c)\otimes Y(c_1)\otimes\cdots\otimes Y(c_n) \ar[r] & Y(c).
}
\]
For a map ${\bf f}\colon{\bf X}\longrightarrow {\bf Y}$ in $\E^C$,
giving a morphism of $C$\nobreakdash-coloured operads
\[
P\longrightarrow {\rm End}(\mathbf{f})
\]
is equivalent by (\ref{endocolor}) to giving a $P$\nobreakdash-algebra structure on ${\bf X}$
and a $P$\nobreakdash-algebra structure on ${\bf Y}$ such that ${\bf f}$ is a map of $P$\nobreakdash-algebras.
The category of $P$\nobreakdash-algebras in $\E$ will be denoted by ${\rm Alg}_P(\E)$.

Given a $C$\nobreakdash-coloured operad $P$ and an object ${\bf X}=(X(c))_{c\in C}$
in $\E^C$, we define the \emph{restricted endomorphism operad} ${\rm End}_{P}({\bf X})$ as follows:
\begin{equation}
\label{restriction}
{\rm End}_{P}({\bf X})(c_1,\dots, c_n;c)=\left\{
\begin{array}{l}
0 \mbox{ if $P(c_1,\ldots, c_n;c)=0$,} \\[0.2cm]
{\rm End}({\bf X})(c_1,\ldots, c_n;c) \mbox{ otherwise.}
\end{array}
\right.
\end{equation}
Thus, there is a canonical inclusion of $C$\nobreakdash-coloured operads
\[
{\rm End}_{P}({\bf X})\longrightarrow {\rm End}({\bf X}),
\]
for which the following holds:

\begin{prop}
If $P$ is a $C$\nobreakdash-coloured operad in $\E$ and ${\bf X}=(X(c))_{c\in C}$ is an object of $\mathcal{E}^C$,
then every morphism $P\longrightarrow {\rm End}({\bf X})$ of $C$\nobreakdash-coloured operads
factors uniquely through ${\rm End}_{P}({\bf X})$.
$\hfill\qed$
\end{prop}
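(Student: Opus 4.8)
The plan is to check the factorization colourwise on each $(n+1)$-tuple $(c_1,\ldots,c_n;c)$ and then observe that the collection-level factorization automatically respects the operad structure. First I would fix a morphism $\varphi\colon P\longrightarrow \End({\bf X})$ of $C$-coloured operads, and examine its component
\[
\varphi(c_1,\ldots,c_n;c)\colon P(c_1,\ldots,c_n;c)\longrightarrow \End({\bf X})(c_1,\ldots,c_n;c).
\]
If $P(c_1,\ldots,c_n;c)=0$, then since $0$ is initial in $\E$, this component is the unique map $0\longrightarrow \End({\bf X})(c_1,\ldots,c_n;c)$; as $\End_P({\bf X})(c_1,\ldots,c_n;c)=0$ in this case, $\varphi(c_1,\ldots,c_n;c)$ factors uniquely through it. If $P(c_1,\ldots,c_n;c)\ne 0$, then $\End_P({\bf X})(c_1,\ldots,c_n;c)=\End({\bf X})(c_1,\ldots,c_n;c)$ and the canonical inclusion is the identity on this component, so the factorization is again unique and trivial. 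Thus the components of $\varphi$ assemble, in a manifestly unique way, into a morphism of $C$-coloured collections $\bar\varphi\colon P\longrightarrow \End_P({\bf X})$ with $\varphi$ equal to the composite of $\bar\varphi$ with the canonical inclusion.

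Next I would verify that $\bar\varphi$ is in fact a morphism of $C$-coloured operads, i.e.\ that it is compatible with the unit maps and the composition products. For the units, the map $I\longrightarrow P(c;c)$ is sent by $\varphi$ to the unit of $\End({\bf X})$; in particular $P(c;c)\ne 0$ since it receives a map from $I$ and $\E$ is such that $I$ need not be initial in the cases of interest --- more precisely, the image of the unit shows $P(c;c)$ is nonzero whenever the relevant hom-object of $\End({\bf X})$ is nonzero, but in any case the definition of $\End_P({\bf X})(c;c)$ forces the unit of $P$ to land there, since either $P(c;c)=0$ (and then there is no unit issue) or $\End_P({\bf X})(c;c)=\End({\bf X})(c;c)$. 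For the composition products, the point is that if
\[
P(c_1,\ldots,c_n;c)\otimes P(a_{1,1},\ldots;c_1)\otimes\cdots\otimes P(a_{n,1},\ldots;c_n)\longrightarrow P(a_{1,1},\ldots;c)
\]
has nonzero source, then in particular $P(a_{1,1},\ldots,a_{n,k_n};c)\ne 0$ (the composition map together with the axioms, or simply tracking through, shows the target is hit from a nonzero object; more elementarily, if any tensor factor were $0$ then so would be the tensor product, by the fact that $\otimes$ preserves colimits in each variable and $0$ is the empty colimit), so $\End_P({\bf X})$ agrees with $\End({\bf X})$ on the target, and the composition-compatibility diagram for $\bar\varphi$ is obtained from that for $\varphi$ by replacing certain copies of $\End({\bf X})(\ldots)$ by the equal objects $\End_P({\bf X})(\ldots)$; when some source tensor factor is $0$, both composite maps out of it are the unique map from $0$ and the diagram commutes trivially.

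Finally, uniqueness of the operad-morphism factorization follows from the uniqueness already established at the level of collections, together with the remark in the excerpt that the forgetful functor $\Oper_C(\E)\longrightarrow {\rm Coll}_C(\E)$ is faithful (indeed it reflects isomorphisms): any operad morphism $\psi\colon P\longrightarrow \End_P({\bf X})$ composing with the canonical inclusion to give $\varphi$ has, underlying it, a collection morphism with the same property, hence equals $\bar\varphi$ as a collection morphism, hence equals $\bar\varphi$ as an operad morphism. The main obstacle I anticipate is purely bookkeeping: making sure that in the composition-product diagrams the vanishing or non-vanishing of the relevant hom-objects of $\End_P({\bf X})$ lines up correctly with the vanishing of the tensor factors of $P$, which reduces to the elementary fact that in a closed symmetric monoidal category $\otimes$ preserves the initial object in each variable, so a tensor product of objects of $P$ is $0$ as soon as one factor is. No deep ingredient is needed; it is a diagram chase organized by cases.
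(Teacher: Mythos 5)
Your overall strategy (factor componentwise, then check that the factored collection morphism respects units and compositions, then get uniqueness from faithfulness of the forgetful functor) is the evident one --- the paper gives no proof at all --- and the collection-level factorization, the unit check, and the uniqueness argument are fine. The gap is in the composition check. You claim that if the source of the composition map of $P$ is nonzero, then $P(a_{1,1},\ldots,a_{n,k_n};c)\neq 0$, and your justification is that ``the target is hit from a nonzero object.'' That inference is valid only when $0$ is a \emph{strict} initial object (as in simplicial sets or $k$\nobreakdash-spaces, where a map into $\emptyset$ forces the source to be empty); in the stated generality of a cocomplete closed symmetric monoidal $\E$ it fails, and the paper itself applies these constructions with $\E=\Ab$ (Subsection 3.1), where every object maps to $0$. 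Indeed the implication itself is false there: take colours $\{a,b,c\}$ and set $P(b;c)=P(a;b)=\Z$, $P(x;x)=\Z$ for all $x$, $P(a;c)=0$, all other entries $0$, with the composition $P(b;c)\otimes P(a;b)\longrightarrow P(a;c)$ the zero map; the operad axioms hold, yet all source factors are nonzero while the target entry vanishes. (Your parenthetical remark that a tensor product of nonzero objects is nonzero is likewise false in $\Ab$, e.g.\ $\Z/2\otimes\Z/3=0$, though you only needed the converse direction, which is correct.) So your case analysis misses the case ``all source entries of $P$ nonzero but the composed entry zero,'' and there your square compares two maps into ${\rm End}_P({\bf X})(a_{1,1},\ldots,a_{n,k_n};c)=0$, whose equality does not follow from anything you wrote.

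The repair is short but must be made, and it splits according to the nature of $0$. If $0$ is strict initial (the simplicial and topological settings in which the coloured operads of this paper live), the missing case genuinely cannot occur --- not because the target ``is hit from a nonzero object,'' but because the composition map of $P$ would then be a map from a nonempty object to the empty one. If instead $0$ is a zero object (as in $\Ab$, or in pointed settings such as spectra), the missing case can occur, but any two maps into $0$ coincide, so the compatibility square commutes trivially, the composition of ${\rm End}_P({\bf X})$ into such an entry being forced to be the map to $0$. This dichotomy is also what underlies the paper's unproved assertion that ${\rm End}_P({\bf X})\longrightarrow{\rm End}({\bf X})$ is a canonical inclusion of coloured operads; as written, your argument silently assumes the favourable cartesian case.
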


Hence, a $P$\nobreakdash-algebra structure on ${\bf X}$ is precisely given by a morphism of $C$\nobreakdash-coloured
operads $P\longrightarrow {\rm End}_{P}({\bf X})$.
The same holds for non\nobreakdash-symmetric coloured operads,
by replacing endomorphism coloured operads by their non\nobreakdash-symmetric version.

Similarly, if $\bf X$ and $\bf Y$ are objects of ${\E}^C$ and $P$ is a
$C$\nobreakdash-coloured operad, we denote by ${\rm Hom}_{P}({\bf X}, {\bf Y})$
the $C$\nobreakdash-coloured collection defined as
\[
{\rm Hom}_{P}({\bf X},{\bf Y})(c_1,\dots, c_n;c)=\left\{
\begin{array}{l}
0 \mbox{ if $P(c_1,\ldots,c_n;c)=0$,} \\[0.2cm]
{\rm Hom}({\bf X},{\bf Y})(c_1,\ldots, c_n;c) \mbox{ otherwise,}
\end{array}
\right.
\]
and, for a morphism ${\bf f}\colon{\bf X}\longrightarrow {\bf Y}$, we denote by
${\rm End}_{P}({\bf f})$ the pullback of the restricted
endomorphism operads of ${\bf X}$ and ${\bf Y}$ over
${\rm Hom}_{P}({\bf X}, {\bf Y})$, as in (\ref{endocolor}).

\section{Ideals and restriction of colours}
\label{examples}
The following concepts will be useful in our discussion of localization of modules over monoids
and their maps. If $\bf X$ is an algebra over a $C$\nobreakdash-coloured operad $P$, we will need to carry out
certain constructions on some components $X(c)$, but not on others.
For this reason, we give a name to special subsets of the set of colours (depending on~$P$).
Examples will be given later in this section.

\begin{defn}
If $P$ is a $C$\nobreakdash-coloured operad, a subset $J\subseteq C$ is called an \emph{ideal} relative to $P$ if
$P(c_1,\ldots,c_n;c)=0$ whenever $n\ge 1$, $c\in J$, and $c_i\not\in J$ for some $i\in\{1,\ldots,n\}$.
\label{ideal}
\end{defn}

If $\alpha\colon C\longrightarrow D$ is a function between sets of colours, then the
following pair of adjoint functors was discussed in~\cite[\S 1.6]{BM07}:
\begin{equation}
\label{coloradjoint1}
\alpha_{!} : {\rm Oper}_C(\E)\rightleftarrows {\rm Oper}_D(\E) : \alpha^*,
\end{equation}
where the restriction functor $\alpha^*$ is defined as
\[
(\alpha^* P)(c_1,\ldots,c_n;c)=P(\alpha(c_1),\ldots, \alpha(c_n); \alpha(c)).
\]
If $\alpha$ is injective (which is indeed the case in all our applications),
then the left adjoint $\alpha_{!}$ can be made explicit as follows:
\begin{equation}
\label{DCleftadjoint}
(\alpha_! Q)(d_1,\ldots,d_n; d)=
\left\{
\begin{array}{l}
\mbox{$Q(c_1,\ldots, c_n; c)$ if $\alpha(c_i)=d_i$ for all $i$ and $\alpha(c)=d$,} \\[0.1cm]
\mbox{$I$ if $n=1$, $d_1=d$, and $d\not\in \alpha(C)$,} \\[0.1cm]
\mbox{$0$ otherwise.}
\end{array}
\right.
\end{equation}

A function $\alpha\colon C\longrightarrow D$ also defines an adjoint pair in the
corresponding categories of algebras:
\begin{equation}
\label{coloradjoint2}
\alpha_{!} : {\rm Alg}_{\alpha^*P}(\E)\rightleftarrows {\rm Alg}_P(\E) : \alpha^*
\end{equation}
for every $P\in {\rm Oper}_D(\E)$, where $\alpha^*$ is defined as follows. If ${\bf X}$ is
a $P$\nobreakdash-algebra given by a structure morphism $\gamma\colon P\longrightarrow \End({\bf X})$, then
\[
(\alpha^*{\bf X})(c)=X(\alpha(c))
\]
for all $c\in C$, with a structure morphism defined by means of (\ref{coloradjoint1}),
\begin{equation}
\label{alphastar}
\alpha^*\gamma\colon \alpha^*P\longrightarrow \alpha^*\End({\bf X})=\End(\alpha^*{\bf X}).
\end{equation}

The following examples are illustrative.

\subsection{Modules over operad algebras}
\label{mulmod}
Let $P$ be a (one\nobreakdash-coloured) operad in $\mathcal{E}$ and let $\Mod_P$ be a coloured operad with two colours
$C=\{r,m\}$, for which the only nonzero terms are
\[
\Mod_P(r,\stackrel{(n)}{\ldots}, r;r)=P(n)
\]
for $n\ge 0$ and
\[
\Mod_P(c_1,\ldots, c_n;m)=P(n)
\]
for $n\ge 1$ when exactly one $c_i$ is $m$ and the rest (if any) are equal to $r$. Then an algebra over $\Mod_P$
is a pair $(R, M)$ of objects of $\E$
where $R$ is a $P$\nobreakdash-algebra and $M$ is a module over~$R$, i.e., an object equipped with a family of maps
\[
P(n) \otimes R \otimes {\stackrel{(k-1)}{\cdots}} \otimes R \otimes M \otimes R \otimes {\stackrel{(n-k)}{\cdots}}
\otimes R \longrightarrow M
\]
for $n\ge 1$ and $1\le k\le n$, equivariant and compatible with associativity and with the unit of $P$.

If $P=\Ass$, then an algebra over $\Mod_P$ is a pair $(R,M)$ where $R$ is a monoid in~$\E$
and $M$ is an $R$\nobreakdash-bimodule, that is, an object equipped with a right $R$\nobreakdash-action and a left $R$\nobreakdash-action
that commute with each other.
If $P=\Com$, then the corresponding object $R$ is a commutative monoid in $\E$ and $M$ is a module
over it (indistinctly left or right).

The ideals relative to $\Mod_P$ are $C$, $\{r\}$, and $\emptyset$ for all $P$. Note also that, if $\alpha$ denotes
the inclusion of $\{r\}$ into $\{r,m\}$, then
\[
\alpha^*\Mod_P = P
\]
for each operad $P$, and $\alpha^*(R,M)=R$ on the corresponding algebras.

As in \cite{BM07}, we note that there are non\nobreakdash-symmetric coloured operads yielding the notions
of left module and right module. For a (non\nobreakdash-symmetric, one\nobreakdash-coloured) operad $P$,
let $\LMod_P$ be the non\nobreakdash-symmetric $C$\nobreakdash-coloured operad with \hbox{$C=\{r,m\}$} defined by
\[
\LMod_P(r,\stackrel{(n)}{\ldots}, r;r)=P(n), \qquad \LMod_P(r,\stackrel{(n)}{\ldots}, r,m;m)=P(n+1)
\]
for $n\ge 0$, and zero otherwise. Similarly, consider a non\nobreakdash-symmetric coloured operad $\RMod_P$
with two colours $\{s,m\}$ defined by
\[
\RMod_P(s,\stackrel{(n)}{\ldots}, s;s)=P(n), \qquad \RMod_P(m,s,\stackrel{(n)}{\ldots}, s;m)=P(n+1),
\]
for $n\ge 0$, and zero otherwise. If $P=\Ass$ (as a non\nobreakdash-symmetric operad), then
the algebras over $\LMod_P$ are pairs $(R, M)$ of objects of $\E$ where $R$
is a monoid and $M$ supports a left action of $R$, and similarly for $\RMod_P$.

In order to handle $R$\nobreakdash-$S$\nobreakdash-bimodules,
we consider a non\nobreakdash-symmetric coloured operad $\BMod_P$ with three colours $\{r,s,m\}$ and such that
\begin{gather}\notag
\BMod_P(r,\stackrel{(n)}{\ldots}, r;r)=P(n), \qquad \BMod_P(s,\stackrel{(n)}{\ldots}, s;s)=P(n), \\ \notag
\BMod_P(r,\stackrel{(n_1)}{\ldots}, r,m,s,\stackrel{(n_2)}{\ldots}, s;m)=P(n_1+n_2+1),
\end{gather}
if $n,n_1,n_2\ge 0$, and zero otherwise.
The ideals relative to $\BMod_P$ are $C$, $\{r,s\}$, $\{r\}$, $\{s\}$, and $\emptyset$.
Those relative to $\LMod_P$ are $C$, $\{r\}$, and $\emptyset$, and similarly for~$\RMod_P$.

\subsection{Maps of algebras over coloured operads}
\label{mulmoralg}
Let $C$ be any set and $P$ a $C$\nobreakdash-coloured operad.
Let $D=\{0,1\}\times C$ and define a $D$\nobreakdash-coloured operad $\Mor_P$ by
\[
\Mor_P((i_1,c_1),\ldots,(i_n,c_n);(i,c))=\left\{
\begin{array}{l} \mbox{$0$ if $i=0$ and $i_k=1$ for some $k$,} \\[0.2cm]
\mbox{$P(c_1,\ldots, c_n;c)$ otherwise.}
\end{array}
\right.
\]
If ${\bf X}$ is an algebra over $\Mor_P$, then both ${\bf X}_0=(X(0,c))_{c\in C}$ and
${\bf X}_1=(X(1,c))_{c\in C}$ acquire a $P$\nobreakdash-algebra structure by restriction of colours,
since, if $\alpha_i\colon C\longrightarrow D$ denotes the inclusion $\alpha_i(c)=(i,c)$ for $i=0$
and $i=1$, then
\begin{equation}
\label{rhois}
(\alpha_i)^*\Mor_P= P \quad \mbox{and} \quad (\alpha_i)^*{\bf X}={\bf X}_i.
\end{equation}
Furthermore, the $\Mor_P$\nobreakdash-algebra structure on $\bf X$ gives rise to a
map of $P$\nobreakdash-algebras ${\bf f}\colon {\bf X}_0\longrightarrow {\bf X}_1$ as follows.
For each $c\in C$, there is a map $f_c\colon X(0,c)\longrightarrow X(1,c)$ defined as the composite
\begin{equation}
\label{themap}
X(0,c)\longrightarrow \Mor_P((0,c);(1,c))\otimes X(0,c)\longrightarrow X(1,c),
\end{equation}
where the first map is obtained by tensoring the unit $u_c\colon I\longrightarrow P(c;c)$ with $X(0,c)$.

Conversely, given two $P$\nobreakdash-algebras ${\bf X}_0$, ${\bf X}_1$ and
a map of $P$\nobreakdash-algebras ${\bf f}\colon{\bf X}_0\longrightarrow{\bf X}_1$, there is a unique
$\Mor_P$\nobreakdash-algebra structure on ${\bf X}=\left(X_0(c),X_1(c)\right)_{c\in C}$ extending the given $P$\nobreakdash-algebra structures
and for which the distinguished map defined by (\ref{themap}) is the given map $\bf f$.

For example, an algebra ${\bf X}$ over $\Mor_{\Ass}$
is determined by two monoids $X(0)$ and $X(1)$ together with a morphism of monoids $f\colon X(0)\longrightarrow X(1)$.
If $P$ is any one\nobreakdash-coloured operad, then we can write $D=\{0,1\}$, hence recovering \cite[1.5.3]{BM07};
cf.\ also \cite[\S 2]{Mar04}. In this case, the ideals relative to $\Mor_{P}$ are $D$, $\{0\}$, and $\emptyset$.

If $Q=\Mod_P$, as in Subsection~\ref{mulmod}, where $P$ is one\nobreakdash-coloured, then $\Mor_Q$ is a $D$\nobreakdash-coloured operad with $D=\{0,1\}\times\{r,m\}$.
An algebra ${\bf X}$ over $\Mor_Q$ is uniquely determined by two $P$\nobreakdash-algebras $A=X(0,r)$ and $B=X(1,r)$, an $A$\nobreakdash-module \hbox{$M=X(0,m)$},
a $B$\nobreakdash-module $N=X(1,m)$, a map of $P$\nobreakdash-algebras $A\longrightarrow B$, and a map of $A$\nobreakdash-modules $M\longrightarrow N$,
where the $A$\nobreakdash-module structure on $N$ is defined by means of the map $A\longrightarrow B$.
The ideals relative to $\Mor_Q$ are the following subsets:
$D$, $\{(0,r),(0,m),(1,r)\}$, $\{(0,r),(0,m)\}$, $\{(0,r),(1,r)\}$, $\{(0,r)\}$, and $\emptyset$.

\section{Preservation of algebras under localizations}
\label{preservation}
In this section, we study the effect of localizations on structures defined as algebras over coloured operads
in closed symmetric monoidal categories, and describe several special cases.
First, we recall some generalities about localizations.

\label{secloc}
Let $\C$ be any category. A \emph{coaugmented functor}
on $\C$ is a functor $L\colon\C\longrightarrow\C$ together with a natural transformation
$\eta\colon {\rm Id}_{\C}\longrightarrow L$. (This is called a \emph{pointed endofunctor} in other contexts.)
A coaugmented functor $(L,\eta)$ is \emph{idempotent} if $\eta_{LX}=L\eta_X$
and $L\eta_X\colon LX\longrightarrow LLX$ is an isomorphism for every object $X$ in $\C$. Idempotent
coaugmented functors are called \emph{localizations}.

If $(L,\eta)$ is a localization, then the objects isomorphic to $LX$ for some $X$ are called \emph{$L$\nobreakdash-local objects}
and the morphisms $f\colon X\longrightarrow Y$ such that $Lf\colon LX\longrightarrow LY$ is an isomorphism are
called \emph{$L$\nobreakdash-equivalences}. Localizations are characterized by each of two universal properties:
\begin{itemize}
\item[(i)] $\eta_X\colon X\longrightarrow LX$ is initial among morphisms from $X$ to $L$\nobreakdash-local objects;
\item[(ii)] $\eta_X\colon X\longrightarrow LX$ is terminal among $L$\nobreakdash-equivalences with domain $X$.
\end{itemize}
These universal properties ensure that if $f\colon X\longrightarrow Y$ is an $L$\nobreakdash-equivalence and $Y$ is
$L$\nobreakdash-local, then $Y\cong LX$. In fact, the classes of $L$\nobreakdash-local objects and $L$\nobreakdash-equivalences determine each other
by an orthogonality relation. A morphism $f\colon X\longrightarrow Y$ and an object $Z$ in $\C$ are called \emph{orthogonal}
if the induced map
\begin{equation}
\label{orthogonal}
\C(f,Z)\colon\C(Y,Z)\longrightarrow\C(X,Z)
\end{equation}
is a bijection. Using this terminology, a map is an $L$\nobreakdash-equivalence if and only if it is orthogonal to all $L$\nobreakdash-local objects,
and an object is $L$\nobreakdash-local if and only if it is orthogonal to all $L$\nobreakdash-equivalences.
Examples of localization functors on the homotopy
category of spaces or spectra are localization at primes, homological localizations, and, more generally,
$f$\nobreakdash-localizations in the sense of \cite{Far96}.

Here it is convenient to introduce the following convention about extending coaugmented functors from a
category $\E$ to the product category $\E^C$, where $C$ is a set of colours. In some of our
results, it will be necessary to localize a subset of components of an algebra over a $C$\nobreakdash-coloured operad,
but not the rest (for example, we may want to localize an $R$\nobreakdash-module, but not the monoid $R$).
Thus, for a coaugmented functor $(L,\eta)$ on~$\E$, we define partial extensions over $\E^C$ as follows:

\begin{defn}
\label{extensions}
The \textit{extension of $(L,\eta)$ over $\E^C$ away from a subset $J\subseteq C$}
is the coaugmented functor on $\E^C$ ---which we keep denoting by $(L,\eta)$ if no confusion can arise---
given by $L{\bf X}=(L_c X(c))_{c\in C}$ where $L_c$ is the identity functor if $c\in J$ and $L_c=L$ if $c\not\in J$.
Correspondingly, $\eta_{{\bf X}}\colon {\bf X}\longrightarrow L{\bf X}$ is defined by declaring that
$(\eta_{{\bf X}})_c$ is the identity map if $c\in J$ and $(\eta_{{\bf X}})_c=\eta_{X(c)}$ if $c\not\in J$.
\end{defn}

\begin{lem}
Let $\E$ be a closed symmetric monoidal category, $P$ a $C$\nobreakdash-coloured operad in $\E$,
and ${\bf X}$ a $P$\nobreakdash-algebra. Let $(L,\eta)$ be any extension over $\E^C$ of a coaugmented functor on $\E$.
Suppose that the morphism of $C$\nobreakdash-coloured collections
\[
{\rm End}_{P}(L{\bf X})\longrightarrow {\rm Hom}_{P}({\bf X}, L{\bf X})
\]
induced by $\eta_{{\bf X}}$ is an isomorphism.
Then $L{\bf X}$ has a unique $P$\nobreakdash-algebra structure such that $\eta_{{\bf X}}$ is a map of $P$\nobreakdash-algebras.
\label{lem01}
\end{lem}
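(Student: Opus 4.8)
The plan is to use the pullback description of $\End({\bf f})$ from diagram~(\ref{endocolor}) — in its restricted form $\End_P({\bf f})$ over ${\rm Hom}_P({\bf X},{\bf Y})$ — applied to the localization map ${\bf f}=\eta_{{\bf X}}\colon{\bf X}\longrightarrow L{\bf X}$. Recall from the discussion following Proposition~\ref{restriction} that a $P$\nobreakdash-algebra structure on ${\bf X}$ together with a $P$\nobreakdash-algebra structure on $L{\bf X}$ making $\eta_{{\bf X}}$ a map of $P$\nobreakdash-algebras is the same thing as a morphism of $C$\nobreakdash-coloured operads $P\longrightarrow\End_P(\eta_{{\bf X}})$, and that $\End_P(\eta_{{\bf X}})$ sits in the pullback square whose other three corners are $\End_P({\bf X})$, $\End_P(L{\bf X})$, and ${\rm Hom}_P({\bf X},L{\bf X})$. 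The hypothesis says precisely that the right-hand vertical map $\End_P(L{\bf X})\longrightarrow{\rm Hom}_P({\bf X},L{\bf X})$ of that square is an isomorphism of $C$\nobreakdash-coloured collections.

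First I would observe that in any pullback square the pullback of an isomorphism is an isomorphism, so the map $\End_P(\eta_{{\bf X}})\longrightarrow\End_P({\bf X})$ is an isomorphism of $C$\nobreakdash-coloured collections. Next I would upgrade this to operads: $\End_P(\eta_{{\bf X}})$ and $\End_P({\bf X})$ are both $C$\nobreakdash-coloured operads (the former by the pullback construction recalled from \cite[Theorem~3.5]{BM03}, applied in the restricted version), and the comparison map is a morphism of operads. Since the forgetful functor ${\rm Oper}_C(\E)\longrightarrow{\rm Coll}_C(\E)$ reflects isomorphisms (noted explicitly in the excerpt), the map $\End_P(\eta_{{\bf X}})\longrightarrow\End_P({\bf X})$ is an isomorphism of $C$\nobreakdash-coloured operads.

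Now I would construct the desired structure. The given $P$\nobreakdash-algebra structure on ${\bf X}$ is a morphism of operads $\gamma\colon P\longrightarrow\End_P({\bf X})$; composing with the inverse operad isomorphism yields $P\longrightarrow\End_P(\eta_{{\bf X}})$, which by the correspondence above equips $L{\bf X}$ with a $P$\nobreakdash-algebra structure making $\eta_{{\bf X}}$ a map of $P$\nobreakdash-algebras. For uniqueness, suppose $L{\bf X}$ carries any $P$\nobreakdash-algebra structure for which $\eta_{{\bf X}}$ is a map of $P$\nobreakdash-algebras; this is classified by a morphism $P\longrightarrow\End_P(\eta_{{\bf X}})$ whose composite with $\End_P(\eta_{{\bf X}})\longrightarrow\End_P({\bf X})$ must be $\gamma$ (since the underlying $P$\nobreakdash-algebra structure on ${\bf X}$ is the fixed one). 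Because that latter map is an isomorphism, the morphism $P\longrightarrow\End_P(\eta_{{\bf X}})$ is forced to equal the one we built, and hence so is the induced structure on $L{\bf X}$ (the structure on $L{\bf X}$ is recovered from $P\longrightarrow\End_P(\eta_{{\bf X}})$ via the projection to $\End_P(L{\bf X})$).

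The only subtle point — and the step I would be most careful about — is making sure the pullback $\End_P(\eta_{{\bf X}})$ really is formed in $C$\nobreakdash-coloured collections \emph{and} carries a compatible operad structure with the two projections being operad morphisms; this is exactly the content invoked from \cite[Theorem~3.5]{BM03}, transported to the restricted setting using Proposition~\ref{restriction}, so no new calculation is needed. Everything else is formal: pullbacks preserve isomorphisms, and a functor reflecting isomorphisms does the rest. I do not expect any genuine obstacle beyond bookkeeping with the restricted endomorphism operads.
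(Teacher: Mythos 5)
Your proposal is correct and follows essentially the same route as the paper's own proof: identify the hypothesis with the right-hand vertical map of the restricted pullback square, deduce that ${\rm End}_{P}(\eta_{\bf X})\longrightarrow{\rm End}_{P}({\bf X})$ is an isomorphism (of collections, hence of operads), compose its inverse with $\gamma$ to build the structure, and get uniqueness from the pullback's universal property. No gaps; your explicit appeal to the forgetful functor reflecting isomorphisms is exactly the remark the paper relies on implicitly.
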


\begin{proof}
The assumption made implies that the pullback morphism
\[
{\rm End}_{P}(\eta_{\bf X})\longrightarrow {\rm End}_{P}({\bf X})
\]
is an isomorphism of $C$\nobreakdash-coloured collections, and therefore it is an isomorphism of $C$\nobreakdash-coloured operads.
Composing the inverse of this isomorphism
with the morphism $\gamma\colon P\longrightarrow {\rm End}_{P}({\bf X})$ that endows ${\bf X}$
with its $P$\nobreakdash-algebra structure yields a morphism
$P\longrightarrow {\rm End}_{P}(L{\bf X})$ as depicted in the diagram
\[
\xymatrix{
 & {\rm End}_{P}(\eta_{\bf X}) \ar[r]\ar[d] & {\rm End}_{P}(L{\bf X})\ar[d] \\
 P\ar[r]^-{\gamma} & {\rm End}_{P}(\bf X) \ar@/^/@{.>}[u] \ar[r] & {\rm Hom}_{P}({\bf X}, L{\bf X}).
}
\]
In this way, $L{\bf X}$ acquires a $P$\nobreakdash-algebra structure. The fact that this $P$\nobreakdash-algebra structure
morphism factors through ${\rm End}_{P}(\eta_{\bf X})$ implies precisely
that $\eta_{\bf X}$ is a map of $P$\nobreakdash-algebras. Furthermore, the $P$\nobreakdash-algebra structure on
$L{\bf X}$ is unique with this property, by the universal property of the pullback.
\end{proof}

Our main source of applications of this result corresponds to the
situation where $(L,\eta)$ is of a special kind. We will ask it to satisfy
an orthogonality condition that is stronger than~(\ref{orthogonal}),
but nonetheless holds in our examples in this section.

\begin{defn}
\label{closed}
We say that a localization $(L,\eta)$ on a closed symmetric monoidal
category $\E$ is \emph{closed} if, for
every $L$\nobreakdash-equivalence $f\colon X\longrightarrow Y$ and
every $L$\nobreakdash-local object~$Z$, the map
\[
\Hom\nolimits_{\E}(f,Z)\colon\Hom\nolimits_{\E}(Y,Z)\longrightarrow \Hom\nolimits_{\E}(X,Z)
\]
is an isomorphism in $\E$.
\end{defn}

For such a functor $L$, if $f_1\colon X_1\longrightarrow Y_1$
and $f_2\colon X_2\longrightarrow Y_2$ are $L$\nobreakdash-equivalences, then the tensor product $f_1\otimes f_2$ is again an
$L$\nobreakdash-equivalence, since, by the hom\nobreakdash-tensor adjunction,
\begin{multline}
\label{shift}
\Hom\nolimits_{\E}(Y_1\otimes Y_2, Z)\cong \Hom\nolimits_{\E}(Y_1, \Hom\nolimits_{\E}(Y_2, Z))
\\ \cong\Hom\nolimits_{\E}(Y_1, \Hom\nolimits_{\E}(X_2, Z))
\cong \Hom\nolimits_{\E}(Y_1\otimes X_2, Z)
\end{multline}
for every $L$\nobreakdash-local object $Z$, and similarly in order to replace $Y_1$ by~$X_1$.

In the rest of this section, we will only consider closed localizations.
The following theorem states that the assumptions of Lemma \ref{lem01} hold for these functors.

\begin{thm}
Let $\mathcal{E}$ be a closed symmetric monoidal category, $P$ a $C$\nobreakdash-coloured operad in $\E$,
and $(L,\eta)$ the extension over $\E^C$ of a closed localization away from an ideal $J\subseteq C$
relative to~$P$.
If $\bf X$ is a $P$\nobreakdash-algebra, then $L{\bf X}$ has a unique $P$\nobreakdash-algebra structure such that
$\eta_{{\bf X}}$ is a map of $P$\nobreakdash-algebras.
\label{thmideal}
\end{thm}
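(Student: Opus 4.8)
The strategy is to verify the hypothesis of Lemma~\ref{lem01}: namely, that the morphism of $C$\nobreakdash-coloured collections
$\End_{P}(L{\bf X})\longrightarrow \Hom_{P}({\bf X}, L{\bf X})$
induced by $\eta_{{\bf X}}$ is an isomorphism. Once this is established, the conclusion follows immediately from the lemma. So the whole content of the theorem is a termwise (i.e.\ arity\nobreakdash-by\nobreakdash-arity, colour\nobreakdash-tuple\nobreakdash-by\nobreakdash-colour\nobreakdash-tuple) check in~$\E$, and the ideal hypothesis on $J$ is exactly what makes that check go through.

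First I would fix an $(n+1)$\nobreakdash-tuple $(c_1,\dots,c_n;c)$ and split into cases according to whether $P(c_1,\dots,c_n;c)=0$. If it is $0$, then by the definition~(\ref{restriction}) of the restricted endomorphism operad both sides of the map are $0$, so the map is trivially an isomorphism. If $P(c_1,\dots,c_n;c)\neq 0$, then I must show that
\[
\Hom\nolimits_{\E}\bigl(LX(c_1)\otimes\cdots\otimes LX(c_n),\, LX(c)\bigr)
\longrightarrow
\Hom\nolimits_{\E}\bigl(X(c_1)\otimes\cdots\otimes X(c_n),\, LX(c)\bigr)
\]
is an isomorphism. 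The target here is $LX(c)$, which is $L$\nobreakdash-local (or equal to $X(c)$, which by Definition~\ref{extensions} can only happen if $c\in J$; I will return to this). Assuming $LX(c)$ is $L$\nobreakdash-local, the map is obtained by precomposing with $\eta_{X(c_1)}\otimes\cdots\otimes\eta_{X(c_n)}$, so it suffices to know that this tensor product of localization maps is an $L$\nobreakdash-equivalence and then invoke the closedness of $L$ (Definition~\ref{closed}). Each $\eta_{X(c_i)}$ is, by construction of the extension, either the identity or the localization map $X(c_i)\to LX(c_i)$; in either case it is an $L$\nobreakdash-equivalence, and by the argument around~(\ref{shift}) the tensor product of finitely many $L$\nobreakdash-equivalences is an $L$\nobreakdash-equivalence. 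Hence $\Hom_{\E}(-,LX(c))$ sends it to an isomorphism, which is precisely what we want.

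The one place where care is genuinely needed --- and what I expect to be the main obstacle --- is the case $c\in J$, where the target $X(c)=LX(c)$ need not be $L$\nobreakdash-local, so the closedness argument does not apply directly. This is exactly where the hypothesis that $J$ is an \emph{ideal relative to~$P$} enters. If $c\in J$ and $P(c_1,\dots,c_n;c)\neq 0$, then for $n\geq 1$ the ideal condition (Definition~\ref{ideal}) forces $c_i\in J$ for every $i$, so that $\eta_{X(c_i)}$ is the identity for all $i$, and the map in question is the identity on $\Hom_{\E}(X(c_1)\otimes\cdots\otimes X(c_n), X(c))$; for $n=0$ the map is the identity on $\Hom_{\E}(I, X(c))$ as well. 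Thus in all cases with $c\in J$ the map is an isomorphism trivially, and in all cases with $c\notin J$ it is an isomorphism by closedness, so the hypothesis of Lemma~\ref{lem01} holds and the theorem follows. It remains only to note that the resulting bijections are natural in the variables and compatible with the $\Sigma_n$\nobreakdash-actions, so that they assemble into an isomorphism of $C$\nobreakdash-coloured collections; this is routine since all the maps involved are induced by the single morphism~$\eta_{{\bf X}}$.
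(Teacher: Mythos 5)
Your proposal is correct and follows essentially the same route as the paper: it reduces the theorem to the hypothesis of Lemma~\ref{lem01} and checks the required isomorphism tuple by tuple, using the ideal condition to force all $L_{c_i}$ to be identities when $c\in J$, and closedness together with the fact that a tensor product of $L$\nobreakdash-equivalences is an $L$\nobreakdash-equivalence when $c\notin J$. The only difference is that you spell out explicitly the cases $P(c_1,\ldots,c_n;c)=0$ and $n=0$, which the paper leaves implicit in the definition of the restricted collections.
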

\begin{proof}
By Lemma \ref{lem01}, it is enough to show that the
morphism of $C$\nobreakdash-coloured collections
\[
{\rm End}_{P}(L{\bf X})\longrightarrow {\rm Hom}_{P}({\bf X}, L{\bf X})
\]
induced by $\eta_{{\bf X}}$ is an isomorphism. Since $J$ is an ideal, we need only consider the values of these collections
on tuples $(c_1,\ldots, c_n;c)$ for which $c\in J$ and $c_i\in J$ for all~$i$, or $c\not\in J$.
In the first case, the map
\[
\xymatrix{
\Hom\nolimits_{\mathcal{E}}(L_{c_1}X(c_1)\otimes\cdots\otimes L_{c_n}X(c_n), L_cX(c))\ar[d] \\
\Hom\nolimits_{\mathcal{E}}(X(c_1)\otimes\cdots\otimes X(c_n), L_cX(c))
}
\]
is trivially an isomorphism because $L_c$ and all the $L_{c_i}$ are identity functors, according to Definition \ref{extensions}.
If $c\not\in J$, then $L_c=L$, and
the isomorphism follows from the fact that the tensor product of $L$\nobreakdash-equivalences is an $L$\nobreakdash-equivalence.
\end{proof}

Observe that Lemma \ref{lem01} and Theorem \ref{thmideal} remain true if the coloured operad $P$ is non\nobreakdash-symmetric.
At any moment, if necessary, we may replace $P$
by its symmetric version $\Sigma P$, since both yield the same class of algebras (see Remark \ref{nonsymmetric}).

\begin{cor}
Let $(L,\eta)$ be a closed localization on a closed symmetric monoidal category $\E$.
\begin{itemize}
\item[(i)] If $R$ is a monoid in $\E$, then $LR$ has a unique monoid structure such that $\eta_R\colon R\longrightarrow LR$
is a morphism of monoids. If $R$ is commutative, then $LR$ is also commutative.
\item[(ii)] If $f\colon R_1\longrightarrow R_2$ is a morphism of monoids in $\E$, then $Lf\colon LR_1\longrightarrow LR_2$
is also a morphism of monoids.
\item[(iii)] If $R$ is a monoid in $\E$ and $M$ is a left $R$\nobreakdash-module, then $LM$ has a unique left $R$\nobreakdash-module structure such that
$\eta_M\colon M\longrightarrow LM$ is a morphism of $R$\nobreakdash-modules. Moreover, $LM$ also has a unique left $LR$\nobreakdash-module structure
extending the $R$\nobreakdash-module structure. The same statements are true for right $R$\nobreakdash-modules.
\item[(iv)] If $R$ and $S$ are monoids in $\E$ and $M$ is an $R$\nobreakdash-$S$\nobreakdash-bimodule, then $LM$ has a unique $R$\nobreakdash-$S$\nobreakdash-bimodule structure
such that $\eta_M\colon M\longrightarrow LM$ is a morphism of $R$\nobreakdash-$S$\nobreakdash-bimodules. Moreover, $LM$ also has unique $R$\nobreakdash-$LS$\nobreakdash-bimodule,
$LR$\nobreakdash-$S$\nobreakdash-bimodule, and $LR$\nobreakdash-$LS$\nobreakdash-bimodule structures that extend the given \linebreak $R$\nobreakdash-$S$\nobreakdash-bimodule structure.
\item[(v)] If $f\colon M_1\longrightarrow M_2$ is a morphism of left $R$\nobreakdash-modules, where $R$ is a monoid in $\E$,
then $Lf\colon LM_1\longrightarrow LM_2$ is a morphism of left $R$\nobreakdash-modules and a morphism of left $LR$\nobreakdash-modules.
The analogous statements are true for right $R$\nobreakdash-modules and for $R$\nobreakdash-$S$\nobreakdash-bimodules, where $S$ is another monoid.
\end{itemize}
\label{cormodules}
\end{cor}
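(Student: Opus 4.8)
The plan is to realize each item of Corollary~\ref{cormodules} as a special case of Theorem~\ref{thmideal}, by choosing, in each case, the appropriate coloured operad $P$ and the appropriate ideal $J$ relative to $P$ away from which we extend the closed localization $(L,\eta)$. In every case the object whose structure we wish to transport becomes a $P$\nobreakdash-algebra (or a $\Mor_P$\nobreakdash-algebra for the statements about maps), and the subset of colours we do \emph{not} localize is checked against Definition~\ref{ideal} to be an ideal; Theorem~\ref{thmideal} then produces the desired $P$\nobreakdash-algebra structure on $L{\bf X}$ together with the uniqueness assertion, and the fact that $\eta_{\bf X}$ is a map of $P$\nobreakdash-algebras unwinds precisely to the statement that the localization map respects the structure in question.

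Concretely, I would proceed item by item. For (i), take $P=\Ass$ (the symmetric associative operad, or its non\nobreakdash-symmetric version as permitted by the remark after Theorem~\ref{thmideal}), with a single colour, so $J=\emptyset$ is vacuously an ideal; a monoid is exactly an $\Ass$\nobreakdash-algebra, and applying Theorem~\ref{thmideal} gives the monoid structure on $LR$ making $\eta_R$ multiplicative. Commutativity: if $R$ is an algebra over $\Com$, which is a quotient operad of $\Ass$, then the same argument with $P=\Com$ shows $LR$ is a commutative monoid, and since the $\Com$\nobreakdash-structure refines the $\Ass$\nobreakdash-structure and Theorem~\ref{thmideal} gives uniqueness, the commutative structure on $LR$ is the one already obtained. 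For (ii), use $\Mor_{\Ass}$ on the two colours $\{0,1\}$ as in Subsection~\ref{mulmoralg}: a morphism of monoids is a $\Mor_{\Ass}$\nobreakdash-algebra, $J=\emptyset$, and Theorem~\ref{thmideal} applied to ${\bf X}=(R_1,R_2)$ localizes both ends compatibly; since the extension away from $J=\emptyset$ is just $L$ on each component, the resulting map is $Lf\colon LR_1\longrightarrow LR_2$.

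For (iii), the relevant operad is $\LMod_{\Ass}$ (or $\RMod_{\Ass}$) with colours $\{r,m\}$, whose algebras are pairs $(R,M)$ with $R$ a monoid acting on the left on $M$. To keep $R$ fixed while localizing $M$, take $J=\{r\}$; by Subsection~\ref{mulmod} the ideals relative to $\LMod_{\Ass}$ are $C$, $\{r\}$, and $\emptyset$, so $J=\{r\}$ is legitimate, and Theorem~\ref{thmideal} yields the unique left $R$\nobreakdash-module structure on $LM$ with $\eta_M$ an $R$\nobreakdash-module map. To instead localize both $R$ and $M$, take $J=\emptyset$; then $LR$ inherits a monoid structure (compatibly with (i), by uniqueness) and $LM$ an $LR$\nobreakdash-module structure, and because the $R$\nobreakdash-module structure on $LM$ factors through $R\longrightarrow LR$ one reads off that the $LR$\nobreakdash-structure extends the $R$\nobreakdash-structure; here I would invoke the adjunction $(\alpha_!,\alpha^*)$ of~(\ref{coloradjoint2}) along the colour inclusion $\{r\}\hookrightarrow\{r,m\}$ to make the comparison of the two structures precise. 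Item (iv) is identical with $\BMod_{\Ass}$ on $\{r,s,m\}$, whose ideals are listed as $C,\{r,s\},\{r\},\{s\},\emptyset$ in Subsection~\ref{mulmod}: choosing $J$ equal to $\{r,s\}$, $\{s\}$, $\{r\}$, or $\emptyset$ produces respectively the $R$\nobreakdash-$S$\nobreakdash-, $R$\nobreakdash-$LS$\nobreakdash-, $LR$\nobreakdash-$S$\nobreakdash-, and $LR$\nobreakdash-$LS$\nobreakdash-bimodule structures, all compatible by uniqueness. Finally (v) combines (iii)--(iv) with the map construction of Subsection~\ref{mulmoralg}: use $\Mor_{\LMod_{\Ass}}$ (resp.\ $\Mor_{\BMod_{\Ass}}$), apply Theorem~\ref{thmideal} with $J$ the preimage under the projection $D=\{0,1\}\times C\to C$ of $\{r\}$ (resp.\ $\{r,s\}$) together with all of the $1$\nobreakdash-component as needed, and check via Definition~\ref{ideal} and the list of ideals for $\Mor_Q$ in Subsection~\ref{mulmoralg} that this $J$ is an ideal; the output is the assertion that $Lf$ is a map of $R$\nobreakdash-modules and of $LR$\nobreakdash-modules. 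I expect the only genuine subtlety to be the \emph{compatibility} claims --- that the structure obtained for one choice of $J$ restricts to, or extends, the one obtained for another --- which is not automatic from Theorem~\ref{thmideal} applied separately but follows by combining its uniqueness clause with the naturality of the restriction functors $\alpha^*$ and the fact that localizing fewer colours is a ``sub-case'' of localizing more; spelling that out cleanly (probably by a single diagram chase with the operad $\End_P(\eta_{\bf X})$ for nested ideals $J\subseteq J'$) is the main point requiring care, while all the operad bookkeeping is routine given Section~\ref{examples}.
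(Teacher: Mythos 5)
Your proposal follows essentially the same route as the paper: each item is obtained from Theorem~\ref{thmideal} applied to the coloured operads of Subsections~\ref{mulmod} and~\ref{mulmoralg} with a suitable ideal, and the compatibility between the structures obtained for different ideals is settled by the uniqueness clause (the paper simply observes that $f\longrightarrow Lf$ is a commutative diagram, so the structures on the localized ends are the ones from the earlier items). Two bookkeeping slips should be fixed, both stemming from the convention of Definition~\ref{extensions} that $J$ is the set of colours that are \emph{not} localized. In~(iv) your correspondence is swapped: $J=\{r\}$ leaves $R$ unlocalized and localizes $s$ and $m$, hence gives the $R$\nobreakdash-$LS$\nobreakdash-bimodule structure, while $J=\{s\}$ gives the $LR$\nobreakdash-$S$\nobreakdash-bimodule structure. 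In~(v) the correct ideal for the $R$\nobreakdash-module claim is exactly the preimage $\{(0,r),(1,r)\}$, and $J=\emptyset$ for the $LR$\nobreakdash-module claim; adding ``all of the $1$\nobreakdash-component'' would both fail Definition~\ref{ideal} (since $\Mor_Q((0,m);(1,m))\neq 0$ for $Q=\LMod_{\Ass}$) and prevent localizing $M_2$, so that phrase should be dropped.
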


\begin{proof}
This follows from Theorem \ref{thmideal} using the coloured operads of Subsections \ref{mulmod} and
\ref{mulmoralg}, by choosing a suitable ideal in each case.
In part~(i), pick the operads $\Ass$ and $\Com$, viewed as coloured operads with one colour, together with the ideal $J=\emptyset$
in each case. In part~(ii), pick the coloured operad $\Mor_{\Ass}$ of Subsection~\ref{mulmoralg}, together with
the ideal $J=\emptyset$. (Note that $f\longrightarrow Lf$ is a commutative diagram of morphisms of monoids,
and therefore $LR_1$ and $LR_2$ are equipped with the monoid structure given by~(i).)
In part~(iii), use first the coloured operad $\LMod_{\Ass}$ of Subsection \ref{mulmod} with
the ideal $J=\{r\}$ in order to endow $LM$ with a left $R$\nobreakdash-module structure, and choose the ideal
$J=\emptyset$ to endow $LM$ with an $LR$\nobreakdash-module structure extending the previous $R$\nobreakdash-module structure.
Similarly for right modules. For bimodules, in part~(iv), use the coloured operad $\BMod_{\Ass}$ with each of the ideals
$J=\{r,s\}$, $J=\{r\}$, $J=\{s\}$, and $J=\emptyset$, in order to endow $LM$ with an $R$\nobreakdash-$S$\nobreakdash-bimodule structure,
an $R$\nobreakdash-$LS$\nobreakdash-bimodule structure, an $LR$\nobreakdash-$S$\nobreakdash-bimodule structure, and an $LR$\nobreakdash-$LS$\nobreakdash-bimodule structure, respectively.
In part (v), use the coloured operad $\Mor_Q$ described in Subsection~\ref{mulmoralg} for $Q=\LMod_{\Ass}$, with the ideal $J=\{(0,r),(1,r)\}$
in order to infer that $Lf$ is a morphism of $R$\nobreakdash-modules, and $J=\emptyset$
in order to infer that $Lf$ is a morphism of $LR$\nobreakdash-modules. Similarly with $Q=\RMod_{\Ass}$ for right $R$\nobreakdash-modules
and $Q=\BMod_{\Ass}$ for $R$\nobreakdash-$S$\nobreakdash-bimodules. As in~(ii), the module or bimodule structures
on $LM_1$ and $LM_2$ are those given by (iii) or~(iv), since $f\longrightarrow Lf$ is a commutative diagram of morphisms.
\end{proof}

\begin{cor}
Let $(L,\eta)$ be a closed localization on a closed symmetric monoidal category $\E$.
If $P$ is a $C$\nobreakdash-coloured operad in~$\E$, then the $C$\nobreakdash-coloured collection $LP$ defined as
\[
(LP)(c_1,\ldots, c_n; c)=L(P(c_1,\ldots, c_n;c))
\]
has a unique $C$\nobreakdash-coloured operad structure such that the map $P\longrightarrow LP$
induced by~$\eta$ is a morphism of $C$\nobreakdash-coloured operads.
\end{cor}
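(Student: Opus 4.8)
The plan is to transport the operad structure of $P$ along $\eta$, in the same spirit as the proof of Theorem \ref{thmideal}, exploiting that a closed localization is detected by orthogonality against $L$\nobreakdash-local objects. Concretely, I would take the unit maps of $LP$ to be the composites $I\to P(c;c)\xrightarrow{\eta}L(P(c;c))$ and the symmetric group actions on $LP$ to be the images under $L$ of those of $P$. For the composition product, fix data $(c_1,\ldots,c_n;c)$ and tuples $(a_{i,1},\ldots,a_{i,k_i};c_i)$ as in the definition of a coloured operad, let $\mu$ be the corresponding composition product of $P$, and let $g=\eta\otimes\cdots\otimes\eta$ be the map from the source of $\mu$ to the corresponding tensor product of components of $LP$. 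Since $\eta_X\colon X\to LX$ is an $L$\nobreakdash-equivalence for every $X$ and, by (\ref{shift}), tensor products of $L$\nobreakdash-equivalences are again $L$\nobreakdash-equivalences, the map $g$ is an $L$\nobreakdash-equivalence, whereas $L\bigl(P(a_{1,1},\ldots,a_{n,k_n};c)\bigr)$ --- the codomain of the composite $\eta\circ\mu$ --- is $L$\nobreakdash-local. Hence $\eta\circ\mu$ factors uniquely as $h\circ g$ for a morphism $h$, and I would declare $h$ to be the composition product of $LP$; the identity $h\circ g=\eta\circ\mu$ then says exactly that $\eta$ respects composition products.

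It remains to verify the operad axioms for $LP$, together with uniqueness. Each associativity, unit, or equivariance identity to be checked for $LP$ is an equality of two morphisms out of a tensor product of components of $LP$ into an ($L$\nobreakdash-local) component of $LP$. Precomposing both sides with the relevant tensor product of copies of $\eta$ --- an $L$\nobreakdash-equivalence by (\ref{shift}) --- the two composites agree, because the corresponding identity holds in $P$ and $\eta$ intertwines the structure maps by construction, and precomposition with an $L$\nobreakdash-equivalence is injective on morphisms into an $L$\nobreakdash-local object. Thus $LP$ is a $C$\nobreakdash-coloured operad and $\eta\colon P\to LP$ a morphism of $C$\nobreakdash-coloured operads, and the structure is unique with this property, since any such structure on $LP$ must satisfy precisely the defining equations of the one constructed above and the orthogonal factorizations used to define it are unique. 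Alternatively --- and this is really the same argument repackaged --- one can identify $C$\nobreakdash-coloured operads in $\E$ with the algebras over a single non\nobreakdash-symmetric coloured operad $\mathcal{O}$ whose set of colours is the set of tuples $(c_1,\ldots,c_n;c)$ with entries in $C$, and then read off the statement from Theorem \ref{thmideal} applied with the ideal $J=\emptyset$, using the observation after its proof that the theorem also holds for non\nobreakdash-symmetric operads.

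I do not expect a genuine obstacle here beyond bookkeeping; the only point that really uses the hypotheses is that every tensor product appearing in the structure maps and axioms of $P$ becomes an $L$\nobreakdash-equivalence after applying $\eta$, which is exactly where the assumption that $L$ is \emph{closed} enters, through (\ref{shift}). In the alternative route, the one nontrivial ingredient is the explicit construction of $\mathcal{O}$ --- in particular the relations encoding equivariance of the composition products.
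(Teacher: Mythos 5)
Your argument is correct, but your primary route is genuinely more hands-on than the paper's. The paper disposes of this corollary in two lines: it invokes the existence (from Berger--Moerdijk, Examples 1.5.6 and 1.5.7 of \cite{BM07}) of a coloured operad whose algebras are exactly the $C$\nobreakdash-coloured operads in $\E$, and then applies Theorem~\ref{thmideal} with the empty ideal --- precisely your ``alternative route,'' except that the paper cites the construction of that universal coloured operad rather than building it. Your main argument instead transports the structure by hand: units $\eta\circ u_c$, actions $L(\sigma^*)$ (which are in any case part of the given collection structure of $LP$), and composition products obtained as the unique factorizations of $\eta\circ\mu$ through the $L$\nobreakdash-equivalences $\eta\otimes\cdots\otimes\eta$, whose codomains are $L$\nobreakdash-local. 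This is sound: closedness enters exactly where you say, via~(\ref{shift}), to make the tensor products of $\eta$'s $L$\nobreakdash-equivalences, while the injectivity of precomposition used for the axioms and for uniqueness only needs ordinary orthogonality against $L$\nobreakdash-local objects; the axiom checks reduce, after precomposition, to rewriting with the defining identity $h\circ(\eta\otimes\cdots\otimes\eta)=\eta\circ\mu$ and naturality of $\eta$ with respect to $\sigma^*$ and the symmetry isomorphisms, and then invoking the corresponding axiom in $P$ --- routine, as you anticipate. What each approach buys: the paper's proof is shorter and exhibits the corollary as a genuine instance of the general preservation theorem (the same mechanism as for monoids, modules, etc.), at the price of an external citation for the universal coloured operad encoding units, compositions, and equivariance; your direct proof is self-contained and elementary, at the price of the bookkeeping you acknowledge, and it would be the natural fallback if one did not want to verify that equivariance is expressible by such a coloured operad.
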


\begin{proof}
For each set $C$, there is a coloured operad
whose algebras are precisely the $C$\nobreakdash-coloured operads in $\mathcal{E}$; for a description,
see \cite[Examples 1.5.6 and 1.5.7]{BM07}. The statement follows by applying Theorem~\ref{thmideal}
to this coloured operad, with the empty ideal.
\end{proof}

As we next explain, Corollary~\ref{cormodules} implies a number of known results about
preservation of certain structures under localizations. Subsections \ref{spaces} and \ref{exactlocs}
refer to the homotopy categories of spaces
and spectra, respectively, in which the corresponding results are weak forms of the stronger results
described in Section~\ref{principal}.

\subsection{Discrete rings and modules}
\label{discrete}
In the category $\Ab$ of abelian groups, given a homomorphism $f\colon A\longrightarrow B$,
an abelian group $G$ is called \emph{$f$\nobreakdash-local} if it is orthogonal to $f$, that is, if
\[
\Ab(f,G)\colon \Ab(B,G)\longrightarrow\Ab(A,G)
\]
is a bijection. A homomorphism is called an \emph{$f$\nobreakdash-equivalence} if it is
orthogonal to all $f$\nobreakdash-local groups. 
By general results about locally presentable categories
(see \cite[Theorem~1.39]{AR94}), there is a localization functor $(L_{f},\eta)$
for every $f$ on the category of abelian groups,
called \emph{$f$\nobreakdash-localization}, such that $\eta_G\colon G\longrightarrow L_{f}G$
is an $f$\nobreakdash-equivalence into an $f$\nobreakdash-local group for all $G$.

Thus $L_f$ is a closed localization if we endow the category of abelian groups with the
closed symmetric monoidal structure given by the tensor product over~$\Z$ and the canonical
enrichment of $\Ab$ over itself.
Hence, we infer from Corollary~\ref{cormodules} the following observation made in \cite[Theorems 3.8 and 3.9]{Cas00},
where by a ring we mean an associative ring $R$ with a unit morphism $\Z\longrightarrow R$
(so the zero ring is not excluded):

\begin{prop}
In the category of abelian groups, every $f$\nobreakdash-localization preserves the classes of rings,
commutative rings, left or right modules over a ring, and bimodules over rings. \qed
\end{prop}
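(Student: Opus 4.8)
The plan is to obtain this as an immediate consequence of Corollary~\ref{cormodules}, applied to $\E=\Ab$ with the closed symmetric monoidal structure given by $\otimes_{\Z}$, unit $\Z$, and internal hom $\Hom_{\Z}(-,-)$; recall that rings in the sense fixed above are exactly the monoids of this monoidal category, and modules and bimodules over them are the corresponding notions. The only hypothesis of that corollary that is not already at hand is that $L_{f}$ be a \emph{closed} localization in the sense of Definition~\ref{closed}, so this is the point I would verify.

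First I would record the elementary fact that, for abelian groups $Y$ and $Z$, the internal hom object $\Hom_{\Z}(Y,Z)$ has the ordinary hom-set $\Ab(Y,Z)$ as its underlying set: applying the tensor--hom adjunction with $\Z$ in the first variable gives a natural bijection $\Ab(Y,Z)\cong\Ab(\Z,\Hom_{\Z}(Y,Z))$, and the right-hand side is by definition the underlying set of $\Hom_{\Z}(Y,Z)$. Next, let $g\colon X\longrightarrow Y$ be an $f$-equivalence and $Z$ an $f$-local abelian group. The map $\Hom_{\Z}(g,Z)\colon\Hom_{\Z}(Y,Z)\longrightarrow\Hom_{\Z}(X,Z)$ is a homomorphism of abelian groups, by functoriality of the internal hom, and under the identification above its underlying function is $\Ab(g,Z)\colon\Ab(Y,Z)\longrightarrow\Ab(X,Z)$. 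By the definition of $f$-equivalence recalled in this subsection, $\Ab(g,Z)$ is a bijection; since a bijective homomorphism of abelian groups is an isomorphism in $\Ab$, it follows that $\Hom_{\Z}(g,Z)$ is an isomorphism. This is precisely the condition required by Definition~\ref{closed}, so $L_{f}$ is a closed localization on $\Ab$.

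Finally I would invoke Corollary~\ref{cormodules} for $\E=\Ab$ and $L=L_{f}$: part~(i) gives that $L_{f}R$ is a ring (commutative when $R$ is) with $\eta_{R}$ a ring homomorphism; part~(iii) gives that $L_{f}M$ is a left, respectively right, $R$-module whenever $M$ is; and part~(iv) gives that $L_{f}M$ is an $R$-$S$-bimodule whenever $M$ is. Together these assert exactly that $L_{f}$ preserves all four classes named in the statement.

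The one step that needs genuine attention is the passage from the set-level orthogonality used to \emph{define} $f$-localization on $\Ab$ to the internal-hom orthogonality demanded by Definition~\ref{closed}. This causes no trouble here precisely because $\Ab$ is enriched over itself, so the external and internal hom functors agree on underlying sets and a set-theoretic bijection of hom-groups is automatically an isomorphism in $\Ab$; in any analogous situation (e.g.\ another base monoidal category) this would be the first point to check, and it is the only place where the special features of $\Ab$ are used.
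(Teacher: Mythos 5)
Your proposal is correct and follows essentially the same route as the paper: endow $\Ab$ with the $\otimes_{\Z}$ monoidal structure and its self-enrichment, observe that $L_f$ is a closed localization (the paper asserts this in one sentence, while you spell out the underlying-set identification of $\Hom_{\Z}(Y,Z)$ with $\Ab(Y,Z)$ and the fact that a bijective homomorphism is an isomorphism), and then apply Corollary~\ref{cormodules}(i), (iii), (iv).
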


\subsection{$H$\nobreakdash-spaces}
\label{spaces}
Let ${\rm Ho}$ be the homotopy category of $k$\nobreakdash-spaces with the
Quillen model structure (as in \cite[II.3]{Qui67}), or the homotopy category of simplicial sets
with the Kan model structure.
Each of these (equivalent) categories is closed symmetric monoidal
with the corresponding derived product as tensor product, the one\nobreakdash-point space as unit,
and the derived mapping space $\map(-,-)$ as internal hom; cf.~\cite[Theorem 4.3.2]{Hov99}.
A monoid in ${\rm Ho}$ is a homotopy associative $H$\nobreakdash-space, i.e.,
a space $X$ together with a
multiplication map $X\times X\longrightarrow X$ that is associative up to homotopy
and with a homotopy unit.

For every map $f$ between spaces there is an $f$\nobreakdash-localization functor on ${\rm Ho}$
(see \cite{Far96} or \cite[4.1.1]{Hir03}), which is closed by construction.
Hence, the following fact is deduced from Corollary~\ref{cormodules}:

\begin{prop}
Every $f$\nobreakdash-localization on spaces preserves the classes
of homotopy associative $H$\nobreakdash-spaces and homotopy commutative $H$\nobreakdash-spaces. $\hfill \qed$
\end{prop}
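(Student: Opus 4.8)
The plan is to deduce this directly from Corollary~\ref{cormodules}(i), applied to the closed symmetric monoidal category $\Ho$. The only point that genuinely needs to be checked is that an $f$\nobreakdash-localization functor $(L_{f},\eta)$ on $\Ho$ is a \emph{closed} localization in the sense of Definition~\ref{closed}.

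First I would assemble the ingredients that are already in place. By \cite[Theorem~4.3.2]{Hov99}, $\Ho$ is closed symmetric monoidal with internal hom the derived mapping space $\map(-,-)$; and, as recalled above, a monoid in $\Ho$ is exactly a homotopy associative $H$\nobreakdash-space, while a commutative monoid in $\Ho$ (that is, a $\Com$\nobreakdash-algebra in $\Ho$, since $\Com(n)$ is a point) amounts to a homotopy commutative, homotopy associative, homotopy unital multiplication, i.e.\ a homotopy commutative $H$\nobreakdash-space.

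The key step is the verification of closedness. Here I would use the description of $f$\nobreakdash-localization from \cite{Far96} (see also \cite[4.1.1]{Hir03}): an object $Z$ is $f$\nobreakdash-local precisely when $\map(f,Z)$ is a weak equivalence, the $f$\nobreakdash-equivalences are by definition the maps $g$ for which $\map(g,Z)$ is a weak equivalence for every $f$\nobreakdash-local $Z$, and, by the universal properties of localizations recalled above, the $L_{f}$\nobreakdash-local objects and $L_{f}$\nobreakdash-equivalences coincide with the $f$\nobreakdash-local objects and $f$\nobreakdash-equivalences respectively. Since a weak equivalence between mapping spaces is the same thing as an isomorphism in $\Ho$, and $\map(-,-)$ is the internal hom of $\Ho$, this is word for word the requirement of Definition~\ref{closed}. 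Hence $L_{f}$ is closed, and Corollary~\ref{cormodules}(i) applies: if $X$ is a homotopy associative $H$\nobreakdash-space, then $L_{f}X$ carries a (homotopy unique) homotopy associative $H$\nobreakdash-space structure for which $\eta_{X}$ is a map of $H$\nobreakdash-spaces, and likewise in the homotopy commutative case.

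The main obstacle is purely a matter of matching definitions correctly: one must make sure that the internal hom of $\Ho$ really is the derived mapping space (so that Definition~\ref{closed} is being tested against the right object), and that Farjoun's notion of $f$\nobreakdash-local equivalence is literally the orthogonality-with-mapping-spaces condition appearing in that definition, rather than the weaker plain orthogonality~(\ref{orthogonal}). Once these identifications are made, there is no further content.
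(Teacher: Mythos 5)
Your argument is correct and is essentially the paper's own: the paper likewise identifies (commutative) monoids in $\Ho$ with homotopy (commutative) associative $H$\nobreakdash-spaces, notes that $f$\nobreakdash-localizations on $\Ho$ are closed by construction --- since $f$\nobreakdash-local objects and $f$\nobreakdash-equivalences are defined via the derived mapping space, which is the internal hom of $\Ho$ --- and then invokes Corollary~\ref{cormodules}(i). Your more explicit check that Farjoun's mapping-space orthogonality matches Definition~\ref{closed} (rather than the weaker orthogonality~(\ref{orthogonal})) is exactly the point the paper compresses into ``closed by construction.''
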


\subsection{Homotopy ring spectra and homotopy module spectra}
\label{exactlocs}
Let ${\rm Ho}^{\rm s}$ be the stable homotopy category of Adams\nobreakdash--Boardman, which is
closed symmetric monoidal with the derived smash product as tensor product,
the sphere spectrum as unit, and the derived function spectrum $F(-,-)$ as internal hom.

A monoid in ${\rm Ho}^{\rm s}$ is a homotopy ring spectrum and a module over a monoid is a homotopy module spectrum.
Hence, the following result, which extends \cite[Theorem 4.2]{CG05}, is a consequence of Corollary \ref{cormodules}:

\begin{prop}
If $(L, \eta)$ is a closed localization on spectra and $R$ is a homotopy ring spectrum,
then $LR$ admits a unique homotopy ring structure such that
$\eta_R\colon R\longrightarrow LR$ is a homotopy ring map. If $M$ is a homotopy $R$\nobreakdash-module,
then $LM$ admits a unique homotopy $R$\nobreakdash-module structure such that
\hbox{$\eta_M\colon M\longrightarrow LM$} is a homotopy $R$\nobreakdash-module map,
and $LM$ admits a unique homotopy $LR$\nobreakdash-module structure
extending the $R$\nobreakdash-module structure. $\hfill \qed$
\label{LRLM}
\end{prop}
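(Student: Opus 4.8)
The plan is to deduce Proposition~\ref{LRLM} directly from Corollary~\ref{cormodules} by exhibiting the stable homotopy category ${\rm Ho}^{\rm s}$ as a closed symmetric monoidal category in which the relevant localizations are closed in the sense of Definition~\ref{closed}. First I would recall that ${\rm Ho}^{\rm s}$ is closed symmetric monoidal with the derived smash product $\wedge$ as tensor product, the sphere spectrum $S^0$ as unit, and the derived function spectrum $F(-,-)$ as internal hom; this is standard and already stated in the text preceding the proposition. Under this structure a monoid is precisely a homotopy ring spectrum, a left (or right) module over such a monoid is a homotopy module spectrum, and a morphism of monoids is a homotopy ring map. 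Thus parts~(i), (iii), and (v) of Corollary~\ref{cormodules}, applied in $\E={\rm Ho}^{\rm s}$, yield exactly the three assertions of the proposition, provided $(L,\eta)$ is a closed localization.

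The main point to verify is therefore that a closed localization on spectra is a closed localization on the closed symmetric monoidal category ${\rm Ho}^{\rm s}$ in the technical sense of Definition~\ref{closed}; but this is precisely what the phrase ``closed localization on spectra'' is taken to mean, so the hypothesis transfers without work. It then remains only to spell out the dictionary: Corollary~\ref{cormodules}(i) gives $LR$ its unique homotopy ring structure making $\eta_R$ a homotopy ring map; Corollary~\ref{cormodules}(iii), with the ideal $J=\{r\}$, equips $LM$ with its unique homotopy $R$\nobreakdash-module structure making $\eta_M$ an $R$\nobreakdash-module map, and the same corollary with $J=\emptyset$ produces the $LR$\nobreakdash-module structure on $LM$ extending it. Uniqueness in each case is inherited verbatim from the uniqueness clauses in Corollary~\ref{cormodules}, which in turn come from the universal property of the pullback used in Lemma~\ref{lem01}.

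I do not expect a genuine obstacle here, since the statement is explicitly flagged in the text as ``a consequence of Corollary~\ref{cormodules}''; the only thing requiring care is bookkeeping of the relevant coloured operads ($\Ass$ for part~(i), $\LMod_{\Ass}$ and $\RMod_{\Ass}$ for the module assertions) and the choice of ideal ($\emptyset$ versus $\{r\}$) that distinguishes ``module over $R$'' from ``module over $LR$''. One minor subtlety worth a remark is that in the homotopy category the smash product and function spectrum are only defined up to coherent isomorphism, but ${\rm Ho}^{\rm s}$ is still a bona fide closed symmetric monoidal category, so Corollary~\ref{cormodules} applies on the nose; the resulting structures on $LR$ and $LM$ are structures in ${\rm Ho}^{\rm s}$, i.e.\ ``up to homotopy'', which is exactly the strength claimed. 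The stronger, strict (point\nobreakdash-set level) versions of these statements are deferred to Section~\ref{principal}, as the text indicates.
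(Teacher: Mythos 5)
Your proposal is correct and follows essentially the same route as the paper: the paper's proof consists precisely of observing that monoids and modules in the closed symmetric monoidal category ${\rm Ho}^{\rm s}$ (derived smash product, sphere unit, derived function spectrum) are homotopy ring and module spectra, and then invoking Corollary~\ref{cormodules} for the given closed localization. The only trivial blemish is your passing citation of part~(v) of Corollary~\ref{cormodules}, which concerns module maps and is not needed here; the dictionary you actually spell out, using parts~(i) and~(iii), is exactly what the paper intends.
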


Note that, in particular, if $M$ is a homotopy $R$\nobreakdash-module and $LR\simeq 0$, then 
we deduce that $LM\simeq 0$ as well.

Examples of closed localizations on ${\rm Ho}^{\rm s}$ are stable
$f$\nobreakdash-localizations when $f$ is a wedge of maps $\{\Sigma^k g\}$ for all $k\in\Z$
and some map~$g$; see \cite[Theorem~2.7]{CG05}.
Homological localizations are of this kind.

A localization on ${\rm Ho}^{\rm s}$ is closed if and only if
it commutes with suspension, and this is equivalent to the property of
preserving cofibre sequences
(see also the remarks made in Subsection~\ref{nowspectra} below).
Thus it is important to distinguish closed localizations 
from other localizations on ${\rm Ho}^{\rm s}$ 
that do not preserve cofibre sequences, such as Postnikov sections.
Indeed, Proposition~\ref{LRLM} does not hold if $L$ is a Postnikov section
and $R=K(n)$; see (\ref{PKn}) below for details.

\section{Homotopical localization functors}
\label{holocs}
When one works with model categories, orthogonality between maps and objects is more conveniently
discussed in terms of
\emph{homotopy function complexes}. This is a stronger notion than orthogonality defined in terms
of homotopy classes of maps,
and distinct from orthogonality defined in terms of an internal hom (if available), in general.
A \emph{homotopy function complex} in a model category $\M$ is a
functorial choice, for every two objects $X$ and $Y$ in $\M$, of a fibrant simplicial set $\map(X,Y)$
whose homotopy type is the same
as the diagonal of the bisimplicial set $\M(X^*, Y_*)$ where $X^*\longrightarrow X$ is a cosimplicial resolution
of $X$ and $Y\longrightarrow Y_*$ is a simplicial resolution of $Y$, as defined, e.g., in \cite[16.1]{Hir03}.
Thus, the homotopy type of $\map(X,Y)$ does not change if we replace $X$ or $Y$ by weakly equivalent objects, and
$\pi_0\map(X,Y)$ is in natural bijective correspondence with the set $[X,Y]$ of morphisms from $X$ to $Y$
in $\Ho(\M)$. For more details, see \cite[Theorem 17.7.2]{Hir03}.
The existence of homotopy function complexes
in every model category is proved in \cite[5.4]{Hov99} and \cite[17.3]{Hir03}.

Recall that a \emph{simplicial category}
is a category $\C$ equipped with an enrichment, a tensor and a cotensor over the category of simplicial sets.
Thus, there are functors
\begin{gather}\notag
\Map(-,X)\colon \C^{\rm op}\longrightarrow \sSets; \quad
-\boxtimes X\colon \sSets\longrightarrow \C; \quad
X^{(-)}\colon \sSets^{\rm op}\longrightarrow \C, \notag
\end{gather}
for every object $X$ of $\C$, satisfying certain compatibility relations. See \cite{GJ99} or \cite[\S 9.1]{Hir03} for details.
Among these, for every two objects $X$ and $Y$ of $\C$ and every simplicial set $K$, there are natural bijections
\begin{equation}
\C(K\boxtimes X, Y)\cong \sSets(K,\Map(X,Y))\cong \C(X, Y^K).
\label{adj}
\end{equation}

A \emph{simplicial model category} is a model category $\M$ that is also a simplicial category and satisfies Quillen's SM7 axiom:
If $f\colon X\longrightarrow Y$ is a cofibration in $\M$ and $g\colon U \longrightarrow V$ is a fibration in $\M$, then the induced map
\[
\Map(Y,U)\longrightarrow \Map(Y,V)\times_{\Map(X,V)}\Map(X,U)
\]
is a fibration of simplicial sets that is trivial if $f$ or $g$ is trivial.

If $\M$ is a simplicial model category, then $\map(X,Y)=\Map(QX,FY)$ defines
a homotopy function complex, where $\Map(-,-)$ denotes the simplicial enrichment,
$Q$ is a functorial cofibrant replacement and $F$ is a functorial fibrant replacement.

Now let $\M$ be any model category with a choice of homotopy function complexes denoted by $\map(-,-)$.
We will also assume that $\M$ has functorial factorizations, as in \cite{Hov99} and \cite{Hir03}.
A morphism $f\colon X\longrightarrow Y$ and an object $Z$ are called \emph{simplicially orthogonal} if the induced map
\begin{equation}
\label{simport}
\map(f,Z)\colon \map(Y,Z)\longrightarrow \map(X,Z)
\end{equation}
is a weak equivalence of simplicial sets. This form of orthogonality is
used in the following definition.

\begin{defn}
A \emph{homotopical localization} on a model category $\M$ with
homotopy function complexes $\map(-,-)$ is a functor $L\colon\M\longrightarrow\M$ that preserves weak equivalences
and takes fibrant values, together with a natural transformation
\hbox{$\eta\colon {\rm Id}_{\M}\longrightarrow L$}
such that, for every object $X$, the following hold:
\begin{itemize}
\item[(i)] $L\eta_X\colon LX\longrightarrow LLX$ is a weak equivalence;
\item[(ii)] $\eta_{LX}$ and $L\eta_X$ are equal in the homotopy category $\Ho(\M)$;
\item[(iii)] $\eta_X\colon X\longrightarrow LX$ is a cofibration such that the map
\[
\map(\eta_X,LY)\colon\map(LX, LY)\longrightarrow \map(X, LY)
\]
is a weak equivalence of simplicial sets for all $Y$.
\end{itemize}
\end{defn}

The condition that $L$ takes fibrant values and the condition that $\eta_X$ is a cofibration
for all $X$ are technical, yet useful in practice. None of the two imposes a restriction on
the definition, since, if $L$ does not take fibrant values, then we may replace it by~$FL$,
where $F$ is a fibrant replacement functor, and we may also decompose $\eta_X$ functorially 
into a cofibration followed by a trivial fibration for all~$X$,
\[ X\stackrel{\xi_X}{\longrightarrow} KX\stackrel{\nu_X}{\longrightarrow} LX. \]
Then $K$ becomes a functor and
$\xi\colon {\rm Id}_{\M}\longrightarrow K$ a natural transformation for which
(i), (ii) and (iii) hold. (The condition $\xi_{KX}\simeq K\xi_X$ is satisfied
since $\eta_{LX}\simeq L\eta_X$ and $\nu_{LX}\circ K\nu_X=L\nu_X\circ \nu_{KX}$,
as $\nu\colon K\longrightarrow L$ is also a natural transformation.)

Every homotopical localization becomes just an idempotent functor when we pass to the homotopy category ${\rm Ho}(\M)$,
since $\pi_0\map(X,Y)\cong [X,Y]$.
If $(L,\eta)$ is a homotopical localization, then the fibrant objects of $\M$ 
weakly equivalent to $LX$ for some $X$ are called \emph{$L$\nobreakdash-local}, and the maps
$f\colon X\longrightarrow Y$ such that $Lf\colon LX\longrightarrow LY$ is a weak equivalence
are called \emph{$L$\nobreakdash-equivalences}. In addition to orthogonality in ${\rm Ho}({\M})$,
$L$\nobreakdash-local objects and $L$\nobreakdash-equivalences are simplicially orthogonal as defined in (\ref{simport}),
and in fact a fibrant object is $L$\nobreakdash-local if and only if it is simplicially orthogonal to all $L$\nobreakdash-equivalences,
while a map is an $L$\nobreakdash-equivalence if and only if it is simplicially orthogonal to all $L$\nobreakdash-local objects.

If $(L,\eta)$ is a homotopical localization on simplicial sets or
$k$\nobreakdash-spaces, then either all nonempty
$L$\nobreakdash-local spaces are weakly equivalent to a point, or
all $L$\nobreakdash-equivalences are bijective on connected
components. The following argument to prove this claim is well
known. If there is an $L$\nobreakdash-equivalence that is not
bijective on connected components, then it has a retract of the form
$S^0\longrightarrow *$ or $*\longrightarrow S^0$ (besides the
trivial case $\emptyset\longrightarrow *$). Since every retract of
an $L$\nobreakdash-equivalence is an $L$\nobreakdash-equivalence, it
follows that, if $X$ is $L$\nobreakdash-local, then $X$ has the same
homotopy type as $X\times X$, which implies that $X$ is weakly
contractible (or empty). Because of this observation, we will assume
throughout that $\eta$ induces an isomorphism $\pi_0(X)\cong
\pi_0(LX)$ for any homotopical localization $L$ and all spaces~$X$.

Sufficient conditions for a localization on ${\rm Ho}(\M)$ in order that it be induced
by a homotopical localization on $\M$ were given in \cite{CC06}.
Most localizations encountered in practice, including all $f$\nobreakdash-localizations in the sense of \cite{Far96},
are homotopical localizations.
In fact, if $\M$ is a left proper, cofibrantly generated, locally presentable simplicial model category without empty hom\nobreakdash-sets,
and one assumes the validity of Vop\v{e}nka's principle from set theory, then every localization on ${\rm Ho}(\M)$ comes
from an $f$\nobreakdash-localization on $\M$ for some map~$f$; see~\cite[Theorem 2.3]{CC06}.

\section{Model structures on categories of operads}
\label{modelstructures}
Before presenting our main results, we still need to recall from \cite{BM03} and \cite{BM07} the terminology and basic properties of
a model structure for the category of coloured operads over a fixed set of colours.

A \emph{monoidal model category} $\E$ is a closed symmetric monoidal category
with a model structure that satisfies the \emph{pushout\nobreakdash-product axiom} (see \cite[\S 4.2]{Hov99}, \cite{SS00}):
If $f\colon X\longrightarrow Y$ and $g\colon U\longrightarrow V$ are cofibrations in $\E$, then the induced map
\[
(X\otimes V)\coprod_{X\otimes U} (Y\otimes U)\longrightarrow Y\otimes V
\]
is a cofibration that is trivial if $f$ or $g$ is trivial. We will also assume that the unit $I$ of $\E$ is cofibrant.
Using the adjunction between $\otimes$ and $\Hom_{\E}(-,-)$, one obtains the following equivalent formulation of the
pushout\nobreakdash-product axiom: If $f\colon X\longrightarrow Y$ is a cofibration in $\E$ and $g\colon U\longrightarrow V$
is a fibration in $\E$, then the induced map
\[
\Hom\nolimits_{\E}(Y,U)\longrightarrow
\Hom\nolimits_{\E}(Y,V)\times_{\Hom\nolimits_{\E}(X,V)}\Hom\nolimits_{\E}(X,U)
\]
is a fibration in $\E$ that is trivial if either $f$ or $g$ is trivial.

Let $\E$ be a monoidal model category. If $\E$ is cofibrantly generated, then,
as explained in \cite[\S 3]{BM03} and \cite[\S 3]{BM07},
the category of $C$\nobreakdash-coloured collections in~$\E$ admits a model
structure in which a morphism $K\longrightarrow L$ is a weak equivalence (resp.\ a fibration) if and only if for each
tuple of colours $(c_1,\ldots, c_n; c)$ the map
\[
K(c_1,\ldots, c_n; c)\longrightarrow L(c_1,\ldots, c_n; c)
\]
is a weak equivalence (resp.\ a fibration) in $\E$.
This model structure can be transferred along the free\nobreakdash-forgetful adjunction
\begin{equation}
\label{transferred}
F : {\rm Coll}_C(\E)\leftrightarrows {\rm Oper}_C(\E) : U
\end{equation}
to provide a model structure on the category of $C$\nobreakdash-coloured operads, under suitable assumptions on the category
$\E$, including still the assumption that $\E$ be cofibrantly generated; see \cite[Theorem~3.2]{BM03}
and \cite[Theorem~2.1 and Example~1.5.7]{BM07}.

The monoidal model categories of $k$\nobreakdash-spaces (with the Quillen model structure)
and simplicial sets satisfy these assumptions. Thus,
in any of these categories, a morphism of $C$\nobreakdash-coloured operads $f\colon P\longrightarrow Q$ is a weak equivalence
(resp.\ a fibration) if and only if $Uf$ is a weak equivalence (resp.\ a fibration) of $C$\nobreakdash-coloured collections.
Cofibrations are defined by the left lifting property with respect to the trivial fibrations.
A $C$\nobreakdash-coloured operad $P$ is cofibrant if the unique morphism $I_C\longrightarrow P$ is a cofibration, where $I_C$
is the initial $C$\nobreakdash-coloured operad defined by $I_C(c;c)=I$ for all $c$, and zero otherwise.

The $W$\nobreakdash-construction of Boardman\nobreakdash--Vogt for
$C$\nobreakdash-coloured operads (see \cite{BV73}, \cite{Vog03},
\cite{BM06}, \cite[\S 3]{BM07}) provides a cofibrant replacement for
$C$\nobreakdash-coloured operads $P$ whose underlying
$C$\nobreakdash-coloured collection (pointed by the unit) is
$\Sigma$-cofibrant; that is, the unit map $I\longrightarrow P(c;c)$ is a cofibration for all~$c$
and $P(c_1,\ldots,c_n;c)$ is cofibrant as a
$\Sigma_{c_1,\ldots,c_n}$\nobreakdash-space for all
$(c_1,\ldots,c_n;c)$, where $\Sigma_{c_1,\ldots,c_n}$ denotes the
subgroup of $\Sigma_n$ leaving $(c_1,\ldots,c_n)$ invariant; see
\cite[Theorem~3.5]{BM07}. This was implicit in \cite{BV73} for
topological operads, and further developed in~\cite{Vog03} for the
category of $k$\nobreakdash-spaces with the Str{\o}m model
structure.

From now on we will only consider categories of coloured operads admitting the model structure
transferred along (\ref{transferred}).
Under this assumption, for every function $\alpha\colon C\longrightarrow D$, the adjunction
\[
\alpha_{!} : {\rm Oper}_C(\E)\rightleftarrows {\rm Oper}_D(\E) : \alpha^*
\]
given by (\ref{coloradjoint1}) is a Quillen pair, since $\alpha^*$ preserves fibrations and weak equivalences.

Given a $C$\nobreakdash-coloured operad $P$, a \textit{cofibrant resolution} of $P$ is a trivial fibration of
$C$\nobreakdash-coloured operads $P_{\infty}\longrightarrow P$ where $P_{\infty}$ is cofibrant.
(For notational convenience, we also say that $P_{\infty}$ is a cofibrant resolution of $P$.)
Throughout we denote by $A_{\infty}$ an arbitrary but fixed cofibrant resolution of~$\Ass$,
and by $E_{\infty}$ a cofibrant resolution of $\Com$. (It is common practice to denote
by $A_{\infty}$ any non\nobreakdash-symmetric operad that is weakly equivalent to $\Ass$, and by $E_{\infty}$ any
operad that is weakly equivalent to $\Com$; here we assume them cofibrant for
simplicity in the statement of our results.)

We consider two important special cases where change of colours plays a role.

\subsection{$P_{\infty}$\nobreakdash-modules}
\label{ainftymods}
Let $P$ be any (one\nobreakdash-coloured) operad, and let $P_{\infty}\longrightarrow P$
be a cofibrant resolution. As explained in Subsection~\ref{mulmod},
$\Mod_P$ is a $C$\nobreakdash-coloured operad with $C=\{r,m\}$.
Let $\alpha$ denote the inclusion of $\{r\}$ into $C$.
If
\[
(\Mod_P)_{\infty}\longrightarrow \Mod_P
\]
is a cofibrant resolution of $\Mod_P$, then,
since $\alpha^*$ preserves trivial fibrations,
\[
\alpha^*(\Mod_P)_{\infty}\longrightarrow \alpha^*\Mod_P=P
\]
is a trivial fibration.
Hence there is a lifting (unique up to homotopy)
\begin{equation}
\label{jajaja}
\xymatrix{
& \alpha^*(\Mod_P)_{\infty} \ar[d] \\
P_{\infty} \ar@{.>}[ur] \ar[r] & P.
}
\end{equation}

If a pair $(R,M)$ is a $(\Mod_P)_{\infty}$\nobreakdash-algebra, then
$R=\alpha^*(R,M)$ is an algebra over $\alpha^*(\Mod_P)_{\infty}$
by (\ref{alphastar}), and hence a $P_{\infty}$\nobreakdash-algebra via (\ref{jajaja}).
Although the second component $M$ need not be a module,
we call it a \textit{$P_{\infty}$\nobreakdash-module} over~$R$.

\subsection{$P_{\infty}$\nobreakdash-maps}
\label{pinftymaps}
Let $P$ be a $C$\nobreakdash-coloured operad where $C$ is any set of colours, and
choose a cofibrant resolution $\varphi\colon P_{\infty}\longrightarrow P$.
Let $\Mor_{P}$ be as defined in Subsection~\ref{mulmoralg}, with $D=\{0,1\}\times C$.
For $i\in\{0,1\}$, let $\alpha_i\colon C\longrightarrow D$
be the functions defined as $\alpha_0(c)=(0,c)$ and $\alpha_1(c)=(1,c)$. Thus
$(\alpha_i)^*\Mor_P=P$ for both $i=0$ and $i=1$.
If $\Phi\colon \left(\Mor_P\right)_{\infty}\longrightarrow \Mor_P$ is a cofibrant resolution of $\Mor_P$, then,
as in~(\ref{jajaja}), there are morphisms (in fact, weak equivalences) of $C$-coloured operads
\begin{equation}
\label{jajajaja}
\xymatrix{
& (\alpha_i)^*\left(\Mor_P\right)_{\infty} \ar[d]^{(\alpha_i)^*\Phi} \\
P_{\infty} \ar@{.>}[ur]^-{\lambda_i} \ar[r]^-{\varphi} & P
}
\end{equation}
for $i=0$ and $i=1$, unique up to homotopy, rendering the triangle commutative.
Therefore, by (\ref{rhois}),
an algebra $\bf X$ over $\left(\Mor_P\right)_{\infty}$ gives rise to a pair of
$P_{\infty}$\nobreakdash-algebras
$({\bf X}_0,{\bf X}_1)$ with additional structure linking them, which is weaker than
a morphism of $P_{\infty}$\nobreakdash-algebras. Specifically,
since the unit $I$ of $\E$ is cofibrant, we may choose, for each $c\in C$, a lifting
\begin{equation}
\label{weakunits}
\xymatrix{
& \left(\Mor_P\right)_{\infty}((0,c);(1,c)) \ar[d] \\
I \ar[r]_-{u_c} \ar@{.>}[ur] & \Mor_P((0,c);(1,c))
}
\end{equation}
where $u_c$ is the map considered in (\ref{themap}). The lifting is not unique, but it is unique
up to homotopy. Hence, the composites
\[
X(0,c)\longrightarrow \left(\Mor_P\right)_{\infty}((0,c);(1,c))\otimes X(0,c)\longrightarrow X(1,c)
\]
yield together a homotopy class of maps ${\bf X}_0\longrightarrow{\bf X}_1$.
Each of these will be called a \textit{$P_{\infty}$\nobreakdash-map}.
This generalizes the notion of $A_{\infty}$\nobreakdash-map discussed in \cite[I.3]{BV73} and \cite[2.9]{MSS02}.
A lifting similar to (\ref{weakunits}) in the topological case was considered by Schw\"anzl and Vogt
in the context of~\cite{SV88}.
Note that there is also a lifting
\begin{equation}
\label{lifting}
\xymatrix{
& \Mor_{P_{\infty}} \ar[d] \\
\left(\Mor_{P}\right)_{\infty} \ar[r]^-{\Phi} \ar@{.>}[ur]^{\Psi} & \Mor_{P},
}
\end{equation}
since the vertical arrow is a trivial fibration of $C$\nobreakdash-coloured operads
and $\left(\Mor_{P}\right)_{\infty}$ is cofibrant. Hence, every morphism
of $P_{\infty}$\nobreakdash-algebras admits a $P_{\infty}$\nobreakdash-map structure.

For later use, we remark that (\ref{jajaja}) and (\ref{lifting}) yield, for $i=0$ and $i=1$,
\begin{equation}
\label{fancy}
\varphi\circ ((\alpha_i)^*\Psi)\circ \lambda_i = ((\alpha_i)^* \Phi) \circ \lambda_i=\varphi.
\end{equation}

\pagebreak

\section{Preservation of structures in monoidal model categories}
\label{principal}

In Section~\ref{preservation} we saw
that monoids (including homotopy associative $H$\nobreakdash-spaces and homotopy ring spectra), modules over monoids, and morphisms
between these, are preserved by closed localization functors in the unstable homotopy category
$\Ho$ or in the stable homotopy category $\Ho^{\rm s}$.
Yet, the categories of simplicial sets (or $k$\nobreakdash-spaces) and symmetric spectra also admit monoidal model structures that
allow one to define monoids and modules within the model categories themselves.
In the rest of the article, we study the preservation of such \emph{strict} structures in monoidal model categories,
by viewing them as algebras over coloured operads and using suitable rectification functors.

Thus, from now on, we restrict ourselves to coloured operads
in simplicial sets (or $k$\nobreakdash-spaces with the Quillen model structure)
acting on simplicial (or topological) monoidal model categories.
A monoidal model category $\M$ is called \emph{simplicial} if it is also
a simplicial model category, and the simplicial action commutes with the
monoidal product, i.e., there are natural coherent isomorphisms
\[
K\boxtimes(X\otimes Y)\cong (K\boxtimes X)\otimes Y
\]
where $K$ is any simplicial set and $X$, $Y$ are objects of~$\M$.
The same definition applies to the topological case.

While all simplicial sets are cofibrant, this is not so for $k$\nobreakdash-spaces.
Therefore, cofibrancy assumptions will be needed at certain places.
A remedy would be to use $k$\nobreakdash-spaces with the Str{\o}m structure.
However, this model structure is not known to be cofibrantly generated; hence,
it does not fit into the framework described in the preceding section.
While it is still possible to talk of cofibrant operads and cofibrant
algebras in this setting (see \cite{Vog03}) and our results remain valid
with the same proofs, to avoid working in two different settings
we stick to the Quillen model structure whenever $k$\nobreakdash-spaces are considered.

For the sake of clarity, we will emphasize notationally the distinction between the monoidal model
category $\E$ in which our coloured operads take values and~the monoidal model category $\M$
on which they act.
Thus, if $P$ is a $C$\nobreakdash-coloured operad in the category $\E$ of simplicial sets
(or $k$\nobreakdash-spaces) and $\M$ is a simplicial (resp.\ topological) monoidal
model category, then a $P$\nobreakdash-algebra ${\bf X}=(X(c))_{c\in C}$ is defined as an object of $\M^C$
equipped with a morphism of $C$\nobreakdash-coloured operads 
$P\longrightarrow {\rm End}({\bf X})$ in~$\E$,
where ${\rm End}({\bf X})$ is now defined as
\begin{equation}
\label{simplicialaction}
{\rm End}({\bf X})(c_1,\ldots, c_n;c)=\Map(X(c_1)\otimes\cdots\otimes X(c_n), X(c)),
\end{equation}
and $\Map(-,-)$ denotes the simplicial (resp.\ topological) enrichment of~$\M$.
This is consistent with the previous definitions, since the map
\[
\xymatrix{
\sSets(P(c_1,\ldots, c_n;c), \Map(X(c_1)\otimes\cdots\otimes X(c_n), X(c))) \ar[d] \\
\M(P(c_1,\ldots, c_n; c)\boxtimes (X(c_1)\otimes\cdots\otimes X(c_n)), X(c))
}
\]
is bijective by the adjunction (\ref{adj}).

Simplicial sets or $k$\nobreakdash-spaces ${\rm Hom}_{P}({\bf X},{\bf Y})(c_1,\ldots, c_n;c)$
and ${\rm End}_{P}({\bf X})(c_1,\ldots, c_n;c)$ are defined analogously as in (\ref{simplicialaction})
and (\ref{restriction}), for every $C$\nobreakdash-coloured operad $P$.

We say that two $P$\nobreakdash-algebra structures $\gamma,\gamma'\colon P\longrightarrow {\rm End}_P({\bf X})$
on an object ${\bf X}$
of ${\M}^C$ coincide up to homotopy if $\gamma\simeq\gamma'$ in the model category of $C$\nobreakdash-coloured operads.
(Homotopic means left and right homotopic.)

The following is the main theorem of this article:

\begin{thm}
\label{mainthm}
Let $(L,\eta)$ be a homotopical localization on a simplicial or topological monoidal model category $\M$.
Let $P$ be a cofibrant $C$\nobreakdash-coloured operad in simplicial sets or $k$\nobreakdash-spaces, where $C$ is any set,
and consider the extension of $(L,\eta)$ over $\M^C$ away from an ideal $J\subseteq C$ relative to $P$.
Let ${\bf X}$ be a $P$\nobreakdash-algebra such that $X(c)$ is cofibrant in~$\M$ for every $c\in C$.
Suppose that $(\eta_{\bf X})_{c_1}\otimes\cdots\otimes (\eta_{\bf X})_{c_n}$
is an $L$\nobreakdash-equivalence whenever $P(c_1,\ldots,c_n;c)$ is nonempty.
Then $L{\bf X}$ admits a homotopy unique $P$\nobreakdash-algebra structure such that
$\eta_{{\bf X}}$ is a map of $P$\nobreakdash-algebras.
\end{thm}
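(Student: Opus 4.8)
The plan is to imitate the proofs of Lemma~\ref{lem01} and Theorem~\ref{thmideal}, with the isomorphism used there replaced by a \emph{trivial fibration} that we can split only \emph{up to homotopy}; this is precisely where the cofibrancy of $P$ will enter, both for existence and for uniqueness. Concretely, I would first recall the $C$\nobreakdash-coloured operad ${\rm End}_P(\eta_{\bf X})$, the pullback of ${\rm End}_P({\bf X})\to{\rm Hom}_P({\bf X},L{\bf X})\leftarrow{\rm End}_P(L{\bf X})$ in simplicial sets (or $k$\nobreakdash-spaces) with its induced operad structure, and reduce the existence statement, exactly as in Lemma~\ref{lem01}, to the assertion that
\[
{\rm End}_P(L{\bf X})\longrightarrow{\rm Hom}_P({\bf X},L{\bf X})
\]
is a trivial fibration of $C$\nobreakdash-coloured collections. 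Granting this, its pullback ${\rm End}_P(\eta_{\bf X})\to{\rm End}_P({\bf X})$ is again a levelwise trivial fibration, hence a trivial fibration of $C$\nobreakdash-coloured operads in the transferred model structure of Section~\ref{modelstructures}; since $P$ is cofibrant, the map $I_C\to P$ is a cofibration, so the structure morphism $\gamma\colon P\to{\rm End}_P({\bf X})$ lifts to $\widetilde\gamma\colon P\to{\rm End}_P(\eta_{\bf X})$, and postcomposing $\widetilde\gamma$ with the projection to ${\rm End}_P(L{\bf X})$ endows $L{\bf X}$ with a $P$\nobreakdash-algebra structure for which, by commutativity of the pullback square, $\eta_{\bf X}$ is a map of $P$\nobreakdash-algebras.

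The core of the argument is thus the levelwise claim. On a tuple $(c_1,\ldots,c_n;c)$ with $P(c_1,\ldots,c_n;c)$ empty both collections are $0$; if $c\in J$, then, $J$ being an ideal relative to $P$, all the $c_i$ lie in $J$ as well, so every functor $L_c,L_{c_1},\ldots,L_{c_n}$ is the identity and the map at that tuple is an identity. If $c\notin J$, the map at $(c_1,\ldots,c_n;c)$ is
\[
\Map\bigl(L_{c_1}X(c_1)\otimes\cdots\otimes L_{c_n}X(c_n),\,LX(c)\bigr)\longrightarrow\Map\bigl(X(c_1)\otimes\cdots\otimes X(c_n),\,LX(c)\bigr)
\]
induced by $(\eta_{\bf X})_{c_1}\otimes\cdots\otimes(\eta_{\bf X})_{c_n}$. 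By repeated use of the pushout\nobreakdash-product axiom (the unit $I$ is cofibrant, each $X(c_i)$ is cofibrant, and each component of $\eta_{\bf X}$ is a cofibration, possibly the identity), the two objects appearing as sources here are cofibrant and the map between them is a cofibration; by hypothesis it is moreover an $L$\nobreakdash-equivalence; and $LX(c)$ is fibrant and $L$\nobreakdash-local. Hence, by the SM7 axiom applied to this cofibration and the fibration $LX(c)\to\ast$, the displayed map is a fibration of simplicial sets; and, the sources being cofibrant and $LX(c)$ fibrant, it computes $\map\bigl((\eta_{\bf X})_{c_1}\otimes\cdots\otimes(\eta_{\bf X})_{c_n},LX(c)\bigr)$, which is a weak equivalence by the simplicial orthogonality of $L$\nobreakdash-equivalences and $L$\nobreakdash-local objects recalled in Section~\ref{holocs}. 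So it is a trivial fibration, as required. (Here the hypothesis on $(\eta_{\bf X})_{c_1}\otimes\cdots\otimes(\eta_{\bf X})_{c_n}$ plays the role that closedness played in Theorem~\ref{thmideal}: closedness forced tensor products of $L$\nobreakdash-equivalences to be $L$\nobreakdash-equivalences, whereas in a general monoidal model category this has to be assumed.)

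For homotopy uniqueness, let $\gamma_1,\gamma_2\colon P\to{\rm End}_P(L{\bf X})$ be two $P$\nobreakdash-algebra structures on $L{\bf X}$ each making $\eta_{\bf X}$ a map of $P$\nobreakdash-algebras. By the universal property of the pullback, each $\gamma_i$ corresponds to a morphism $\widetilde\gamma_i\colon P\to{\rm End}_P(\eta_{\bf X})$ lying over the fixed $\gamma$ along the trivial fibration ${\rm End}_P(\eta_{\bf X})\to{\rm End}_P({\bf X})$; thus $\widetilde\gamma_1$ and $\widetilde\gamma_2$ solve one and the same lifting problem for the cofibration $I_C\to P$ against that trivial fibration, and so they are (left and right) homotopic in the model category of $C$\nobreakdash-coloured operads. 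Postcomposing a homotopy with the projection to ${\rm End}_P(L{\bf X})$ shows that $\gamma_1$ and $\gamma_2$ coincide up to homotopy in the sense fixed just before the statement. I expect the main obstacle to be the levelwise step of the previous paragraph --- specifically, verifying that all objects in sight are cofibrant so that $\Map$ genuinely models the homotopy function complex, and checking carefully that the stated hypothesis on the iterated tensor products of the components of $\eta_{\bf X}$ is exactly the input needed there; the operadic bookkeeping (restriction to ${\rm End}_P$, ideals, passage to $\Sigma P$ if $P$ is non\nobreakdash-symmetric) is routine given Section~\ref{background} and Remark~\ref{nonsymmetric}.
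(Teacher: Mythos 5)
Your proposal is correct and follows essentially the same route as the paper's own proof: reduce to showing that ${\rm End}_P(L{\bf X})\longrightarrow{\rm Hom}_P({\bf X},L{\bf X})$ is a levelwise trivial fibration (using the ideal property, the pushout-product/SM7 axioms, cofibrancy of the $X(c)$, and simplicial orthogonality against the $L$-local $LX(c)$), then lift the structure map of ${\bf X}$ through the pulled-back trivial fibration ${\rm End}_P(\eta_{\bf X})\to{\rm End}_P({\bf X})$ using cofibrancy of $P$, with uniqueness from the homotopy uniqueness of such lifts. The only differences are expository (you spell out why $\Map$ computes the homotopy function complex, which the paper leaves implicit).
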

\begin{proof}
We first check that the morphism of $C$\nobreakdash-coloured collections
\[
\End\nolimits_{P}(L{\bf X})\longrightarrow \Hom\nolimits_{P}({\bf X}, L{\bf X})
\]
induced by $\eta_{{\bf X}}$ is a trivial fibration, i.e., a trivial fibration of
simplicial sets or $k$\nobreakdash-spaces for every
$(c_1,\ldots,c_n;c)$. Since the value of both these $C$\nobreakdash-coloured collections on
$(c_1,\ldots,c_n; c)$ is the empty set whenever $P(c_1,\ldots,c_n; c)$ is the empty set, we may exclude
these cases from the argument.
Now, for all $(c_1,\ldots, c_n;c)$ such that $P(c_1,\ldots,c_n;c)$ is nonempty, the map
\[
(\eta_{\bf X})_{c_1}\otimes\cdots\otimes (\eta_{\bf X})_{c_n}\colon
X(c_1)\otimes\cdots\otimes X(c_n)\longrightarrow L_{c_1}X(c_1)\otimes\cdots\otimes L_{c_n}X(c_n)
\]
is an $L$\nobreakdash-equivalence by assumption.
It is also a cofibration, since, in any monoidal model category,
the tensor product of two cofibrations with cofibrant domains is a cofibration.
Hence, the map
\[
\xymatrix{
\Map(L_{c_1}X(c_1)\otimes\cdots\otimes L_{c_n}X(c_n), L_cX(c))\ar[d] \\
\Map(X(c_1)\otimes\cdots\otimes X(c_n), L_cX(c))
}
\]
is a trivial fibration. Indeed, if $c\in J$ then $c_i\in J$ for all $i$ (since $J$
is an ideal) and therefore the map is the identity; and if $c\not\in J$,
then it is a weak equivalence since $L_cX(c)=LX(c)$ is $L$\nobreakdash-local, and it is a
fibration by Quillen's axiom~SM7. The $P$\nobreakdash-algebra structure on $L{\bf X}$ is now
obtained similarly as in Lemma \ref{lem01}, as follows.
Consider the $C$\nobreakdash-coloured operad ${\rm End}_{P}(\eta_{{\bf X}})$, obtained as the following pullback of
$C$\nobreakdash-coloured collections:
\begin{equation}
\label{mainpullback}
\xymatrix{
{\rm End}_{P}(\eta_{{\bf X}}) \ar@{.>}[r]^{\rho} \ar@{.>}[d]_{\tau} & {\rm End}_{P}(L{\bf X}) \ar[d] \\
{\rm End}_{P}({\bf X}) \ar[r] & {\rm Hom}_{P}({\bf X}, L{\bf X}).
}
\end{equation}
The morphism $\tau$ is a trivial fibration since
it is a pullback of a trivial fibration, and the coloured operad $P$ is cofibrant by hypothesis.
Hence there is a lifting
\[
\xymatrix{
& {\rm End}_{P}(\eta_{{\bf X}}) \ar[d] \\
P \ar[r] \ar@{.>}[ur] & {\rm End}_{P}({\bf X})
}
\]
where $P\longrightarrow {\rm End}_{P}({\bf X})$ is the given $P$\nobreakdash-algebra structure of ${\bf X}$. Now, composing
this lifting with the upper morphism $\rho$ in~(\ref{mainpullback})
gives a $P$\nobreakdash-algebra structure on $L{\bf X}$ such that $\eta_{{\bf X}}$ is a map of $P$\nobreakdash-algebras, as claimed.

For the uniqueness, suppose that we have two $P$\nobreakdash-algebra structures on $L{\bf X}$, which we denote by
$\gamma,\gamma'\colon P\longrightarrow{\rm End}_{P}(L{\bf X})$, and assume further that $\eta_{\bf X}$
is a map of $P$\nobreakdash-algebras for each
of them, meaning that $\gamma$ and $\gamma'$ factor through ${\rm End}_{P}(\eta_{\bf X})$.
Thus, let $\delta,\delta'\colon P\longrightarrow {\rm End}_{P}(\eta_{\bf X})$ be such that
$\gamma=\rho\circ\delta$ and $\gamma'=\rho\circ\delta'$, where $\rho$ is the upper morphism
in~(\ref{mainpullback}).
Since $\tau\circ\delta=\tau\circ\delta'$ and $\tau$ is a trivial fibration, it follows that
$\delta$ and $\delta'$ are left homotopic.
Since $P$ is cofibrant and ${\rm End}_{P}(L{\bf X})$ is fibrant, we obtain that, in fact,
$\gamma\simeq\gamma'$; see \cite[7.4.8]{Hir03}.
\end{proof}

This result also holds if the $C$\nobreakdash-coloured operad $P$ is non\nobreakdash-symmetric.
In this case, we can replace it by its symmetric version $\Sigma P$, since both yield the same class of algebras
(see Remark \ref{nonsymmetric}).

Moreover, Theorem~\ref{mainthm} is also true for topological $C$\nobreakdash-coloured operads without the assumption that
they admit a model structure (e.g., if one uses the Str{\o}m model category structure on $k$\nobreakdash-spaces).
For this, one has to assume that the $C$\nobreakdash-coloured operad $P$ given in the statement of
Theorem~\ref{mainthm} is ``cofibrant'' in the sense that it has the left lifting
property with respect to morphisms of $C$\nobreakdash-coloured operads yielding trivial
fibrations of spaces at each tuple of colours.

The assumption that $P$ is cofibrant as a $C$\nobreakdash-coloured operad is essential in
the proof of Theorem~\ref{mainthm}.
In order to obtain a similar result for arbitrary coloured operads,
one needs that the monoidal model category $\M$ allows rectification
of algebras over resolutions of coloured operads. According to \cite{EM06},
this holds when $\M$ is the category of symmetric spectra. We will use this fact
in Section~\ref{rectifications} to extend Theorem~\ref{mainthm} in the case of spectra.

\subsection{$A_{\infty}$\nobreakdash-spaces and $E_{\infty}$\nobreakdash-spaces}
\label{morespaces}
Let us specialize to the model category of simplicial sets acting on itself. Let $P$ be any
$C$\nobreakdash-coloured operad in simplicial sets and choose a cofibrant resolution $P_{\infty}\longrightarrow P$.
If $(L,\eta)$ is any homotopical localization, then
the product of any two $L$\nobreakdash-equivalences is an $L$\nobreakdash-equivalence by the argument used in~(\ref{shift}).
Hence, we can apply Theorem \ref{mainthm}. Therefore, if
${\bf X}=(X(c))_{c\in C}$ is any $P_{\infty}$\nobreakdash-algebra, then
$L{\bf X}$ is again a $P_{\infty}$\nobreakdash-algebra and
$\eta_{\bf X}\colon {\bf X}\longrightarrow L{\bf X}$ is a map of $P_{\infty}$\nobreakdash-algebras.
The same statements are true in the category of $k$\nobreakdash-spaces, although in this case
we need suitable cofibrancy assumptions on the spaces $X(c)$ for $c\in C$ for the validity of the argument,
if the Quillen model structure is used.

In particular, this result applies to the operads $\Ass$ and $\Com$,
yielding the following result. Recall that an \textit{$A_{\infty}$\nobreakdash-space} is
an algebra over an arbitrary but fixed cofibrant resolution of $\Ass$, and an \textit{$E_{\infty}$\nobreakdash-space}
is an algebra over a cofibrant resolution of $\Com$.
Analogously, as defined in Subsection~\ref{pinftymaps},
by an \textit{$A_{\infty}$\nobreakdash-map} we mean an algebra over a cofibrant resolution
of $\Mor_{\Ass}$, and by an \textit{$E_{\infty}$\nobreakdash-map} we mean an algebra over a cofibrant resolution
of $\Mor_{\Com}$.

\begin{cor}
Let $(L,\eta)$ be a homotopical localization on the category of simplicial sets
or $k$\nobreakdash-spaces.
If $X$ is a cofibrant $A_{\infty}$\nobreakdash-space, then $LX$ has a homotopy unique $A_{\infty}$\nobreakdash-space
structure such that $\eta_X\colon X\longrightarrow LX$ is a map of $A_{\infty}$\nobreakdash-spaces.
Moreover, if $g$ is an $A_{\infty}$\nobreakdash-map between cofibrant $A_{\infty}$\nobreakdash-spaces,
then $Lg$ is also an $A_{\infty}$\nobreakdash-map.
The same statements are true for $E_{\infty}$\nobreakdash-spaces and $E_{\infty}$\nobreakdash-maps.
$\hfill\qed$
\label{ainfinityspc}
\end{cor}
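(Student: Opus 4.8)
The plan is to deduce Corollary~\ref{ainfinityspc} directly from Theorem~\ref{mainthm} together with the discussion in Subsection~\ref{pinftymaps}, by making the appropriate choices of coloured operad and ideal. First I would observe that for the first statement we take $P$ to be the chosen cofibrant resolution of the associative operad $\Ass$, viewed as a one\nobreakdash-coloured operad, and take the ideal $J=\emptyset$. Since $\Com(n)=I$ for all $n$, there are no empty terms, so the hypothesis of Theorem~\ref{mainthm} on the tensor products $(\eta_{\bf X})_{c_1}\otimes\cdots\otimes(\eta_{\bf X})_{c_n}$ reduces to requiring that a finite cartesian product of copies of $\eta_X$ be an $L$\nobreakdash-equivalence; but this is exactly the conclusion of the computation~(\ref{shift}) applied in the cartesian monoidal category of simplicial sets (or $k$\nobreakdash-spaces), where it holds because every homotopical localization on spaces is closed. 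With $X$ assumed cofibrant, Theorem~\ref{mainthm} then gives the homotopy unique $A_{\infty}$\nobreakdash-space structure on $LX$ for which $\eta_X$ is a map of $A_{\infty}$\nobreakdash-spaces. The statement for $E_{\infty}$\nobreakdash-spaces is identical, replacing $\Ass$ by $\Com$.

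Next I would treat the assertion about maps. An $A_{\infty}$\nobreakdash-map $g$, by the definition recalled just before the corollary, is precisely an algebra over a cofibrant resolution $(\Mor_{\Ass})_{\infty}$ of the two\nobreakdash-coloured operad $\Mor_{\Ass}$ (with colour set $D=\{0,1\}$). I would apply Theorem~\ref{mainthm} to this cofibrant $D$\nobreakdash-coloured operad, again with empty ideal $J=\emptyset$; the hypothesis on tensor products is verified exactly as above since $\Mor_{\Ass}$ has no empty components and we are in the cartesian setting. The input algebra ${\bf X}=(X_0,X_1)$ consists of the two underlying $A_{\infty}$\nobreakdash-spaces, which are cofibrant by hypothesis, so Theorem~\ref{mainthm} yields a homotopy unique $(\Mor_{\Ass})_{\infty}$\nobreakdash-algebra structure on $(LX_0,LX_1)$ for which $\eta_{\bf X}$ is a map of such algebras; that is, $Lg$ is again an $A_{\infty}$\nobreakdash-map. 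Restricting the colours along the two inclusions $\{*\}\hookrightarrow D$ (as in~(\ref{rhois})) identifies the two induced $A_{\infty}$\nobreakdash-space structures with those produced in the first part, by the uniqueness clause of Theorem~\ref{mainthm}, so everything is compatible. Once more the $E_{\infty}$ case is the same with $\Mor_{\Com}$ in place of $\Mor_{\Ass}$.

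The only point that requires a moment of care --- and hence the place I would expect a referee to look --- is checking the tensor\nobreakdash-product hypothesis of Theorem~\ref{mainthm} in the topological setting, since on $k$\nobreakdash-spaces with the Quillen model structure one must know that a finite product of the cofibrations $\eta_{X(c)}$ with cofibrant domains is still a cofibration and still an $L$\nobreakdash-equivalence. The cofibration part is the standard fact (already invoked in the proof of Theorem~\ref{mainthm}) that the monoidal product of cofibrations with cofibrant domains is a cofibration; the $L$\nobreakdash-equivalence part is the computation in~(\ref{shift}), valid because homotopical localizations of spaces are closed in the sense required. This is exactly why the cofibrancy assumption on $X$ (and on the two $A_{\infty}$\nobreakdash-spaces underlying $g$) appears in the statement, and with that assumption in place the corollary follows with no further work.
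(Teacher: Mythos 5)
Your proposal is correct and follows essentially the same route as the paper: the corollary is obtained by applying Theorem~\ref{mainthm} with $J=\emptyset$ to cofibrant resolutions of $\Ass$, $\Com$, $\Mor_{\Ass}$, and $\Mor_{\Com}$, using the argument of~(\ref{shift}) to see that finite products of $L$\nobreakdash-equivalences are $L$\nobreakdash-equivalences (which is why no connectivity-type hypotheses appear in the unstable case), together with the cofibrancy caveat for $k$\nobreakdash-spaces. The only blemish is the slip where you write ``$\Com(n)=I$'' while treating the $\Ass$ case; the relevant point, as you intend, is simply that all components of $\Ass$ (and hence of its cofibrant resolution) are nonempty.
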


As explained in the Introduction, the following is a consequence of Corollary~\ref{ainfinityspc},
using the fact that every loop space is an $A_{\infty}$\nobreakdash-space, and, conversely, every $A_{\infty}$\nobreakdash-space $X$
for which the monoid of connected components $\pi_0(X)$ is a group
is weakly equivalent to a loop space,
namely $\Omega BX$, where $B$ denotes the classifying space functor;
cf.\ \cite{Sta63}, \cite[Theorem~1.26]{BV73}, \cite{May74}.

\begin{cor}
If $(L,\eta)$ is a homotopical localization on the category of simplicial sets
or $k$\nobreakdash-spaces, and $X$ is a loop space, then $LX$
is naturally weakly equivalent to a loop space and the localization map
$\eta_X\colon X\longrightarrow LX$ is naturally weakly equivalent to a loop~map.
Moreover, if $g\colon X\longrightarrow Y$ is a loop map between loop spaces,
then $Lg$ is naturally weakly equivalent to a loop map.
\end{cor}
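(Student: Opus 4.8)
The plan is to derive this corollary from Corollary~\ref{ainfinityspc}, following the same pattern by which the preceding corollary on loop spaces was deduced from the preceding $A_\infty$-statement, but now keeping track of the morphism. First I would recall that a loop map $g\colon X\longrightarrow Y$ between loop spaces is, up to natural weak equivalence, an $A_\infty$-map between group-like $A_\infty$-spaces: indeed, if $X\simeq\Omega BX$ and $Y\simeq\Omega BY$, a loop map is (up to homotopy) of the form $\Omega h$ for $h\colon BX\longrightarrow BY$, and such a map underlies an $A_\infty$-map structure via the $W$-construction lifting~(\ref{lifting}). Passing to cofibrant replacements (which does not change homotopy type, since $\map(-,-)$ is homotopy-invariant and $L$ preserves weak equivalences), we may assume $X$, $Y$ are cofibrant $A_\infty$-spaces and $g$ is an $A_\infty$-map between them.

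Next I would apply Corollary~\ref{ainfinityspc}: $LX$ and $LY$ carry homotopy unique $A_\infty$-space structures with $\eta_X$, $\eta_Y$ maps of $A_\infty$-spaces, and $Lg$ is an $A_\infty$-map between the cofibrant $A_\infty$-spaces $LX$, $LY$ (replacing $LX$, $LY$ by cofibrant models if necessary, which is harmless). Then I would invoke the standing convention from Section~\ref{holocs} that $\eta$ induces an isomorphism $\pi_0(Z)\cong\pi_0(LZ)$ for every space $Z$; since $\pi_0(X)$ and $\pi_0(Y)$ are groups (as $X$, $Y$ are loop spaces), it follows that $\pi_0(LX)$ and $\pi_0(LY)$ are groups, so $LX$ and $LY$ are group-like $A_\infty$-spaces. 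By the delooping theorems cited before the corollary (Stasheff, Boardman--Vogt, May), $LX$ is naturally weakly equivalent to $\Omega B(LX)$ and similarly for $LY$, and an $A_\infty$-map between group-like $A_\infty$-spaces induces a map of classifying spaces $B(LX)\longrightarrow B(LY)$, whose loop is weakly equivalent to $Lg$ as a map. Hence $Lg$ is naturally weakly equivalent to a loop map, and $\eta_X$, $\eta_Y$ being maps of $A_\infty$-spaces between group-like objects are likewise naturally weakly equivalent to loop maps, giving a commutative square up to homotopy that exhibits the whole localization map $g\longrightarrow Lg$ as weakly equivalent (in the category of maps) to a loop map.

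The word \emph{naturally} deserves care: naturality should be understood with respect to the homotopy category of (loop spaces, loop maps), and it comes from the functoriality of the $W$-construction, of $L$, and of the classifying-space functor $B$, all of which are functorial on the nose or at least up to coherent homotopy. I would phrase the conclusion so that the zig-zag of weak equivalences connecting $g$ (resp.\ $Lg$) to its $A_\infty$-model is natural, and then observe that applying $\Omega B$ to the induced map of classifying spaces produces the desired natural weak equivalence of maps; the reader is referred to the same sources for the compatibility of the delooping with maps. The main obstacle I anticipate is precisely this bookkeeping of naturality across the several functorial constructions involved (cofibrant replacement, $L$, $W$-construction, $B$, $\Omega$) rather than any genuinely new homotopy-theoretic input; everything substantive has already been done in Corollary~\ref{ainfinityspc} and in the classical recognition principle for loop spaces.
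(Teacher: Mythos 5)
Your argument is correct and follows essentially the same route as the paper's own proof: pass to cofibrant replacements, apply Corollary~\ref{ainfinityspc} to get the $A_{\infty}$-structures on $LX$, $LY$ and the $A_{\infty}$-map structure on $Lg$, use the standing convention that $\eta$ preserves $\pi_0$ to conclude group-likeness, and then deloop via the classifying space functor. The paper is just slightly more economical in recording naturality (through the single diagram~(\ref{locloop}) and the functoriality of $Q$, $L$, $B$), but the substance is the same.
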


\begin{proof}
Let $Q$ be a cofibrant replacement functor, so that $QX\longrightarrow X$
is a trivial fibration and $QX$ is cofibrant. Here $X$ is an $A_{\infty}$\nobreakdash-space and,
by homotopy invariance, $QX$ is also an $A_{\infty}$\nobreakdash-space (see \cite[Theorem~4.58]{BV73}, \cite[Theorem~3.5.b]{BM03}).
Therefore, by~Corollary~\ref{ainfinityspc}, $LQX$ is an $A_{\infty}$\nobreakdash-space and $\eta_{QX}$ is a map of $A_{\infty}$\nobreakdash-spaces.
Since $\pi_0(LQX)\cong\pi_0(X)$ is a group, we may apply the classifying space functor, hence obtaining a commutative diagram
\begin{equation}
\label{locloop}
\xymatrix{
X \ar[d]_{\eta_X} & QX \ar[l]_{\simeq} \ar[d]^{\eta_{QX}} \ar[r]^{\simeq} & \Omega BQX \ar[d]^{\Omega B\eta_{QX}} \\
LX & LQX \ar[l]_{\simeq} \ar[r]^{\simeq} & \Omega BLQX.
}
\end{equation}

To prove the third claim, view $g$ as an $A_{\infty}$\nobreakdash-map. Then $Qg$ is also an $A_{\infty}$\nobreakdash-map,
and, by Corollary~\ref{ainfinityspc}, $LQg$ is an $A_{\infty}$\nobreakdash-map, hence
naturally weakly equivalent (as a functor on~$g$) to a loop map between loop spaces.
\end{proof}

Essentially the same result was obtained in
\cite[Theorem~3.1]{Bou94} for nullifications and in \cite[Lemma~A.3]{Far96}
for $f$\nobreakdash-localizations, using Segal's theory of loop spaces.
As we next show, their delooping of $L_f\Omega$
coincides, up to homotopy, with the one given by~(\ref{locloop}).

\begin{prop}
\label{flocalizationofloops}
Let $f$ be any map. Then $L_f\Omega Y\simeq \Omega L_{\Sigma f} Y$ for all~$Y$.
\end{prop}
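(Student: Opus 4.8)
The plan is to identify $\Omega L_{\Sigma f}Y$ with $L_f(\Omega Y)$ by checking the two defining properties of a localization: that $\Omega L_{\Sigma f}Y$ is $L_f$\nobreakdash-local, and that the natural comparison map $\Omega Y\longrightarrow\Omega L_{\Sigma f}Y$ is an $L_f$\nobreakdash-equivalence. One may assume $Y$ connected, since $\Omega Y$ and $\Omega L_{\Sigma f}Y$ depend only on the basepoint components of $Y$ and of $L_{\Sigma f}Y$, and the latter component is $L_{\Sigma f}$ of the component of $Y$ (here $\Sigma f$ is a suspension, hence a map between connected spaces, so all discrete spaces are $L_{\Sigma f}$\nobreakdash-local and $L_{\Sigma f}$ preserves $\pi_0$). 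The first property is then formal: the loop--suspension adjunction on homotopy function complexes gives $\map(\Sigma f,W)\simeq\map(f,\Omega W)$ naturally in $W$, so $W$ is $L_{\Sigma f}$\nobreakdash-local if and only if $\Omega W$ is $L_f$\nobreakdash-local; in particular $\Omega L_{\Sigma f}Y$ is $L_f$\nobreakdash-local.

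For the comparison map I would pass through classifying spaces, which also makes contact with the delooping produced inside the proof of Corollary~\ref{ainfinityspc} in~(\ref{locloop}). By Corollary~\ref{ainfinityspc}, after a cofibrant replacement the space $L_f(\Omega Y)$ carries an $A_\infty$\nobreakdash-structure for which $\eta\colon\Omega Y\longrightarrow L_f(\Omega Y)$ is a map of $A_\infty$\nobreakdash-spaces, and it is group\nobreakdash-like since $\pi_0(L_f\Omega Y)\cong\pi_0(\Omega Y)\cong\pi_1(Y)$ is a group. Applying the classifying space functor $B$, and using $B\Omega Y\simeq Y$ for connected $Y$ together with $\Omega BZ\simeq Z$ for group\nobreakdash-like $A_\infty$\nobreakdash-spaces $Z$ (see \cite{Sta63}, \cite[Theorem~1.26]{BV73}, \cite{May74}), we obtain a natural map $\bar\eta\colon Y\simeq B\Omega Y\longrightarrow B L_f(\Omega Y)$. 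Its target is $L_{\Sigma f}$\nobreakdash-local by the first paragraph, since $\Omega B L_f(\Omega Y)\simeq L_f(\Omega Y)$ is $L_f$\nobreakdash-local. Hence, once we know that $\bar\eta$ is an $L_{\Sigma f}$\nobreakdash-equivalence, it exhibits $B L_f(\Omega Y)$ as $L_{\Sigma f}Y$, and applying $\Omega$ gives the desired natural equivalence $L_f(\Omega Y)\simeq\Omega B L_f(\Omega Y)\simeq\Omega L_{\Sigma f}Y$.

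The heart of the argument, and the step I expect to be the main obstacle, is thus to show that $B$ sends the $L_f$\nobreakdash-equivalence $\eta$ to an $L_{\Sigma f}$\nobreakdash-equivalence. I would do this using the skeletal filtration of the bar construction: for a well\nobreakdash-pointed $A_\infty$\nobreakdash-space $Z$ one has $\mathrm{sk}_0BZ\simeq *$ and $\mathrm{sk}_kBZ/\mathrm{sk}_{k-1}BZ\simeq\Sigma^k(Z^{\wedge k})\cong(\Sigma Z)^{\wedge k}$. First, if $\phi$ is an $L_f$\nobreakdash-equivalence then $\Sigma\phi$ is an $L_{\Sigma f}$\nobreakdash-equivalence, again by $\map(\Sigma\phi,W)\simeq\map(\phi,\Omega W)$ and the first paragraph; and smash powers of $L_{\Sigma f}$\nobreakdash-equivalences are $L_{\Sigma f}$\nobreakdash-equivalences, because products of $L_{\Sigma f}$\nobreakdash-equivalences are such (the argument of~(\ref{shift}); see Subsection~\ref{morespaces}) and the fat\nobreakdash-wedge inclusion into a $k$\nobreakdash-fold product is an iterated homotopy pushout of lower products, so the smash power, being the cofibre, inherits the property. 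Hence $(\Sigma\phi)^{\wedge k}$ is an $L_{\Sigma f}$\nobreakdash-equivalence for all $k$. Now, for any $L_{\Sigma f}$\nobreakdash-local $W$, the filtration gives a tower of fibrations $\{\map(\mathrm{sk}_kBZ,W)\}_k$ whose successive fibres are $\map((\Sigma Z)^{\wedge k},W)$, and on which $\map(B\phi,W)$ restricts to $\map((\Sigma\phi)^{\wedge k},W)$, a weak equivalence by the above (and it is trivially a weak equivalence on the bottom term $\map(*,W)$). By induction along the tower and passing to the homotopy limit, $\map(B\phi,W)$ is a weak equivalence for every $L_{\Sigma f}$\nobreakdash-local $W$, i.e.\ $B\phi$ is an $L_{\Sigma f}$\nobreakdash-equivalence, as required.

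All equivalences used are natural in $Y$, so the resulting identification $L_f(\Omega Y)\simeq\Omega L_{\Sigma f}Y$ is natural; moreover $B L_f(\Omega Y)$ is, by construction, the delooping of $L_f(\Omega Y)$ furnished by~(\ref{locloop}) applied with $X=\Omega Y$, so the argument also shows that this delooping agrees up to homotopy with $L_{\Sigma f}Y$, which is the one implicit in \cite[Theorem~3.1]{Bou94} and \cite[Lemma~A.3]{Far96}. A more conceptual proof of the key fact in the third paragraph uses the delooping adjunction $\map(BZ,W)\simeq\map_{A_\infty}(Z,\Omega W)$ for group\nobreakdash-like $Z$: for $L_{\Sigma f}$\nobreakdash-local $W$ the $A_\infty$\nobreakdash-space $\Omega W$ is $L_f$\nobreakdash-local, and Corollary~\ref{ainfinityspc} shows that $L_f$\nobreakdash-equivalences and $L_f$\nobreakdash-local objects among $A_\infty$\nobreakdash-spaces are detected on underlying spaces, whence $\map(B\phi,W)\simeq\map_{A_\infty}(\phi,\Omega W)$ is a weak equivalence.
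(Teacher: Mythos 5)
Your proof is correct, but it takes a genuinely different route from the paper's. The paper never verifies directly that the coaugmentation into the delooping is a $\Sigma f$\nobreakdash-equivalence: starting from~(\ref{locloop}) it sets $F=BL_fQ\Omega$, observes that $(F,\lambda)$ is an idempotent coaugmented functor on the homotopy category of connected spaces, identifies the $F$\nobreakdash-local objects with the $L_{\Sigma f}$\nobreakdash-local ones via the same loop--suspension adjunction you use in your first paragraph, and concludes $F\simeq L_{\Sigma f}$ by uniqueness of a localization with a prescribed class of local objects, treating disconnected $Y$ by passing to basepoint components exactly as you do. You instead check the universal property by hand: $BL_f(\Omega Y)$ is $\Sigma f$\nobreakdash-local because its loops are $f$\nobreakdash-local, and $Y\simeq B\Omega Y\longrightarrow BL_f(\Omega Y)$ is a $\Sigma f$\nobreakdash-equivalence because $B$ sends $f$\nobreakdash-equivalences of group-like $A_\infty$\nobreakdash-spaces to $\Sigma f$\nobreakdash-equivalences, proved via the bar filtration together with the facts that $\Sigma$ turns $f$\nobreakdash-equivalences into $\Sigma f$\nobreakdash-equivalences and that $\Sigma f$\nobreakdash-equivalences are closed under finite products, homotopy pushouts, and hence smash powers. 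This is essentially the classical Bousfield--Farjoun argument alluded to just before the proposition (bar filtration in place of Segal's machinery); it is longer but more explicit, and it exhibits $BL_f\Omega Y$ as a model for $L_{\Sigma f}Y$ directly, which in the paper emerges only a posteriori. Two points to tighten: the identification $\mathrm{sk}_kBZ/\mathrm{sk}_{k-1}BZ\simeq(\Sigma Z)^{\wedge k}$ is literally available for the bar construction of a well-pointed monoid, so rectify the $A_\infty$\nobreakdash-structure first (or use the corresponding quotients of the $A_\infty$ bar construction); and in the inductive step the fibres of $\map(\mathrm{sk}_kBZ,W)\longrightarrow\map(\mathrm{sk}_{k-1}BZ,W)$ over non-constant maps are extension spaces rather than copies of $\map((\Sigma Z)^{\wedge k},W)$, so it is cleaner to run the induction on the skeletal homotopy pushout squares themselves, using the closure of $\Sigma f$\nobreakdash-equivalences under homotopy pushouts that you already invoke for the fat wedge.
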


\begin{proof}
It follows from (\ref{locloop}) that $L_f\Omega Y\simeq \Omega FY$,
where $F$ is a functor, namely $F=BL_fQ\Omega$.
Note that there is a natural transformation
$\zeta\colon BQ\Omega\longrightarrow F$ and there is also a natural isomorphism
$\xi\colon BQ\Omega\longrightarrow {\rm Id}$ on the
homotopy category of connected spaces. It follows that, if $\lambda=\zeta\circ\xi^{-1}$,
then $(F,\lambda)$ is a localization on this category. On one hand, a connected
space $Y$ is $F$\nobreakdash-local if and only if
$\Omega Y$ is $L_f$\nobreakdash-local. On the other hand,
$\Omega Y$ is $L_f$\nobreakdash-local if and only if it
is simplicially orthogonal to~$f$, and this happens if and only if $Y$ is simplicially orthogonal
to~$\Sigma f$. Hence, $F$ and $L_{\Sigma f}$ are localizations on the same
category with the same class of local objects, from which it follows that
there is a homotopy equivalence $FY\simeq L_{\Sigma f}Y$ under $Y$,
for all connected spaces~$Y$.

If $Y$ is not connected, then we take
the basepoint component $Y_0$ and have
\[
L_f\Omega Y=L_f\Omega Y_0\simeq \Omega L_{\Sigma f}Y_0=\Omega (L_{\Sigma f}Y)_0=\Omega L_{\Sigma f}Y,
\]
hence completing the proof.
\end{proof}

The preservation of infinite loop spaces and infinite loop maps under homotopical
localizations follows either iteratively or by repeating the above arguments
with $E_{\infty}$ instead of $A_{\infty}$.

\subsection{$A_{\infty}$\nobreakdash-structures and $E_{\infty}$\nobreakdash-structures on spectra}
\label{nowspectra}
Now let $\M$ be the category of symmetric spectra over simplicial sets.
In order to handle commutative ring spectra and their modules conveniently,
we endow $\M$ with the {\it positive} stable model structure,
which was discussed in \cite{MMSS01}, \cite{Sch01}, or \cite{Shi04}.
Thus, weak equivalences in $\M$ are the usual stable weak equivalences (as defined in \cite{HSS00}),
and positive cofibrations are stable cofibrations as in \cite{HSS00} with the additional
assumption that they~are isomorphisms in level zero. Positive fibrations are defined
by the right lifting property with respect to the trivial positive cofibrations.
By \cite[Proposition 3.1]{Shi04},
the category of symmetric spectra over simplicial sets with the positive stable
model structure is a cofibrantly generated, proper, monoidal model category,
and so is the category of $R$\nobreakdash-modules for every ring spectrum~$R$.

A spectrum $X$ is called {\it connective}
if it is $(-1)$\nobreakdash-connected, i.e., $\pi_k(X)=0$ for $k<0$.
If $\map(-,-)$ is any homotopy function complex in $\M$, then
\[
\pi_n\map(X,Y)\cong [\Sigma^nX, Y]\cong \pi_n F(X,Y)
\]
for all spectra $X$, $Y$ and $n\ge 0$, where $F(-,-)$ denotes
the derived function spectrum; cf.\ \cite[Lemma~6.1.2]{Hov99}.
In other words, the simplicial
set $\map(X,Y)$ has the same homotopy groups (with any choice of a basepoint) as the
connective cover $F^c(X,Y)$ of the spectrum $F(X,Y)$.
Hence, if $L$ is a homotopical localization on~$\M$, then
a map $f\colon X\longrightarrow Y$ is an $L$\nobreakdash-equivalence if and only if
\begin{equation}
\label{Fc}
F^c(f,Z)\colon F^c(Y,Z)\longrightarrow F^c(X,Z)
\end{equation}
is a weak equivalence of spectra for every $L$\nobreakdash-local spectrum $Z$.

As a consequence of this fact, \emph{the smash product of two $L$\nobreakdash-equivalences
need not be an $L$\nobreakdash-equivalence}, but it
is so if any one of the sufficient conditions stated in Theorem~\ref{f1smashf2}
is satisfied; cf.\ \cite{CG05}.

We say that the functor $L$ \emph{commutes with suspension\/} if
$L\Sigma X\simeq \Sigma LX$ for all~$X$.
Note that, by (\ref{Fc}), if $f$ is any $L$\nobreakdash-equivalence,
then so is~$\Sigma f$. Therefore, for every spectrum~$X$, the map
$\Sigma\eta_X\colon \Sigma X\longrightarrow \Sigma LX$ is an $L$\nobreakdash-equivalence.
For $X$ cofibrant, this yields a map (in fact, an $L$\nobreakdash-equivalence)
\[
g_X\colon\Sigma LX\longrightarrow L\Sigma X,
\]
unique up to homotopy, such that $g_X\circ \Sigma\eta_X \simeq \eta_{\Sigma X}$.
It is natural to say that $L$ commutes with suspension if $g_X$ is a weak
equivalence for all (cofibrant)~$X$. However, this is equivalent to the condition that
$\Sigma LX$ be weakly equivalent to an $L$\nobreakdash-local spectrum for all~$X$, and hence to the condition that 
$L\Sigma X\simeq \Sigma LX$ for all~$X$.

If $L$ commutes with suspension and $Z$ is $L$\nobreakdash-local, then (a fibrant replacement of)
$\Sigma^n Z$ is also $L$\nobreakdash-local, not only for $n\le 0$, but also for $n>0$.
From this fact it follows that a map $f$ is an $L$\nobreakdash-equivalence
if and only if $F(f,Z)$ is a weak equivalence for every $L$\nobreakdash-local spectrum $Z$.
Hence, the condition that $L$ commutes with suspension holds if
and only if $L$ is \emph{closed} on $\Ho(\M)$ in the sense of Section~\ref{preservation} above.

\begin{thm}
\label{f1smashf2}
Let $(L,\eta)$ be a homotopical localization on symmetric spectra.
Let $f_1\colon X_1\longrightarrow Y_1$ and $f_2\colon X_2\longrightarrow Y_2$
be $L$\nobreakdash-equivalences.
Suppose that any one of the following conditions is satisfied:
\begin{itemize}
\item[{\rm (i)}] $L$ commutes with suspension.
\item[{\rm (ii)}] $X_1$ and $Y_2$ are connective.
\item[{\rm (iii)}] $f_1$ is a weak equivalence between connective spectra.
\end{itemize}
Then the derived smash product $f_1\wedge f_2\colon X_1\wedge X_2\longrightarrow Y_1\wedge Y_2$ 
is an $L$\nobreakdash-equivalence.
\end{thm}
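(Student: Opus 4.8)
The plan is to work throughout with the characterisation~(\ref{Fc}): a map $f$ is an $L$\nobreakdash-equivalence if and only if $F^{c}(f,Z)=\tau_{\ge 0}F(f,Z)$ is a weak equivalence of spectra for every $L$\nobreakdash-local spectrum~$Z$. One must be careful \emph{not} to reduce the statement to ``the homotopy cofibre of $f_{1}\wedge f_{2}$ is $L$\nobreakdash-acyclic'': because homotopy function complexes see only nonnegative homotopy groups, the class of $L$\nobreakdash-equivalences is not characterised by the $L$\nobreakdash-acyclicity of the cofibre once $L$ fails to commute with suspension (witness $S\longrightarrow 0$ with $L$ a Postnikov section). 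Instead I would factor
\[
f_{1}\wedge f_{2}=(f_{1}\wedge\mathrm{id}_{Y_{2}})\circ(\mathrm{id}_{X_{1}}\wedge f_{2}),
\]
and, using that $L$\nobreakdash-equivalences are closed under composition and that the smash product is symmetric, reduce the whole theorem to the single assertion that $\mathrm{id}_{V}\wedge g$ is an $L$\nobreakdash-equivalence whenever $g$ is an $L$\nobreakdash-equivalence and $V$ satisfies the hypothesis appropriate to the case at hand.

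The technical core is a lemma: \emph{if $V$ is a connective spectrum and $g\colon U\longrightarrow U'$ is an $L$\nobreakdash-equivalence, then $\mathrm{id}_{V}\wedge g$ is an $L$\nobreakdash-equivalence}. To prove it I would use the adjunction $F(V\wedge W,Z)\cong F(V,F(W,Z))$ to identify $F(\mathrm{id}_{V}\wedge g,Z)$ with $F(V,-)$ applied to $F(g,Z)$, and then exploit that, for a connective~$V$, the functor $\tau_{\ge 0}F(V,-)$ sends the canonical map $\tau_{\ge 0}W\longrightarrow W$ to a weak equivalence for every spectrum~$W$; this is because its cofibre $\tau_{\le -1}W$ has vanishing nonnegative homotopy, so $F(V,\tau_{\le -1}W)$ is coconnective and is annihilated by $\tau_{\ge 0}$. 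Feeding in that $\tau_{\ge 0}F(g,Z)$ is a weak equivalence (again~(\ref{Fc})) and applying two\nobreakdash-out\nobreakdash-of\nobreakdash-three in the resulting commutative square shows that $F^{c}(\mathrm{id}_{V}\wedge g,Z)$ is a weak equivalence. This disposes of cases~(ii) and~(iii): in~(ii) both $X_{1}$ and $Y_{2}$ are connective, so the lemma (with symmetry) applies to each of the two factors of $f_{1}\wedge f_{2}$; in~(iii) the factor $f_{1}\wedge\mathrm{id}_{Y_{2}}$ is already a weak equivalence, hence an $L$\nobreakdash-equivalence, since the derived smash product preserves weak equivalences, and $X_{1}$ is connective, so the lemma covers the remaining factor $\mathrm{id}_{X_{1}}\wedge f_{2}$.

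For case~(i) I would use that ``$L$ commutes with suspension'' is equivalent to $L$ being closed on $\Ho(\M)$, so that $F(g,Z)$ is a genuine weak equivalence (not merely on connective covers) for every $L$\nobreakdash-equivalence $g$ and $L$\nobreakdash-local~$Z$; moreover $F(W,Z)$ is then $L$\nobreakdash-local for every~$W$, which follows from the natural isomorphism $F(g,F(W,Z))\cong F(W,F(g,Z))$ exhibiting $F(W,Z)$ as simplicially orthogonal to every $L$\nobreakdash-equivalence. Writing $F(f_{1}\wedge f_{2},Z)$, via the adjunction, as the composite
\[
F(Y_{1},F(Y_{2},Z))\longrightarrow F(Y_{1},F(X_{2},Z))\longrightarrow F(X_{1},F(X_{2},Z)),
\]
the first arrow is a weak equivalence because $F(f_{2},Z)$ is one and $F(Y_{1},-)$ preserves weak equivalences, and the second because $f_{1}$ is an $L$\nobreakdash-equivalence and $F(X_{2},Z)$ is $L$\nobreakdash-local; hence $F(f_{1}\wedge f_{2},Z)$ is a weak equivalence, and since in this case a map is an $L$\nobreakdash-equivalence as soon as $F(-,Z)$ sends it to a weak equivalence for every $L$\nobreakdash-local~$Z$, the map $f_{1}\wedge f_{2}$ is an $L$\nobreakdash-equivalence.

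The point to be most careful about — and the reason the three hypotheses are needed — is the boundary phenomenon at degree zero: a map of cofibre sequences with $L$\nobreakdash-equivalences at the two outer positions need not have an $L$\nobreakdash-equivalence in the middle, because the five lemma fails on $\pi_{0}$; equivalently, $L$\nobreakdash-equivalences are not closed under desuspension. Each of the three arguments above is engineered so as never to form the cofibre of an $L$\nobreakdash-equivalence: hypothesis~(i) trades the connective cover for a genuine weak equivalence of function spectra, while (ii)--(iii) replace one smash factor by a connective spectrum~$V$, for which the functor $\tau_{\ge 0}F(V,-)$ is insensitive to the offending negative\nobreakdash-degree homotopy.
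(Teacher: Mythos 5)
Your proposal is correct and follows essentially the same route as the paper: case (i) is the closed/adjunction argument of~(\ref{shift}) (you verify locality of $F(X_2,Z)$ explicitly where the paper invokes symmetry), and cases (ii)--(iii) are Bousfield's factorization $f_1\wedge f_2=(f_1\wedge Y_2)\circ(X_1\wedge f_2)$ together with the fact that for connective $V$ one has $F^c(V,W)\simeq F^c(V,F^c \mbox{-replacement of } W)$, which is exactly the paper's step $F^c(X_1,F(f_2,Z))\simeq F^c(X_1,F^c(f_2,Z))$, only spelled out via the coconnective cofibre $\tau_{\le -1}W$.
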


\begin{proof}
If $L$ commutes with suspension, then $L$ is closed on $\Ho(\M)$ and
therefore we may use the same argument as in (\ref{shift}).

Now assume that the spectra $X_1$ and $Y_2$ are connective. The following argument
is due to Bousfield \cite{Bou99}. One first proves
that $X_1\wedge f_2$ is an $L$\nobreakdash-equivalence as follows. If $Z$ is any $L$\nobreakdash-local
spectrum, then
\[
F^c(X_1\wedge f_2,Z)\simeq F^c(X_1,F(f_2,Z))\simeq F^c(X_1,F^c(f_2,Z))
\]
since $X_1$ is connective. Here $F^c(f_2,Z)$ is a weak equivalence and this implies that
$F^c(X_1\wedge f_2,Z)$ is also a weak equivalence. Then the same method proves that $f_1\wedge Y_2$
is an $L$\nobreakdash-equivalence. Finally, $f_1\wedge f_2=(f_1\wedge Y_2)\circ (X_1\wedge f_2)$,
and the argument is complete. 
Of course, the same argument is valid if, instead, $X_2$ and $Y_1$ are connective.
Moreover, if $f_1$ is a weak equivalence, then
we only need that $X_1$ be connective, since $f_1\wedge Y_2$ is in this case a weak equivalence,
and similarly if the indices are exchanged.
\end{proof}

We emphasize that this apparently \emph{ad hoc} result is crucial in the proof of
Corollary~\ref{monster}, where connectivity conditions are imposed
in the case when $L$ does not commute with suspension.
These connectivity conditions are justified by the result that we have just shown,
and their necessity will be demonstrated with counterexamples
at the end of this section.

Let us recall that an \textit{$A_{\infty}$\nobreakdash-ring} is an algebra over a cofibrant resolution of $\Ass$
(which need therefore not be a strict ring, although it is weakly equivalent to one).
An \textit{$A_{\infty}$\nobreakdash-map of $A_{\infty}$\nobreakdash-rings}
is an algebra over a cofibrant resolution of $\Mor_{\Ass}$ (which, as explained
in Subsection~\ref{pinftymaps}, is a weaker notion than a morphism
of $A_{\infty}$\nobreakdash-rings). If $(R,M)$ is an algebra over a cofibrant resolution of $\LMod_{\Ass}$,
then $M$ is called a \textit{left $A_{\infty}$\nobreakdash-module} over~$R$, as in Subsection~\ref{ainftymods}.
Accordingly, an \textit{$A_{\infty}$\nobreakdash-map of left $A_{\infty}$\nobreakdash-modules} is an algebra over
a cofibrant resolution of $\Mor_{P}$ where $P=\LMod_{\Ass}$. The same terminology is used with $E_{\infty}$.

Note that, if the value of a $C$\nobreakdash-coloured operad $P$ on a given tuple of colours $(c_1,\ldots,c_n;c)$
is the empty set, and $P_{\infty}\longrightarrow P$
is a cofibrant resolution, then the value of $P_{\infty}$ on $(c_1,\ldots,c_n;c)$ is also the empty set, since
\[ P_{\infty}(c_1,\ldots,c_n;c)\longrightarrow P(c_1,\ldots,c_n;c) \] is a weak equivalence.
This ensures that, if $J$ is an ideal relative to $P$, then $J$ is also an ideal relative to $P_{\infty}$.
This fact is important for the validity of the next result.

\begin{cor}
\label{monster}
Let $(L,\eta)$ be a homotopical localization on symmetric spectra that commutes with suspension.
Let $M$ be a left $A_{\infty}$\nobreakdash-module over an $A_{\infty}$\nobreakdash-ring $R$, and assume that both $R$
and $M$ are cofibrant as spectra. Then the following hold:
\begin{itemize}
\item[{\rm (i)}]
$LR$ has a homotopy unique $A_{\infty}$\nobreakdash-ring structure such that $\eta_R\colon R\longrightarrow LR$
is a morphism of $A_{\infty}$\nobreakdash-rings.
\item[{\rm (ii)}]
$LM$ has a homotopy unique left $A_{\infty}$\nobreakdash-module structure over $R$ such that $\eta_M\colon M\longrightarrow LM$ is
a morphism of $A_{\infty}$\nobreakdash-modules.
\item[{\rm (iii)}]
$LM$ admits a homotopy unique left $A_{\infty}$\nobreakdash-module structure over $LR$ extending the 
$A_{\infty}$\nobreakdash-module structure over~$R$.
\item[{\rm (iv)}]
If $f\colon R\longrightarrow T$ is an $A_{\infty}$\nobreakdash-map of cofibrant $A_{\infty}$\nobreakdash-rings, then $Lf$ admits
a homotopy unique compatible $A_{\infty}$\nobreakdash-map structure.
\item[{\rm (v)}]
If $g\colon M\longrightarrow N$ is an $A_{\infty}$\nobreakdash-map of cofibrant left $A_{\infty}$\nobreakdash-modules over~$R$,
then $Lg$ admits a homotopy unique compatible structure of an 
$A_{\infty}$\nobreakdash-map of left $A_{\infty}$\nobreakdash-modules over~$R$, and also over~$LR$.
\end{itemize}
\end{cor}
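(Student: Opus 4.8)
The plan is to reduce each part of Corollary~\ref{monster} to an application of Theorem~\ref{mainthm}, by exhibiting the relevant structure as an algebra over a suitable cofibrant coloured operad and choosing the correct ideal $J$, exactly as in the proof of Corollary~\ref{cormodules} but now in the setting of symmetric spectra. The essential new input compared with the closed-localization case is that the smash product of two $L$\nobreakdash-equivalences with cofibrant domains need not be an $L$\nobreakdash-equivalence; however, since $L$ commutes with suspension, Theorem~\ref{f1smashf2}(i) guarantees that it always is, so the hypothesis ``$(\eta_{\bf X})_{c_1}\otimes\cdots\otimes(\eta_{\bf X})_{c_n}$ is an $L$\nobreakdash-equivalence whenever $P(c_1,\ldots,c_n;c)$ is nonempty'' in Theorem~\ref{mainthm} is automatically satisfied for every choice of ideal. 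Thus the commuting-with-suspension assumption is used precisely once, at the outset, and from then on the argument is purely operadic.

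For part~(i), I would take $P$ to be a cofibrant resolution of $\Ass$ (so $P_{\infty}$-algebras are $A_{\infty}$\nobreakdash-rings), viewed as a one-colour operad, with the empty ideal $J=\emptyset$; since $R$ is cofibrant as a spectrum and the smash of $L$\nobreakdash-equivalences is an $L$\nobreakdash-equivalence, Theorem~\ref{mainthm} gives the homotopy unique $A_{\infty}$\nobreakdash-ring structure on $LR=L\mathbf{X}$ with $\eta_R$ an $A_{\infty}$\nobreakdash-ring map. For part~(ii), I would use a cofibrant resolution of $\LMod_{\Ass}$ over $C=\{r,m\}$ (as in Subsection~\ref{ainftymods}), with the pair $(R,M)$ as the algebra and the ideal $J=\{r\}$, so that the $r$-component is left fixed while the $m$-component is localized; here one must note that $J=\{r\}$ is indeed an ideal relative to $(\LMod_{\Ass})_{\infty}$, which follows from the remark immediately before the corollary (empty values are preserved by cofibrant resolution, so ideals relative to $P$ are ideals relative to $P_\infty$). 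For part~(iii), I would rerun the same construction with the ideal $J=\emptyset$, obtaining an $LR$-module structure on $LM$, and the compatibility (``extending the $R$\nobreakdash-module structure'') comes from functoriality of the construction together with the commuting triangle linking the two $A_\infty$-module structures, since $(R,M)\to(LR,LM)$ is a map of $(\LMod_{\Ass})_{\infty}$-algebras and restriction along $\{r\}\hookrightarrow C$ is compatible. Parts~(iv) and~(v) follow in the same way by taking $P$ to be a cofibrant resolution of $\Mor_{\Ass}$, respectively of $\Mor_{Q}$ with $Q=\LMod_{\Ass}$, over the doubled colour sets $D=\{0,1\}$, respectively $D=\{0,1\}\times\{r,m\}$; the ideals are $J=\emptyset$ for the bare $A_\infty$-map statement, $J=\{(0,r),(1,r)\}$ to get an $A_\infty$-map of $A_\infty$-modules over $R$, and $J=\emptyset$ again to get the version over $LR$, exactly paralleling Corollary~\ref{cormodules}(v). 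As in parts~(ii) of Corollary~\ref{cormodules}, the source and target algebra structures on $Lf$ (respectively $Lg$) automatically agree with those produced in (i)--(iii) because $f\mapsto Lf$ is a map of algebras over the morphism operad, so the restriction-of-colours functors identify the pieces.

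**The main obstacle** I anticipate is bookkeeping rather than a genuine difficulty: one has to be careful that all the auxiliary spaces occurring in the coloured operads $\Mor_{\Ass}$, $\LMod_{\Ass}$, $\Mor_{\LMod_{\Ass}}$ have cofibrant resolutions whose ideal structure is as claimed, and that the ``$A_\infty$-map'' linking data (the liftings~(\ref{weakunits}) and~(\ref{jajajaja})) are genuinely preserved, i.e.\ that the distinguished homotopy class of maps $\mathbf{X}_0\to\mathbf{X}_1$ defined via~(\ref{themap}) is compatible with $\eta$. This compatibility is where one invokes~(\ref{fancy}) and the fact that $\eta_{\mathbf X}$ is a map of $(\Mor_P)_\infty$-algebras, together with the commutativity of the square obtained by tensoring $\eta_{X(0,c)}$ and $\eta_{X(1,c)}$ with the structure maps; it is routine but must be spelled out to justify the word ``compatible'' in (iv) and (v). The uniqueness-up-to-homotopy statements require no new argument: they are immediate from the uniqueness clause of Theorem~\ref{mainthm} applied to each of the coloured operads above.
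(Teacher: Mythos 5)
Your proposal is correct and follows essentially the same route as the paper's proof: each part is obtained from Theorem~\ref{mainthm} applied to cofibrant resolutions of $\Ass$, $\LMod_{\Ass}$, $\Mor_{\Ass}$, and $\Mor_Q$ with $Q=\LMod_{\Ass}$, with exactly the ideals you list, the smash-product hypothesis being supplied by Theorem~\ref{f1smashf2}(i) since $L$ commutes with suspension, and the preservation of ideals under cofibrant resolution justified by the remark preceding the corollary. The only difference is cosmetic: the paper also tracks which connectivity hypotheses replace the suspension hypothesis in each part, which is outside the statement you were asked to prove.
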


Similar statements are true for right modules and bimodules,
and the same results hold for $E_{\infty}$\nobreakdash-rings and their modules.

If $L$ does not commute with suspension, then the same statements hold by assuming
that $R$ and $LR$ are connective in~{\rm (i)}; that $R$ is connective in~{\rm (ii)}; that $R$ and $LR$ are
connective, and at least one of $M$ and $LM$ is connective in~{\rm (iii)}; that $R$, $T$, $LR$, and $LT$
are connective in~{\rm (iv)}; that $R$ is connective for the first claim in~{\rm (v)}, and
that $R$, $LR$, $M$ or $LM$, and $N$ or $LN$ are connective for the second claim in~{\rm (v)}.

\begin{proof}
In part (i), use a cofibrant resolution of the operad $\Ass$.
If $L$ commutes with suspension, then the result follows from Theorem \ref{mainthm}, since
every finite smash product of $L$\nobreakdash-equivalences is an $L$\nobreakdash-equivalence. If $L$ does not commute with suspension,
then we need
to prove that $\eta_R\wedge\cdots\wedge\eta_R$ is an $L$\nobreakdash-equivalence for any finite number of factors. 
By Theorem~\ref{f1smashf2}, this follows
from the fact that $R$ and $LR$ are connective.

In part (ii), use a cofibrant resolution of the non\nobreakdash-symmetric $C$\nobreakdash-coloured operad $\LMod_{\Ass}$ with $C=\{r,m\}$
described in Subsection~\ref{mulmod},
and choose the ideal \hbox{$J=\{r\}$}. Thus, $(R,M)$ is an algebra over this $C$\nobreakdash-coloured operad.
In order to prove that $R\wedge\cdots\wedge R\wedge\eta_M$ is an $L$\nobreakdash-equivalence (where $R$
appears an arbitrary number of times, while $\eta_M$ appears precisely once), we only need that
$R$ be connective.

To prove (iii), use again a cofibrant resolution of $\LMod_{\Ass}$ with $C=\{r,m\}$,
and choose the ideal $J=\emptyset$. Here we need that $\eta_R\wedge\cdots\wedge\eta_R$ be an $L$\nobreakdash-equivalence
for any number of factors,
which is the case if either $L$ commutes with suspension or $R$ and $LR$ are connective, and we also need that $\eta_R\wedge\cdots\wedge\eta_R\wedge\eta_M$ be an $L$\nobreakdash-equivalence for any number of factors, where
$\eta_M$ appears precisely once. This is the case if either $L$ commutes with suspension, or $R$ and $LR$
and at least one of $M$ and $LM$ are connective.

For part~(iv), use a cofibrant resolution of the coloured operad $\Mor_{\Ass}$ with $J=\emptyset$.
We need that $\eta_R\wedge\cdots\wedge\eta_R\wedge\eta_T\wedge\cdots\wedge\eta_T$ be an $L$\nobreakdash-equivalence
for any number of factors, which happens if either $L$ commutes with suspension or $R$, $LR$,
$T$, and $LT$ are connective.

Similarly, in part~(v)
use a cofibrant resolution of the coloured operad $\Mor_Q$ with $Q=\LMod_{\Ass}$.
In order to infer that $Lg$ is an $A_{\infty}$\nobreakdash-map of $A_{\infty}$\nobreakdash-modules
over~$R$, choose the ideal $J=\{(0,r),(1,r)\}$. In the case when $L$ does not commute with suspension,
it is enough to assume that $R$ be connective.
If we wish to infer that $Lg$ is an $A_{\infty}$\nobreakdash-map of $A_{\infty}$\nobreakdash-modules over $LR$,
then we have to choose $J=\emptyset$, and, if $L$ does not commute with suspension,
we need to add the assumption that $LR$ be connective and at least one of $M$ and $LM$
be connective and furthermore at least one of $N$ and $LN$ be connective,
by the same reason as in part~(iii).
\end{proof}

If $L$ does not commute with suspension, then the condition that $R$ be connective
cannot be dropped in part~(i). Indeed, the $n$th Postnikov section functor $P_n$ is a
homotopical localization for all~$n$, and, if $R$ is nonconnective, then $P_{-1}R$ does not admit
a ring spectrum structure ---not even up to homotopy--- since the composite
of the unit map $\nu\colon S\longrightarrow P_{-1}R$ with the multiplication map
\[
S\wedge P_{-1}R \longrightarrow P_{-1}R \wedge P_{-1}R \longrightarrow P_{-1}R
\]
has to be a homotopy equivalence, yet $\nu$ is null since $\pi_0(P_{-1}R)=0$.

Similarly, in part~(ii), we need that $R$ be connective, since otherwise the Postnikov sections of $R$
need not be homotopy modules over~$R$. The following example is a simpler version of
\cite[Example~4.4]{CG05}. Let $K(n)$ be the Morava $K$\nobreakdash-theory spectrum for any prime $p$ and $n\ge 1$,
and let $i$ be any integer. If $P_iK(n)$ were a homotopy module spectrum over $K(n)$, then the composite
of the unit map of $K(n)$ with the structure map of $P_iK(n)$
\begin{equation}
\label{PKn}
S\wedge P_iK(n) \longrightarrow K(n)\wedge P_iK(n) \longrightarrow P_iK(n)
\end{equation}
would be a homotopy equivalence. However, $K(n)\wedge H\Z/p\simeq 0$ while $P_iK(n)$
has nonzero mod~$p$ homology; see~\cite[p.~545]{Rud98}.

In part (iii), we need in addition
that either $M$ or $LM$ be connective; otherwise a counterexample can be displayed as follows.
Let $R$ be the integral Eilenberg\nobreakdash--Mac Lane spectrum $H\Z$ and let $L$ be localization with
respect to the map $f\colon S\longrightarrow S\Q$, where $S\Q$ denotes
a rational Moore spectrum, and the map $f$ is induced by the inclusion $\Z\hookrightarrow\Q$.
Then $LR\simeq H\Q$.
However, if $M=\Sigma^{-1}H\Z$, then $M$ is $L$\nobreakdash-local, yet it is not an $H\Q$\nobreakdash-module.
Incidentally, this example shows that the condition that either $M$ or $LM$ be connective
was also necessary in \cite[Theorem 4.5]{CG05}.

\section{Rectification results for spectra}
\label{rectifications}
Let $\M$ be, as above, the category of symmetric spectra over simplicial sets
with the positive stable model structure.
According to \cite[Theorem 1.3]{EM06}, for every set $C$ and every $C$\nobreakdash-coloured operad $P$
in simplicial sets, there is a model structure on the category of $P$\nobreakdash-algebras in $\M$
in which a map of $P$\nobreakdash-algebras ${\bf X}\longrightarrow {\bf Y}$ is a weak equivalence
(resp.\ a fibration) if and only if,
for each $c\in C$, the map $X(c)\longrightarrow Y(c)$ is a weak equivalence
(resp.\ a positive fibration) of symmetric spectra.
If $P=\Ass$ or $P=\Com$, then the corresponding model structures coincide with the model
structures used in categories of ring spectra by other authors, e.g. in \cite{Shi04}.

If $P$ is a $C$\nobreakdash-coloured operad in simplicial sets and $\varphi\colon P_{\infty}\longrightarrow P$
is a cofibrant resolution, then it follows from \cite[Theorem 1.4]{EM06} that the adjoint pair
\begin{equation}
\label{quillenpair}
\varphi_! : {\rm Alg}_{P_{\infty}}(\M)\rightleftarrows {\rm Alg}_{P}(\M) : \varphi^*,
\end{equation}
where $\varphi^*$ assigns to each $P$\nobreakdash-algebra the $P_{\infty}$\nobreakdash-algebra structure given by composing with~$\varphi$,
and $\varphi_!$ is its left adjoint, defines a Quillen equivalence.
Consequently, if $\bf X$ is a $P_{\infty}$\nobreakdash-algebra,
and $Q$ is a cofibrant replacement functor on the model category of ${P_{\infty}}$\nobreakdash-algebras,
while $F$ is a fibrant replacement functor on the model category of $P$\nobreakdash-algebras,
then the unit map
\[
Q{\bf X}\longrightarrow \varphi^*F\varphi_{!}Q{\bf X}
\]
is a weak equivalence; see \cite[Corollary 1.3.16]{Hov99}.
Thus, $\varphi_{!}Q{\bf X}$ is a functorial \textit{rectification} of ${\bf X}$
for each $P_{\infty}$\nobreakdash-algebra ${\bf X}$. Indeed, $\varphi_{!}Q{\bf X}$ is a $P$\nobreakdash-algebra
and its component at $c$ is weakly equivalent to $X(c)$ in $\M$ for all $c\in C$.

Dually, if ${\bf X}$ is a $P$\nobreakdash-algebra, then the counit map
$
\varphi_!Q\varphi^*F{\bf X} \longrightarrow F{\bf X}
$
is a weak equivalence of $P$\nobreakdash-algebras. 
Since weak equivalences of $P$\nobreakdash-algebras are defined componentwise, 
$\varphi^*$ preserves them. This implies that,
if ${\bf X}$ is a $P$\nobreakdash-algebra, then 
\begin{equation}
\label{counit}
\varphi_{!}Q\varphi^*{\bf X}\simeq \varphi_{!}Q\varphi^*F{\bf X}\simeq
F{\bf X} \simeq{\bf X}
\end{equation}
\emph{as $P$\nobreakdash-algebras} (thus, rectifying a $P_{\infty}$\nobreakdash-algebra which
is in fact a $P$\nobreakdash-algebra yields a
weakly equivalent $P$\nobreakdash-algebra). This will be relevant in the rest of the article.

We label the following statements for subsequent reference.

\begin{lem}
\label{rectifying}
Let ${\bf X}$ and ${\bf Y}$ be $P$\nobreakdash-algebras in $\M$, where $P$ is a $C$\nobreakdash-coloured operad.
Let $\varphi\colon P_{\infty}\longrightarrow P$ be a cofibrant resolution.
If $\varphi^*{\bf X}$ and $\varphi^*{\bf Y}$ are weakly equivalent as $P_{\infty}$\nobreakdash-algebras, then ${\bf X}$
and ${\bf Y}$ are weakly equivalent as $P$\nobreakdash-algebras.
\end{lem}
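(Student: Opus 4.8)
The plan is to transport the given zig-zag of $P_{\infty}$\nobreakdash-algebra weak equivalences to a zig-zag of $P$\nobreakdash-algebra weak equivalences by applying the left adjoint $\varphi_{!}$ from the Quillen equivalence~(\ref{quillenpair}) (suitably composed with a cofibrant replacement), and then to recognize the two endpoints of the resulting chain as ${\bf X}$ and~${\bf Y}$ by means of~(\ref{counit}). Throughout, ``weakly equivalent'' is understood, as usual, as ``connected by a finite zig-zag of weak equivalences'', so that chains of such may be concatenated.

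First I would fix a functorial cofibrant replacement $Q$ on the model category ${\rm Alg}_{P_{\infty}}(\M)$, as in Section~\ref{rectifications}, so that each $Q{\bf Z}$ is cofibrant and the natural map $Q{\bf Z}\longrightarrow{\bf Z}$ is a trivial fibration, in particular a weak equivalence. The two-out-of-three property then shows that $Q$ preserves all weak equivalences of $P_{\infty}$\nobreakdash-algebras. Since $\varphi_{!}$ is left Quillen by \cite[Theorem~1.4]{EM06}, Ken Brown's lemma shows that it preserves weak equivalences between cofibrant objects. Hence the composite $\varphi_{!}Q$ carries every weak equivalence of $P_{\infty}$\nobreakdash-algebras to a weak equivalence of $P$\nobreakdash-algebras, and therefore carries any finite zig-zag of weak equivalences of $P_{\infty}$\nobreakdash-algebras to such a zig-zag of $P$\nobreakdash-algebras.

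Applying $\varphi_{!}Q$ term by term to a chain of weak equivalences joining $\varphi^*{\bf X}$ and $\varphi^*{\bf Y}$ in ${\rm Alg}_{P_{\infty}}(\M)$ thus yields a chain of weak equivalences of $P$\nobreakdash-algebras joining $\varphi_{!}Q\varphi^*{\bf X}$ and $\varphi_{!}Q\varphi^*{\bf Y}$. Finally, (\ref{counit}) provides zig-zags of $P$\nobreakdash-algebra weak equivalences $\varphi_{!}Q\varphi^*{\bf X}\simeq{\bf X}$ and $\varphi_{!}Q\varphi^*{\bf Y}\simeq{\bf Y}$; concatenating the three chains gives the desired zig-zag of weak equivalences between ${\bf X}$ and ${\bf Y}$ as $P$\nobreakdash-algebras. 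There is no real obstacle here, the statement being essentially formal once one has the Quillen equivalence~(\ref{quillenpair}) and the identity~(\ref{counit}); the only point requiring a little care is the verification that $\varphi_{!}Q$ preserves weak equivalences, which rests on Ken Brown's lemma for $\varphi_{!}$ and on the fact that $Q$ is a weak-equivalence-preserving cofibrant replacement.
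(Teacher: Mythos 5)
Your proposal is correct and follows essentially the same route as the paper: apply $\varphi_{!}Q$ to the zig-zag of $P_{\infty}$\nobreakdash-algebra weak equivalences, using that $\varphi_{!}$ preserves weak equivalences between cofibrant objects, and then identify the endpoints with ${\bf X}$ and ${\bf Y}$ via~(\ref{counit}). Your extra remarks (Ken Brown's lemma, $Q$ preserving weak equivalences by two-out-of-three) simply make explicit what the paper leaves implicit.
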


\begin{proof}
Let $Q$ be a cofibrant replacement functor on $P_{\infty}$\nobreakdash-algebras.
Since $\varphi_!$ preserves weak equivalences between cofibrant objects,
we have $\varphi_!Q\varphi^*{\bf X}\simeq\varphi_!Q\varphi^*{\bf Y}$
as $P$\nobreakdash-algebras, and it follows from~(\ref{counit}) that ${\bf X}\simeq{\bf Y}$, as claimed.
\end{proof}

\begin{lem}
\label{cofibrantcomponents}
If $P$ is a cofibrant $C$\nobreakdash-coloured operad in simplicial sets and $\bf X$ is a cofibrant $P$\nobreakdash-algebra
in~$\M$, then $X(c)$ is cofibrant for all $c\in C$.
\end{lem}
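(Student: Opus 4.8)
The plan is to prove the stronger statement that the forgetful functor $U\colon{\rm Alg}_P(\M)\longrightarrow\M^C$ carries cofibrant $P$-algebras to objects that are cofibrant in every component, by reducing everything to an analysis of cellular $P$-algebras. Since the model structure on ${\rm Alg}_P(\M)$ is obtained by transfer along the adjunction given by the free $P$-algebra functor $F_P\colon\M^C\longrightarrow{\rm Alg}_P(\M)$ and its right adjoint $U$, every cofibrant $P$-algebra is a retract of a \emph{cell} $P$-algebra, i.e.\ of a transfinite composite
\[
F_P(\mathbf{0})=\mathbf{Z}_0\longrightarrow\mathbf{Z}_1\longrightarrow\cdots\longrightarrow\mathbf{Z}_\beta\longrightarrow\cdots
\]
starting at the initial $P$-algebra $F_P(\mathbf{0})$, in which each $\mathbf{Z}_\beta\longrightarrow\mathbf{Z}_{\beta+1}$ is a pushout in ${\rm Alg}_P(\M)$ of a free map $F_P(\mathbf{A})\longrightarrow F_P(\mathbf{B})$ along a generating cofibration $\mathbf{A}\longrightarrow\mathbf{B}$ of $\M^C$ (a cofibration of $\M$ with cofibrant domain in one colour, an identity in the others). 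Because $U$ creates filtered colimits and preserves retracts, and transfinite composites of cofibrations are cofibrations, it suffices to show: (i) $U F_P(\mathbf{0})$ is cofibrant in every component; and (ii) if $\mathbf{Y}$ is a $P$-algebra with $U\mathbf{Y}$ cofibrant in every component and $\mathbf{Z}=\mathbf{Y}\amalg_{F_P(\mathbf{A})}F_P(\mathbf{B})$ is such a pushout, then $U\mathbf{Z}$ is again cofibrant in every component and $U\mathbf{Y}\longrightarrow U\mathbf{Z}$ is a componentwise cofibration.

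For (ii) I would invoke the standard filtration of a pushout of free algebras over a coloured operad; see \cite[\S 3]{BM03}, \cite[\S 3]{BM07}, or the computations behind \cite[Theorem 1.3]{EM06}. This presents $U\mathbf{Z}$ as a sequential colimit $U\mathbf{Y}=W_0\longrightarrow W_1\longrightarrow\cdots$ in $\M^C$, where $W_{n-1}\longrightarrow W_n$ is the pushout in $\M^C$ along a map whose value at a colour $c$ is a coproduct, over tuples $(c_1,\ldots,c_k;c)$, of maps of the shape
\[
\bigl(P(c_1,\ldots,c_k;c)\boxtimes Q^n\bigr)_{H}\longrightarrow\bigl(P(c_1,\ldots,c_k;c)\boxtimes R^n\bigr)_{H},
\]
where $Q^n\longrightarrow R^n$ is an iterated pushout-product of $\mathbf{A}\longrightarrow\mathbf{B}$ with exactly $n$ ``new'' factors, smashed in $\M$ with components of $\mathbf{Y}$, and $H\le\Sigma_k$ is the subgroup permuting the new factors, which acts freely on them. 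Now use two facts recalled in Section~\ref{modelstructures}: since $P$ is a cofibrant $C$-coloured operad, its underlying collection is $\Sigma$-cofibrant, so each $P(c_1,\ldots,c_k;c)$ is cofibrant as a $\Sigma_{c_1,\ldots,c_k}$-object; and since $\mathbf{A}\longrightarrow\mathbf{B}$ is a componentwise cofibration between cofibrant objects, the pushout-product axiom in $\M$ makes $Q^n\longrightarrow R^n$ a cofibration with cofibrant domain, equivariantly for the free $H$-action on the new factors. Combining these, each displayed map is a cofibration in $\M^C$ with cofibrant domain, so $W_{n-1}\longrightarrow W_n$ is a componentwise cofibration; by induction from the componentwise-cofibrant $W_0=U\mathbf{Y}$, every $W_n$ is componentwise cofibrant, and passing to the colimit---which $U$ creates---yields (ii).

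For the base case (i), the $c$-component of the initial $P$-algebra is $F_P(\mathbf{0})(c)=P(\;;c)\boxtimes I$; in the simplicial-set case $I$ is cofibrant and this is immediate. Granting (i), a cell $P$-algebra has componentwise-cofibrant underlying object, hence so does every retract of one, which gives the lemma. The main obstacle is exactly the point where the \emph{positive} stable model structure on symmetric spectra enters: one must check that the equivariant quotients appearing in the filtration of~(ii), together with the arity-zero term $P(\;;c)\boxtimes I$ in~(i), are genuinely cofibrant even though the unit $I$ is not cofibrant and the symmetric powers involved include commutative-type symmetry. This is the delicate interaction between the (positive) model structure and symmetric powers, and it is here that one relies on the rectification results of \cite{EM06} rather than on formal arguments alone.
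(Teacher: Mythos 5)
Your cell-induction is, in outline, the standard argument that lies behind the references the paper itself invokes: the paper's proof of Lemma~\ref{cofibrantcomponents} is a two-line citation, namely \cite[Proposition~4.3]{BM03} together with \cite[Corollary~5.5]{BM03} for one colour (cofibrant operads are $\Sigma$-cofibrant, and over $\Sigma$-cofibrant operads the forgetful functor preserves cofibrancy), and the argument of \cite[Theorem~4.1]{BM07} for several colours. So your overall strategy is aligned with the source of the paper's proof. The problem is that, as written, your proposal has a genuine gap, and you have located it yourself without closing it. The inductive step (ii) is actually the unproblematic part: since a cofibrant $C$-coloured operad in simplicial sets is $\Sigma$-cofibrant, the group $H$ permuting the new factors acts freely on the operad part, and the quotients $\bigl(P(c_1,\ldots,c_k;c)\boxtimes R^n\bigr)_H$ are handled by a formal cell induction over the simplicial set $P(c_1,\ldots,c_k;c)$ using SM7 and the pushout-product axiom; no appeal to \cite{EM06} is needed there (though note the filtration really involves the enveloping coloured operad of $\mathbf{Y}$, whose componentwise cofibrancy must be carried along in the induction, a point your phrasing ``smashed with components of $\mathbf{Y}$'' glosses over). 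What is not handled is the base case and every arity-zero contribution: in the positive stable model structure on symmetric spectra the unit $I$ is \emph{not} cofibrant, so $F_P(\mathbf{0})(c)=P(\;;c)\boxtimes I$ is not obviously (and in general not) positively cofibrant when $P(\;;c)\neq\emptyset$, and your remark that ``in the simplicial-set case $I$ is cofibrant and this is immediate'' does not apply, because the algebras live in $\M$, not in simplicial sets.

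Your closing paragraph concedes exactly this and defers to ``the rectification results of \cite{EM06}'', but that is not an argument: the rectification theorems of \cite{EM06} assert Quillen equivalences between ${\rm Alg}_{P_{\infty}}(\M)$ and ${\rm Alg}_{P}(\M)$ and say nothing about cofibrancy of the underlying components of a cofibrant algebra. Thus the one step your reduction actually requires — controlling the arity-zero terms (equivalently, the map from the initial $P$-algebra) in the positive model structure — is precisely the step left unproved, and it is where the real content of the lemma sits. To complete the proof you must either import the cited statements of \cite{BM03} and \cite{BM07}, as the paper does, or give an explicit argument for the arity-zero/unit issue (for instance by proving the relative statement that the underlying map of a cofibration of $P$-algebras with appropriate domain is a componentwise cofibration under the initial algebra, and then analysing that initial term separately), rather than pointing to rectification.
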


\begin{proof}
If $C$ has only one colour, this follows from
\cite[Proposition 4.3]{BM03} and \cite[Corollary 5.5]{BM03}.
The extension to several colours follows from the argument used
in the proof of \cite[Theorem 4.1]{BM07}.
\end{proof}

In the category of arrows of $\M^C$ we consider the model structure
whose weak equivalences and fibrations are componentwise. Thus,
two vertical arrows $f$ and $f'$ are weakly equivalent if there is a zig-zag
of commutative squares 
\[
\label{zigzagofmaps}
\xymatrix{
{\bf X} \ar[d]_{f} \ar[r]^{\simeq} & {\bf X}_0 \ar[d]^{f_0} & {\bf X}_1 \ar[d]^{f_1} \ar[r]^{\simeq} \ar[l]_{\simeq} &
\cdots & {\bf X}_n \ar[d]^{f_n} \ar[r]^{\simeq} \ar[l]_{\simeq} & {\bf X}' \ar[d]^{f'} \\
{\bf Y} \ar[r]^{\simeq} & {\bf Y}_0 & {\bf Y}_1 \ar[r]^{\simeq} \ar[l]_{\simeq} &
\cdots & {\bf Y}_n \ar[r]^{\simeq} \ar[l]_{\simeq} & {\bf Y}'
}
\]
whose horizontal arrows are weak equivalences at each colour.

We say that two functors $F$ and $F'$ from any given category to a model category
are \emph{naturally weakly equivalent} if there is a zig-zag
of natural transformations between $F$ and $F'$ that are weak equivalences at every object. 
For an object $X$, we will say that $FX$ and $F'X$ are
naturally weakly equivalent if $F$ and $F'$ are clear from the context
and naturally weakly equivalent.

The following result is inferred from Theorem~\ref{mainthm} and will yield
the main results in this section as special cases. To grasp its significance,
note that no cofibrancy assumption is made on the coloured operad~$P$.

\begin{thm}
\label{javierslemma}
Let $(L,\eta)$ be a homotopical localization on the model category $\M$ of symmetric spectra.
Let $P$ be a $C$\nobreakdash-coloured operad in simplicial sets, where $C$ is any set,
and consider the extension of $(L,\eta)$ over $\M^C$
away from an ideal $J\subseteq C$ relative to $P$.
Let ${\bf X}$ be a $P$\nobreakdash-algebra such that $X(c)$ is cofibrant for each $c\in C$,
and let $\eta_{\bf X}\colon {\bf X}\longrightarrow L{\bf X}$ be the localization map.
Suppose that $(\eta_{\bf X})_{c_1}\wedge\cdots\wedge (\eta_{\bf X})_{c_n}$
is an $L$\nobreakdash-equivalence whenever $P(c_1,\ldots,c_n;c)$ is nonempty.
Then there is a map $\xi_{\bf X}\colon D{\bf X}\longrightarrow T{\bf X}$ of $P$\nobreakdash-algebras, depending
functorially on ${\bf X}$, such that:
\begin{itemize}
\item[{\rm (i)}] ${\bf X}$ and $D{\bf X}$ are naturally weakly equivalent as $P$\nobreakdash-algebras;
\item[{\rm (ii)}] $L{\bf X}$ and $T{\bf X}$ are naturally
weakly equivalent as $P_{\infty}$\nobreakdash-algebras;
\item[{\rm (iii)}] $\eta_{\bf X}$ and $\xi_{\bf X}$ are naturally weakly equivalent as
$(\Mor_P)_{\infty}$\nobreakdash-algebras.
\end{itemize}
\end{thm}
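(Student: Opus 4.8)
The plan is to reduce Theorem~\ref{javierslemma} to Theorem~\ref{mainthm} by replacing $P$ and $\Mor_P$ with cofibrant resolutions and then rectifying, as at the beginning of Section~\ref{rectifications}. First I would fix once and for all a cofibrant resolution $\varphi\colon P_{\infty}\longrightarrow P$, a cofibrant resolution $\Phi\colon(\Mor_P)_{\infty}\longrightarrow\Mor_P$, the liftings $\lambda_0$, $\lambda_1$ and $\Psi$ from Subsection~\ref{pinftymaps}, and a functorial cofibrant replacement functor $Q$ on the category of $(\Mor_P)_{\infty}$-algebras in~$\M$; write $\Phi_!$ for the left adjoint of $\Phi^*$, as in~(\ref{quillenpair}). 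Since $P_{\infty}(c_1,\ldots,c_n;c)\longrightarrow P(c_1,\ldots,c_n;c)$ is a weak equivalence, $P_{\infty}$ is empty exactly where $P$ is, so $J$ is an ideal relative to $P_{\infty}$ as well; and a direct inspection of the definition of $\Mor_P$ shows that $J''=(\{0\}\times C)\cup(\{1\}\times J)$ is an ideal relative to $\Mor_P$, hence to $(\Mor_P)_{\infty}$.

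Next, the identity ${\rm id}\colon\varphi^*{\bf X}\longrightarrow\varphi^*{\bf X}$ is a map of $P_{\infty}$-algebras, hence a $\Mor_{P_{\infty}}$-algebra; pulling it back along $\Psi$ yields a $(\Mor_P)_{\infty}$-algebra ${\bf V}$ with ${\bf V}(0,c)={\bf V}(1,c)=X(c)$ cofibrant, depending functorially on~${\bf X}$. I would then apply Theorem~\ref{mainthm} to the cofibrant operad $(\Mor_P)_{\infty}$, the ideal~$J''$ and the algebra~${\bf V}$: the tuples that need to be inspected are those with output outside~$J''$, and on these the relevant smash of the maps $(\eta_{\bf X})_c$ with identities is an $L$-equivalence by the hypothesis of the theorem. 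This equips $L{\bf V}$ with a $(\Mor_P)_{\infty}$-algebra structure for which $\eta_{\bf V}$ is a map of $(\Mor_P)_{\infty}$-algebras; using the weak units~(\ref{weakunits}) together with~(\ref{jajajaja}) and~(\ref{fancy}) to identify the two sides with the $P_{\infty}$-algebras $\varphi^*{\bf X}$ and $L{\bf X}$, and the connecting map with~$\eta_{\bf X}$, one sees that $L{\bf V}$ is precisely the localization map ${\bf X}\longrightarrow L{\bf X}$ equipped with a $(\Mor_P)_{\infty}$-algebra structure.

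Finally I would rectify: set $D{\bf X}=\alpha_0^*\Phi_!Q(L{\bf V})$, $T{\bf X}=\alpha_1^*\Phi_!Q(L{\bf V})$, and let $\xi_{\bf X}\colon D{\bf X}\longrightarrow T{\bf X}$ be the map of $P$-algebras underlying the $\Mor_P$-algebra $\Phi_!Q(L{\bf V})$; this is functorial in~${\bf X}$ since $\Phi_!$, $Q$, $\alpha_0^*$ and $\alpha_1^*$ are functors. The rectification identity~(\ref{counit}), for $\Mor_P$ and $(\Mor_P)_{\infty}$, gives a natural chain of weak equivalences $\Phi^*\Phi_!Q(L{\bf V})\simeq Q(L{\bf V})\simeq L{\bf V}$ of $(\Mor_P)_{\infty}$-algebras, which is~(iii) since $L{\bf V}$ is $\eta_{\bf X}$ carrying its $(\Mor_P)_{\infty}$-structure. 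Restricting this chain along $\alpha_0$ and using $(\alpha_0^*\Phi)\circ\lambda_0=\varphi$ from~(\ref{jajajaja}) gives $\varphi^*(D{\bf X})\simeq\varphi^*{\bf X}$ as $P_{\infty}$-algebras, whence $D{\bf X}\simeq{\bf X}$ as $P$-algebras by Lemma~\ref{rectifying}, which is~(i); restricting along $\alpha_1$ and using~(\ref{fancy}), so that $(\alpha_1^*\Psi)\circ\lambda_1$ is homotopic to the identity of $P_{\infty}$, exhibits $\varphi^*(T{\bf X})$ as a $P_{\infty}$-algebra structure on $L{\bf X}$ making $\eta_{\bf X}$ a $P_{\infty}$-algebra map, hence by the homotopy uniqueness in Theorem~\ref{mainthm} naturally weakly equivalent to the one there, which is~(ii).

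The step that I expect to require most care is functoriality. Theorem~\ref{mainthm} produces the structure on $L{\bf V}$ through a lift against a trivial fibration and only asserts it to be homotopy unique, so to speak of the \emph{functors} $D$ and $T$ one has to check that this lift may be chosen naturally in~${\bf X}$ --- which is possible because $L$ is itself a functor and the trivial fibration in question is constructed functorially from~${\bf V}$ --- or else, what is enough here, that $\Phi_!Q(-)$ depends on $L{\bf V}$ only up to a natural weak equivalence. Everything else is essentially bookkeeping: verifying that $J''$ is an ideal, translating the smash-product hypothesis through $\varphi$ and $\Psi$, and tracking the compatibilities already recorded in Subsection~\ref{pinftymaps} between $\varphi$, $\Phi$, $\lambda_0$, $\lambda_1$ and $\Psi$.
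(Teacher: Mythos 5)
Your overall architecture (produce a $(\Mor_P)_{\infty}$\nobreakdash-algebra structure on the localization map, rectify it with $\Phi_!Q$, then restrict colours via $\lambda_0,\lambda_1$ and invoke Lemma~\ref{rectifying}) is the paper's architecture, and your last paragraph is essentially the paper's argument. The genuine problem is the step where you obtain the $(\Mor_P)_{\infty}$\nobreakdash-structure: you apply Theorem~\ref{mainthm} to the cofibrant operad $(\Mor_P)_{\infty}$ with the ideal $J''=(\{0\}\times C)\cup(\{1\}\times J)$ and the doubled algebra ${\bf V}$. The hypothesis of Theorem~\ref{mainthm} then requires, for every tuple $((i_1,c_1),\ldots,(i_n,c_n);(i,c))$ on which $\Mor_P$ is nonempty, that the smash product whose $k$th factor is the identity of $X(c_k)$ when $i_k=0$ (or $c_k\in J$) and is $\eta_{X(c_k)}$ when $i_k=1$ and $c_k\notin J$ be an $L$\nobreakdash-equivalence. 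These \emph{mixed} smashes are not what Theorem~\ref{javierslemma} assumes: the hypothesis only covers the smashes in which \emph{every} factor with colour outside $J$ carries $\eta$. Already for $P=\Ass$ and $J=\emptyset$ your argument needs $\mathrm{id}_R\wedge\eta_R$ to be an $L$\nobreakdash-equivalence, while the hypothesis gives only $\eta_R^{\wedge n}$; the implication "$\eta_R$ and $\eta_R\wedge\eta_R$ are $L$\nobreakdash-equivalences, hence $\mathrm{id}_R\wedge\eta_R$ is" does not hold in general for localizations that do not commute with suspension --- smashing an $L$\nobreakdash-equivalence with an identity is exactly the delicate point that Theorem~\ref{f1smashf2} and the connectivity hypotheses in Corollary~\ref{monster} are designed to control. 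So the phrase "is an $L$\nobreakdash-equivalence by the hypothesis of the theorem" is not justified, and as written your proof establishes the theorem only under a strictly stronger hypothesis than the one stated.

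The fix is the route the paper takes, and it makes your auxiliary algebra ${\bf V}$ unnecessary: apply Theorem~\ref{mainthm} only to $P_{\infty}$ with the ideal $J$ (its hypothesis is verbatim the hypothesis of Theorem~\ref{javierslemma}, since $P_{\infty}$ is empty exactly where $P$ is). This gives a $P_{\infty}$\nobreakdash-algebra structure on $L{\bf X}$ with $\eta_{\bf X}$ a map of $P_{\infty}$\nobreakdash-algebras; by the strict correspondence of Subsection~\ref{mulmoralg} between algebra maps and algebras over the morphism operad, $\eta_{\bf X}$ is then \emph{tautologically} an algebra over $\Mor_{P_{\infty}}$, and pulling back along the lifting $\Psi$ of~(\ref{lifting}) makes it a $(\Mor_P)_{\infty}$\nobreakdash-algebra --- no second application of Theorem~\ref{mainthm}, hence no mixed smash conditions. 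From that point on your rectification and colour-restriction steps (using~(\ref{counit}), (\ref{jajajaja}), (\ref{fancy}) and Lemma~\ref{rectifying}) go through as you describe, with $\xi_{\bf X}=F\Phi_!Q\eta_{\bf X}$ functorial because $Q$, $F$ and $\Phi_!$ are functors; your concern about naturality of the lift is legitimate but secondary to the gap above.
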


\begin{proof}
Let $\varphi\colon P_{\infty}\longrightarrow P$ be a cofibrant resolution of~$P$, and
let $(\varphi_{!},\varphi^*)$ be the corresponding Quillen equivalence between the categories
of $P_{\infty}$\nobreakdash-algebras and $P$\nobreakdash-algebras, as in~(\ref{quillenpair}).
We view ${\bf X}$ as a $P_{\infty}$\nobreakdash-algebra via~$\varphi^*$.
Let $\Phi\colon (\Mor_P)_{\infty}\longrightarrow \Mor_P$ be a cofibrant resolution 
of~$\Mor_P$, and let
\[
\Phi_! : {\rm Alg}_{(\Mor_P)_{\infty}}(\M)\rightleftarrows {\rm Alg}_{\Mor_P}(\M) : \Phi^*
\]
be the corresponding adjoint pair.

Since $P_{\infty}(c_1,\ldots,c_n;c)$ is nonempty precisely when $P(c_1,\ldots,c_n;c)$ is nonempty,
it follows from Theorem~\ref{mainthm} that $L{\bf X}$ admits a $P_{\infty}$\nobreakdash-algebra
structure such that $\eta_{\bf X}$ is a map of $P_{\infty}$\nobreakdash-algebras.
Thus $\eta_{\bf X}$ is an algebra over $\Mor_{P_{\infty}}$, and it is also
an algebra over $\left(\Mor_P\right)_{\infty}$ using~(\ref{lifting}).
If $Q$ denotes a cofibrant replacement functor on $(\Mor_P)_{\infty}$\nobreakdash-algebras
and $F$ denotes a fibrant replacement functor on $\Mor_P$\nobreakdash-algebras, then
there are weak equivalences of $\left(\Mor_P\right)_{\infty}$\nobreakdash-algebras
\begin{equation}
\label{mainzigzag}
\xymatrix{
\eta_{\bf X} & \ar[l]_-{\simeq} Q\eta_{\bf X} \ar[r]^-{\simeq} & \Phi^*F\Phi_{!}Q\eta_{\bf X}.
}
\end{equation}
Hence, $\eta_{\bf X}$ is weakly equivalent to $\Phi^*\xi_{\bf X}$ as a $(\Mor_P)_{\infty}$\nobreakdash-algebra,
where
\[
\xi_{\bf X}=F\Phi_{!}Q\eta_{\bf X}.
\]
Note that $\xi_{\bf X}$ depends functorially on~${\bf X}$.
Hence, if we denote by $D{\bf X}$ the domain of $\xi_{\bf X}$
and by $T{\bf X}$ its target, then $D$ and $T$ are endofunctors
in the category of $P$\nobreakdash-algebras.

For $i\in\{0,1\}$, let $\alpha_i$ denote the inclusion of $C$ into $\{0,1\}\times C$ as $\alpha_i(c)=(i,c)$,
and choose a lifting $\lambda_i$ as in (\ref{jajajaja}),
\begin{equation}
\label{lambdai}
\xymatrix{
& (\alpha_i)^*(\Mor_P)_{\infty} \ar[d]^{(\alpha_i)^*\Phi} \\
P_{\infty} \ar@{.>}[ur]^{\lambda_i} \ar[r]^-{\varphi} & P.
}
\end{equation}
Now, if we apply $(\lambda_i)^*(\alpha_i)^*$ to~(\ref{mainzigzag}), we obtain weak
equivalences of $P_{\infty}$\nobreakdash-algebras. Let us choose first $i=0$. On one hand, using~(\ref{lambdai}),
\[
(\lambda_0)^*(\alpha_0)^*\Phi^*\xi_{\bf X}=
(\lambda_0)^*((\alpha_0)^*\Phi)^* D{\bf X}=
\varphi^*D{\bf X}.
\]
On the other hand, it follows from (\ref{fancy}) that
\[
(\lambda_0)^*(\alpha_0)^*\eta_{\bf X}=\varphi^*{\bf X}.
\]
Therefore, Lemma~\ref{rectifying} implies that ${\bf X}\simeq D{\bf X}$ as $P$\nobreakdash-algebras,
and the argument given in the proof of Lemma~\ref{rectifying} preserves naturality.
Secondly, for $i=1$ we obtain similarly weak equivalences of $P_{\infty}$\nobreakdash-algebras
\[
\xymatrix{
L\bf X & (\lambda_1)^*(\alpha_1)^*Q\eta_{\bf X} \ar[l]_-{\simeq} \ar[r]^-{\simeq} & \varphi^*T{\bf X},
}
\]
as claimed.
\end{proof}

\begin{rem}
If the assumption that $X(c)$ is cofibrant for all $c$ is not satisfied,
then $L{\bf X}$ need not be a $P_{\infty}$\nobreakdash-algebra and 
$\eta_{\bf X}$ need not be an algebra over $(\Mor_P)_{\infty}$.
In fact, Theorem~\ref{javierslemma} still holds, although we can only deduce that
$\eta_{\bf X}$ and $\xi_{\bf X}$ are naturally weakly equivalent as arrows in~$\M^C$,
and ${\bf X}\simeq D{\bf X}$ as $P$\nobreakdash-algebras.
To prove this, pick a functorial cofibrant replacement of $\bf X$
as a $P$\nobreakdash-algebra, ${\bf X}'\longrightarrow{\bf X}$.
By Lemma~\ref{cofibrantcomponents}, each $X'(c)$ is then cofibrant.
Therefore the argument proceeds for ${\bf X}'$ in the same way as above,
and we reach the conclusion that $\eta_{{\bf X}'}$ is 
naturally weakly equivalent as an algebra over $(\Mor_P)_{\infty}$ to a map
$\xi_{\bf X}\colon D{\bf X}\longrightarrow T{\bf X}$
of $P$\nobreakdash-algebras, still depending functorially on~$\bf X$, where $D{\bf X}$
is weakly equivalent to ${\bf X}'$ (and hence to~$\bf X$) as a $P$\nobreakdash-algebra.
Since $\eta_{\bf X}$ and $\eta_{{\bf X}'}$ are naturally weakly equivalent as arrows in~$\M^C$,
we have completed the argument.
\end{rem}

In summary, $L$ is weakly equivalent in $\M^C$ to a functor $T$ 
that sends $P$\nobreakdash-algebras to $P$\nobreakdash-algebras
(where $P$ is any coloured operad, not necessarily cofibrant).
However, $T$ is not coagumented, that is, there is no natural map ${\bf X}\longrightarrow
T{\bf X}$ in general.

Theorem~\ref{javierslemma} specializes to the following conclusive results. First we state the
preservation of (strict) ring spectra:

\begin{thm}
\label{strictringspectra}
Let $(L,\eta)$ be a homotopical localization on symmetric spectra. If~$R$ is a ring spectrum, then
$\eta_R\colon R\longrightarrow LR$ is
naturally weakly equivalent, as a map of spectra, 
to a ring morphism $\xi_R\colon DR\longrightarrow TR$,
provided that $L$ commutes with suspension or $R$ and $LR$ are connective.
Moreover, $DR\simeq R$ as ring spectra, and, if $R$ is commutative, then $DR$ and $TR$
can be chosen to be commutative.
\end{thm}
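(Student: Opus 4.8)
The plan is to deduce this from Theorem~\ref{javierslemma} applied to the appropriate (non-cofibrant) coloured operad. For the statement about a single ring spectrum~$R$, take $C=\{*\}$ and $P=\Ass$, viewed as a one-coloured operad, with empty ideal $J=\emptyset$; then a $P$-algebra is precisely a (strict) ring spectrum. First I would arrange that $R$ is cofibrant as a spectrum: replace $R$ by a functorial cofibrant replacement $R'\longrightarrow R$ as a $\Ass$-algebra and invoke Lemma~\ref{cofibrantcomponents} (with one colour) to see that the underlying spectrum of~$R'$ is cofibrant; since $\eta_R$ and $\eta_{R'}$ are naturally weakly equivalent as maps of spectra, it is harmless to assume $R$ itself is cofibrant. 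Next I must verify the smash-product hypothesis of Theorem~\ref{javierslemma}: that $\eta_R\wedge\cdots\wedge\eta_R$ is an $L$-equivalence for every finite number of factors. If $L$ commutes with suspension this follows from Theorem~\ref{f1smashf2}(i), and if $R$ and $LR$ are connective it follows from Theorem~\ref{f1smashf2}(ii) (taking all $X_i=R$, $Y_i=LR$, both connective). With these hypotheses in force, Theorem~\ref{javierslemma} produces a functorial map $\xi_R\colon DR\longrightarrow TR$ of $\Ass$-algebras --- i.e.\ a morphism of strict ring spectra --- together with: (i) a natural weak equivalence $R\simeq DR$ as $\Ass$-algebras (ring spectra); (ii) a natural weak equivalence $LR\simeq TR$ as $A_\infty$-algebras; and (iii) a natural weak equivalence between $\eta_R$ and $\xi_R$ as algebras over $(\Mor_{\Ass})_\infty$, in particular as maps of spectra. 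This is exactly the asserted conclusion, with the clause ``$DR\simeq R$ as ring spectra'' being precisely~(i).

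For the commutative case, I would simply rerun the same argument with $P=\Com$ in place of $\Ass$. A $\Com$-algebra in symmetric spectra with the positive stable model structure is a commutative ring spectrum, so the cofibrant resolution $E_\infty=\Com_\infty\longrightarrow\Com$ and Theorem~\ref{javierslemma} now yield $\xi_R\colon DR\longrightarrow TR$ with $DR,TR$ strict \emph{commutative} ring spectra and $DR\simeq R$ as commutative ring spectra; the naturality and the identification of $\eta_R$ with $\xi_R$ up to natural weak equivalence are as before. (This is where the choice of the positive model structure matters, as noted in Subsection~\ref{nowspectra}: it is what makes $\Com$-algebras rectifiable, via \cite{EM06}.) Since the commutative structure forgets to the associative one, the map $\xi_R$ obtained from $\Com$ is in particular a ring morphism, so one genuinely obtains the strengthening ``$DR$ and $TR$ can be chosen commutative.''

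The only real subtlety --- and the step I expect to be the main obstacle to write cleanly --- is the reduction to the cofibrant case together with the bookkeeping of what ``naturally weakly equivalent'' means at each stage: one must keep track that the zig-zag equivalences coming out of Theorem~\ref{javierslemma} are through $\Ass$-algebras (resp.\ $\Com$-algebras) in parts (i), through $A_\infty$-algebras (resp.\ $E_\infty$-algebras) in part~(ii), and through $(\Mor_{\Ass})_\infty$-algebras (resp.\ $(\Mor_{\Com})_\infty$-algebras) in part~(iii), and that composing these with the cofibrant-replacement equivalence $R'\longrightarrow R$ still gives natural weak equivalences \emph{of maps of spectra} for the final statement about $\eta_R$. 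Everything else is a direct application of the already-established Theorem~\ref{javierslemma} and Theorem~\ref{f1smashf2}; no new input is required.
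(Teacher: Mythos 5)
Your proposal is correct and follows essentially the same route as the paper: Theorem~\ref{strictringspectra} is obtained there by applying Theorem~\ref{javierslemma} with $P=\Ass$ and $P=\Com$ and $J=\emptyset$, the smash-power hypothesis being checked via Theorem~\ref{f1smashf2} exactly as you do (cf.\ Corollary~\ref{monster}). Your reduction to the case of a cofibrant underlying spectrum via a cofibrant replacement as a $P$-algebra and Lemma~\ref{cofibrantcomponents} is precisely the content of the Remark following Theorem~\ref{javierslemma}, so no new input is needed.
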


\begin{proof}
This follows from Theorem~\ref{javierslemma} by choosing $P=\Ass$ and $P=\Com$,
with $J$~empty in each case.
\end{proof}

An analogous result holds for $R$\nobreakdash-modules, as stated below. Here another subtlety
arises since, at a first attempt, localizing an $R$\nobreakdash-module will yield an $R'$\nobreakdash-module
where $R'$ is weakly equivalent to~$R$, although they are in principle distinct. This difficulty
is surmounted by means of the following remarks.

If $R$ is any ring spectrum, we endow the category of left $R$\nobreakdash-modules with the model structure of
\cite[Corollary 5.4.2]{HSS00}; that is, weak equivalences are $R$\nobreakdash-module morphisms that are weak
equivalences of the underlying spectra, and fibrations are $R$\nobreakdash-module morphisms that are positive
fibrations of the underlying spectra. This is coherent with the model structure that we are
considering on the category of $\LMod_{\Ass}$\nobreakdash-algebras, by associating each $R$\nobreakdash-module $M$
with the pair $(R,M)$.
If $R$ is commutative, we also consider the analogous
model structure on the category of $R$\nobreakdash-algebras, as given by \cite[Corollary 5.4.3]{HSS00}.

If $\rho\colon R\longrightarrow R'$ is a morphism of ring spectra, then restriction
sends every left $R'$\nobreakdash-module $M$ to the left $R$\nobreakdash-module $\rho^*M$
(which is the same spectrum $M$ with the module structure given by composition with~$\rho$),
and induction sends every left $R$\nobreakdash-module $N$ to the
left $R'$\nobreakdash-module $R'\wedge_R N$, where $R$ acts on $R'$ via~$\rho$.
It then follows that, if $\rho$ is a weak equivalence of ring spectra, then
the model categories of left $R$\nobreakdash-modules and left $R'$\nobreakdash-modules 
are Quillen equivalent via induction and restriction, by \cite[Theorem~5.4.5]{HSS00}.
More generally, the following holds:

\begin{lem}
\label{changeofrings}
Let $R$ and $R'$ be weakly equivalent ring spectra.
Then every left $R'$\nobreakdash-module $M$ is naturally weakly equivalent
as a spectrum to the $R$\nobreakdash-module
\[
R\wedge_{QR}Q''(FR'\wedge_{R'}Q'M),
\]
where $Q$ is a cofibrant replacement functor and $F$ is a fibrant replacement functor
on ring spectra, while $Q'$ is a cofibrant replacement functor on left
$R'$\nobreakdash-modules and $Q''$ is a cofibrant replacement functor on left $FR'$\nobreakdash-modules.
\end{lem}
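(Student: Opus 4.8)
The plan is to build the claimed weak equivalence by following a zig-zag that first replaces $R'$ (and correspondingly $M$) by a cofibrant ring spectrum, then uses the Quillen equivalence for a weak equivalence of ring spectra, and finally passes to the fibrant replacement of $R'$ before inducting up to $R$. First I would fix the weak equivalence $R\simeq R'$ that is given; since the category of ring spectra carries a model structure (as recalled from \cite{Shi04} and \cite{EM06}) we may form $QR\longrightarrow R$ and $QR'\longrightarrow R'$, and similarly $R'\longrightarrow FR'$. The subtlety, as in Lemma~\ref{changeofrings}'s statement, is that $R$ and $R'$ are weakly equivalent but not connected by a single morphism in a fixed direction; hence the need to route through cofibrant and fibrant replacements.

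The key steps, in order, are as follows. Step one: replace $M$ by a cofibrant left $R'$\nobreakdash-module $Q'M\longrightarrow M$, which is a weak equivalence of underlying spectra by the definition of the model structure on $R'$\nobreakdash-modules from \cite[Corollary 5.4.2]{HSS00}. Step two: apply the induction functor along $R'\longrightarrow FR'$ to obtain $FR'\wedge_{R'}Q'M$, a left $FR'$\nobreakdash-module; since $Q'M$ is cofibrant and $R'\longrightarrow FR'$ is a weak equivalence of ring spectra, the unit map $Q'M\longrightarrow FR'\wedge_{R'}Q'M$ is a weak equivalence of spectra by \cite[Theorem~5.4.5]{HSS00}. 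Step three: take a cofibrant replacement $Q''(FR'\wedge_{R'}Q'M)\longrightarrow FR'\wedge_{R'}Q'M$ in left $FR'$\nobreakdash-modules, again a weak equivalence of underlying spectra. Step four: now choose a morphism of ring spectra $QR\longrightarrow FR'$ realizing the given weak equivalence $R\simeq R'$ (this exists because $QR$ is cofibrant and $FR'$ is fibrant in the model category of ring spectra, and $R$ and $R'$ are in the same weak equivalence class); then induct along $QR\longrightarrow FR'$ backwards, i.e.\ restrict $Q''(FR'\wedge_{R'}Q'M)$ to a $QR$\nobreakdash-module and observe restriction does not change the underlying spectrum; finally induct along $QR\longrightarrow R$ to form $R\wedge_{QR}Q''(FR'\wedge_{R'}Q'M)$, and since $Q''(FR'\wedge_{R'}Q'M)$ is cofibrant over $QR$ and $QR\longrightarrow R$ is a weak equivalence of ring spectra, the unit map is once more a weak equivalence of underlying spectra. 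Composing the zig-zag of underlying weak equivalences gives the claim, and each functor used is functorial in $M$, so the equivalence is natural.

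The main obstacle I expect is Step four: producing the morphism of ring spectra $QR\longrightarrow FR'$ that witnesses $R\simeq R'$. A bare weak equivalence between $R$ and $R'$ need not be a zig-zag that can be rigidified into a single map, but using that $QR$ is cofibrant and $FR'$ is fibrant one can lift any zig-zag of weak equivalences in the model category of ring spectra to an actual morphism $QR\longrightarrow FR'$ inducing the same map in the homotopy category; this is the standard fact that $[QR,FR']$ computes morphisms in $\Ho$ of ring spectra, so there is a map realizing the given class, and any such map is automatically a weak equivalence of ring spectra. Once this map is in hand, all the remaining steps are applications of \cite[Theorem~5.4.5]{HSS00} together with the elementary observation that restriction of modules along any ring map is the identity on underlying spectra. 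I would also remark that the precise choice of replacement functors $Q,F,Q',Q''$ is immaterial up to natural weak equivalence, which is why the statement is phrased with a definite formula; naturality in $M$ is clear because every constituent ($Q'$, induction along $R'\to FR'$, $Q''$, restriction, induction along $QR\to R$) is a functor.
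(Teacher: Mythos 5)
Your zig-zag
\[
M \longleftarrow Q'M \longrightarrow FR'\wedge_{R'}Q'M \longleftarrow Q''(FR'\wedge_{R'}Q'M) \longrightarrow R\wedge_{QR}Q''(FR'\wedge_{R'}Q'M),
\]
constructed after rigidifying the given weak equivalence into ring morphisms $R\longleftarrow QR\longrightarrow FR'\longleftarrow R'$ (using that $QR$ is cofibrant and $FR'$ is fibrant), is exactly the paper's proof, the only cosmetic difference being that the paper justifies the two induction steps by \cite[Lemma~5.4.4]{HSS00} (smashing a stable equivalence of right modules with a cofibrant left module) where you invoke the Quillen equivalence of \cite[Theorem~5.4.5]{HSS00}. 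Note only that your assertion that $Q''(FR'\wedge_{R'}Q'M)$ is \emph{cofibrant} over $QR$ is not automatic (restriction along a ring map need not preserve cofibrancy), but this is the same point the paper leaves implicit in its appeal to Lemma~5.4.4, so it does not make your route different from theirs.
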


\begin{proof}
If $R$ and $R'$ are weakly equivalent as ring spectra, there are ring morphisms
\[
R\longleftarrow QR \longrightarrow FR' \longleftarrow R'
\]
that are weak equivalences, since $QR$ is cofibrant and $FR'$ is fibrant.
Using these morphisms, we may view $FR'$ as a right $R'$\nobreakdash-module and $R$ as a right $QR$\nobreakdash-module.
By~\cite[Lemma~5.4.4]{HSS00}, smashing with a cofibrant
left module converts weak equivalences of right modules into weak equivalences of spectra.
Hence, the zig-zag of weak equivalences
\[
M \longleftarrow Q'M \longrightarrow FR'\wedge_{R'}Q'M \longleftarrow Q''(FR'\wedge_{R'}Q'M)
\longrightarrow R\wedge_{QR}Q''(FR'\wedge_{R'}Q'M)
\]
proves our claim.
\end{proof}

If $R$ and $R'$ are commutative, then induction and restriction
also yield a Quillen equivalence between the model categories of $R$\nobreakdash-algebras and $R'$\nobreakdash-algebras.

\begin{thm}
\label{preservmod}
Let $(L,\eta)$ be a homotopical localization on symmetric spectra. Let $R$ be
a ring spectrum and $M$ a left $R$\nobreakdash-module.
Suppose either that $L$ commutes with suspension or that $R$ is connective.
Then $\eta_M\colon M\longrightarrow LM$ is naturally weakly equivalent to a morphism
$\xi_M\colon DM\longrightarrow TM$ of left $R$\nobreakdash-modules 
where $DM\simeq M$ as $R$\nobreakdash-modules.
\end{thm}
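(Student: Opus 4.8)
The plan is to derive this from Theorem~\ref{javierslemma} applied to the non-symmetric two-coloured operad $\LMod_{\Ass}$ of Subsection~\ref{mulmod}, whose set of colours is $C=\{r,m\}$ and whose only nonempty terms sit over the tuples $(r,\stackrel{(n)}{\ldots},r;r)$ and $(r,\stackrel{(n)}{\ldots},r,m;m)$. The pair ${\bf X}=(R,M)$ is a $\LMod_{\Ass}$-algebra, and I would take the ideal $J=\{r\}$, which is one of the ideals relative to $\LMod_{\Ass}$ recorded in Subsection~\ref{mulmod}. With this choice, the extension of $(L,\eta)$ over $\M^C$ away from $J$ fixes the ring coordinate and applies $L$ only to the module coordinate, so that $L{\bf X}=(R,LM)$ and $\eta_{\bf X}=(\mathrm{id}_R,\eta_M)$. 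Note that no cofibrancy hypothesis on $\LMod_{\Ass}$ is needed here, since Theorem~\ref{javierslemma} internally replaces it by a cofibrant resolution.

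First I would check the smash-product hypothesis of Theorem~\ref{javierslemma}, namely that $(\eta_{\bf X})_{c_1}\wedge\cdots\wedge(\eta_{\bf X})_{c_n}$ is an $L$-equivalence whenever $\LMod_{\Ass}(c_1,\ldots,c_n;c)$ is nonempty. Over a tuple $(r,\ldots,r;r)$ this map is an identity, hence an $L$-equivalence; over a tuple $(r,\ldots,r,m;m)$ it has the form $\mathrm{id}_R\wedge\cdots\wedge\mathrm{id}_R\wedge\eta_M$, in which $\eta_M$ occurs exactly once. If $L$ commutes with suspension this is an $L$-equivalence by Theorem~\ref{f1smashf2}(i); if instead $R$ is connective, then $R\wedge\cdots\wedge R$ is connective, so that $\mathrm{id}_R\wedge\cdots\wedge\mathrm{id}_R$ is a weak equivalence between connective spectra and Theorem~\ref{f1smashf2}(iii) applies. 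Since $R$ and $M$ need not be cofibrant, I would run this argument with ${\bf X}$ replaced by a cofibrant $\LMod_{\Ass}$-algebra having cofibrant underlying spectra, exactly as in the Remark following Theorem~\ref{javierslemma}, so that the smash products above are the derived ones; that Remark then produces a map of $\LMod_{\Ass}$-algebras $\xi_{\bf X}\colon D{\bf X}\longrightarrow T{\bf X}$, functorial in ${\bf X}$, such that $D{\bf X}\simeq(R,M)$ as $\LMod_{\Ass}$-algebras and $\eta_{\bf X}$ is naturally weakly equivalent to $\xi_{\bf X}$ as an arrow in $\M^C$.

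It then remains to repackage $\xi_{\bf X}$ as a morphism of left $R$-modules. A morphism of $\LMod_{\Ass}$-algebras consists of a ring morphism $\rho\colon R_0\longrightarrow R_1$ on the $r$-coordinate together with a $\rho$-equivariant module morphism $\xi_M\colon DM\longrightarrow TM$ on the $m$-coordinate, where $R_0\simeq R$ and $DM\simeq M$ (from $D{\bf X}\simeq(R,M)$), and $\rho$ is itself a weak equivalence because as an arrow of spectra it is weakly equivalent to $\mathrm{id}_R$. Restricting $TM$ along $\rho$ turns $\xi_M$ into a morphism of left $R_0$-modules, still functorial in ${\bf X}$; then, applying the change-of-rings construction of Lemma~\ref{changeofrings} along a chain of weak equivalences of ring spectra joining $R_0$ to $R$, the arrow $\xi_M$ becomes naturally weakly equivalent, as a map of spectra, to a morphism of left $R$-modules, which I keep calling $\xi_M\colon DM\longrightarrow TM$; the same transport carries the equivalence $D{\bf X}\simeq(R,M)$ into an equivalence $DM\simeq M$ of left $R$-modules. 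Finally, $\eta_M$ is naturally weakly equivalent to $\xi_M$ as an arrow of spectra, being the $m$-coordinate of the equivalence $\eta_{\bf X}\simeq\xi_{\bf X}$, and the proof is complete. The main obstacle is precisely this last step: localizing $M$ yields \emph{a priori} only a module over a ring $R_1$ that is merely weakly equivalent to~$R$, and one must transport both $\xi_M$ and the equivalence $DM\simeq M$ into the model category of left $R$-modules while preserving functoriality on \emph{morphisms}, which is exactly what Lemma~\ref{changeofrings} is designed for; the accompanying cofibrancy bookkeeping needed to make the derived smash products and the module change-of-rings equivalences valid is the only other delicate point.
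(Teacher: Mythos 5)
Your proposal is correct and follows essentially the same route as the paper's proof: Theorem~\ref{javierslemma} applied to $\LMod_{\Ass}$ with ${\bf X}=(R,M)$ and ideal $J=\{r\}$, the smash-product hypothesis checked via Theorem~\ref{f1smashf2}, cofibrancy handled as in the remark after Theorem~\ref{javierslemma}, and a change of rings via Lemma~\ref{changeofrings}. The only point you gloss is the upgrade of $DM\simeq M$ from an equivalence of spectra to one of $R$\nobreakdash-modules, which the paper obtains by noting that the second component of a weak equivalence $D{\bf X}\to{\bf X}$ gives $M'\stackrel{\sim}{\to}\rho^*M$ with $M'$ cofibrant over $R'$ and $M$ fibrant, so its adjoint $R\wedge_{R'}M'\to M$ is a weak equivalence of $R$\nobreakdash-modules by the induction--restriction Quillen equivalence.
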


\begin{proof}
Choose $P=\LMod_{\Ass}$ and consider the $P$\nobreakdash-algebra ${\bf X}=(R,M)$ ---which depends
functorially on $M$--- and the ideal $J=\{r\}$. We may assume that
${\bf X}$ is fibrant as a $P$\nobreakdash-algebra (otherwise, use a fibrant replacement
and Lemma~\ref{changeofrings}). Now
Theorem~\ref{javierslemma} implies that
$\eta_{\bf X}\colon {\bf X}\longrightarrow L{\bf X}$ is naturally
weakly equivalent to a map of $P$\nobreakdash-algebras $\xi_{\bf X}\colon D{\bf X}\longrightarrow T{\bf X}$
which depends functorially on $\bf X$ (hence on~$M$) and
where, in addition, $D{\bf X}\simeq {\bf X}$ as $P$\nobreakdash-algebras. By composing $\xi_{\bf X}$,
if necessary, with a cofibrant replacement of $D{\bf X}$ as a $P$\nobreakdash-algebra,
we may assume that $D{\bf X}$ is cofibrant.

Let us denote $D{\bf X}=(R',M')$ and $T{\bf X}=(R'',M'')$, and let $\mu\colon M'\longrightarrow M''$
be the morphism of $R'$\nobreakdash-modules induced by $\xi_{\bf X}$ on the second variable, which is weakly
equivalent to $\eta_M\colon M\longrightarrow LM$ as a map of spectra.

Now a change of rings is required.
Since $D{\bf X}$ is cofibrant and ${\bf X}$ is fibrant, there is a weak
equivalence of $P$\nobreakdash-algebras $f\colon D{\bf X}\longrightarrow {\bf X}$.
If we consider the inclusion $\alpha\colon \{r\}\longrightarrow \{r,m\}$, then
$R=\alpha^*{\bf X}$ and $R'=\alpha^*D{\bf X}$, and, since $\alpha^* P=\Ass$,
we can infer that the restriction of $f$ to the first component,
$\rho\colon R'\longrightarrow R$, is a weak equivalence of rings.
In this situation, by Lemma~\ref{changeofrings}, $\mu\colon M'\longrightarrow M''$ is naturally weakly
equivalent to a morphism of $R$\nobreakdash-modules $\xi_M\colon DM\longrightarrow TM$, where $DM\simeq R\wedge_{R'}M'$
and $TM\simeq R\wedge_{R'}M''$. Hence, $\eta_M$ is naturally weakly equivalent
to a morphism of $R$\nobreakdash-modules, as claimed.

In order to compare $DM$ with $M$, note that, since
$f\colon D{\bf X}\longrightarrow {\bf X}$ is a map of $P$\nobreakdash-algebras,
its second component can be viewed as a morphism of $R'$\nobreakdash-modules
\begin{equation}
\label{secondcomponent}
M'\longrightarrow \rho^* M,
\end{equation}
which is also a weak equivalence of spectra, hence a weak equivalence of $R'$\nobreakdash-modules.
Here $M$ is fibrant, and from the fact that $D{\bf X}$ is cofibrant it follows that
$M'$ is cofibrant as an $R'$\nobreakdash-module (since it has the left lifting property with respect
to all trivial fibrations of $R'$\nobreakdash-modules).
Since induction and restriction set up a Quillen equivalence, the adjoint map of (\ref{secondcomponent}),
\[
R\wedge_{R'}M'\longrightarrow M,
\]
is a weak equivalence of $R$\nobreakdash-modules. This shows that $DM\simeq M$ as $R$\nobreakdash-modules.
\end{proof}

Although this result was stated for left $R$\nobreakdash-modules,
it also holds of course for right $R$\nobreakdash-modules or $R$\nobreakdash-$S$\nobreakdash-bimodules, either 
by repeating the argument using the appropriate coloured operads,
or by replacing the ring spectrum $R$ by $R^{\rm op}$ and $R\wedge S^{\rm op}$, respectively.

\begin{thm}
\label{penultim}
Let $(L,\eta)$ be a homotopical localization on symmetric spectra and $f\colon R\longrightarrow S$ a morphism
of ring spectra. If either $L$ commutes with suspension or $R$, $LR$, $S$, and $LS$ are connective,
then $\eta_f\colon f\longrightarrow Lf$ is naturally weakly equivalent to a map $Df\longrightarrow Tf$
of ring morphisms, where $Df\simeq f$ as such. Hence, $Lf$ is naturally weakly
equivalent to a morphism of ring spectra.
\end{thm}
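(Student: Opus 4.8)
The plan is to derive this from Theorem~\ref{javierslemma} in the same way that Theorems~\ref{strictringspectra} and~\ref{preservmod} were derived from it. The right coloured operad is $P=\Mor_{\Ass}$, the $\{0,1\}$\nobreakdash-coloured operad of Subsection~\ref{mulmoralg}: an algebra over $\Mor_{\Ass}$ in $\M$ is exactly a morphism of ring spectra, so the given $f$ corresponds to a $\Mor_{\Ass}$\nobreakdash-algebra ${\bf X}$ with $X(0)=R$ and $X(1)=S$, functorially in~$f$, and I would take $J=\emptyset$, so that the extension of $(L,\eta)$ over $\M^{\{0,1\}}$ localizes both components and $\eta_{\bf X}$ has components $\eta_R$ and~$\eta_S$. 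Since $\Mor_{\Ass}$ is not a cofibrant operad, Theorem~\ref{mainthm} cannot be applied directly; tolerating a non\nobreakdash-cofibrant~$P$ is precisely the point of Theorem~\ref{javierslemma}.

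Next I would check the hypothesis of Theorem~\ref{javierslemma}, namely that $(\eta_{\bf X})_{c_1}\wedge\cdots\wedge(\eta_{\bf X})_{c_n}$ is an $L$\nobreakdash-equivalence whenever $\Mor_{\Ass}(c_1,\ldots,c_n;c)$ is nonempty. The nonempty terms of $\Mor_{\Ass}$ are copies of $\Ass(n)$, occurring for every tuple with output colour~$1$ and, with output colour~$0$, only for tuples all of whose inputs are~$0$; so what must be shown is that every finite iterated smash product of copies of $\eta_R$ and~$\eta_S$ is an $L$\nobreakdash-equivalence. When $L$ commutes with suspension this is Theorem~\ref{f1smashf2}(i). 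When it does not, writing such a product as $\eta_{Z_1}\wedge(\eta_{Z_2}\wedge\cdots\wedge\eta_{Z_n})$ with each $Z_j\in\{R,S\}$ and iterating Theorem~\ref{f1smashf2}(ii)---a smash product of connective spectra being connective---the connectivity of $R$, $LR$, $S$, and~$LS$ is exactly what is needed, which is why these four connectivity hypotheses are imposed in the statement.

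Granting the hypothesis, Theorem~\ref{javierslemma}---used, as in the proof of Theorem~\ref{preservmod}, in the form that allows the underlying spectra $R$ and $S$ to be non\nobreakdash-cofibrant (replacing ${\bf X}$ by a cofibrant model of the $\Mor_{\Ass}$\nobreakdash-algebra if necessary)---produces a functorial map $\xi_{\bf X}\colon D{\bf X}\longrightarrow T{\bf X}$ of $\Mor_{\Ass}$\nobreakdash-algebras with ${\bf X}\simeq D{\bf X}$ as $\Mor_{\Ass}$\nobreakdash-algebras and with $\eta_{\bf X}$ naturally weakly equivalent to $\xi_{\bf X}$ as arrows in~$\M^{\{0,1\}}$. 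Translating back through Subsection~\ref{mulmoralg} then finishes the argument: $D{\bf X}$ and $T{\bf X}$ are ring morphisms $Df\colon DR\longrightarrow DS$ and $Tf\colon TR\longrightarrow TS$; $\xi_{\bf X}$ is a commutative square of ring morphisms, its two horizontal edges being ring maps because restricting a morphism of $\Mor_{\Ass}$\nobreakdash-algebras along the inclusions $\alpha_i$ of~(\ref{rhois}), for which $(\alpha_i)^{*}\Mor_{\Ass}=\Ass$, yields morphisms of $\Ass$\nobreakdash-algebras; $\eta_{\bf X}$ recovers $\eta_f$ as a map of spectra, the right-hand vertical being $Lf=L(f)$ by naturality of~$\eta$; the relation ${\bf X}\simeq D{\bf X}$ reads as $Df\simeq f$ as morphisms of ring spectra; and the target end of the zig\nobreakdash-zag exhibits $Lf$ as naturally weakly equivalent to the ring morphism~$Tf$. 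The commutative variant (not asked for in the statement) would be identical with $\Mor_{\Com}$ in place of $\Mor_{\Ass}$. The only step that really demands attention is the last-mentioned cofibrancy bookkeeping, where---just as for Theorem~\ref{preservmod}---one has to be sure that replacing ${\bf X}$ by a cofibrant model of the $\Mor_{\Ass}$\nobreakdash-algebra leaves the natural weak equivalence class of $\eta_{\bf X}=\eta_f$ undisturbed; everything else is a mechanical unwinding of the coloured\nobreakdash-operad formalism.
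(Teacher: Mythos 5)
Your proposal is correct and follows the paper's own route exactly: the paper proves Theorem~\ref{penultim} by applying Theorem~\ref{javierslemma} with $P=\Mor_{\Ass}$ and $J=\emptyset$, with the connectivity hypotheses present precisely to guarantee, via Theorem~\ref{f1smashf2}, that the relevant iterated smash products of $\eta_R$ and $\eta_S$ are $L$-equivalences. Your additional remarks on the non-cofibrant case and the restriction along $\alpha_i$ are just the details the paper leaves implicit, so there is nothing to change.
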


\begin{proof}
This follows from Theorem~\ref{javierslemma} by choosing $P=\Mor_{\Ass}$ and $J=\emptyset$.
\end{proof}

Note that this result implies Theorem~\ref{strictringspectra} 
by specializing $f$ to be the identity map of a ring spectrum~$R$.

\begin{thm}
\label{ultim}
Let $(L,\eta)$ be a homotopical localization on symmetric spectra and $g\colon M\longrightarrow N$
a morphism of left $R$\nobreakdash-modules, where $R$ is any ring spectrum.
If $L$ commutes with suspension or $R$ is connective, 
then $\eta_g\colon g\longrightarrow Lg$ is naturally weakly equivalent to a map $Dg\longrightarrow Tg$
of $R$\nobreakdash-module morphisms, where $Dg\simeq g$ as such. 
Hence, $Lg$ is naturally weakly equivalent to a morphism of $R$\nobreakdash-modules.
\end{thm}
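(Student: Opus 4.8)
The plan is to obtain Theorem~\ref{ultim} from Theorem~\ref{javierslemma} by the same device already used in Theorems~\ref{penultim} and~\ref{preservmod}, now applied to the coloured operad that encodes a morphism of modules. Take $P=\Mor_Q$ with $Q=\LMod_{\Ass}$, as in Subsections~\ref{mulmod} and~\ref{mulmoralg}, so that the set of colours is $D=\{0,1\}\times\{r,m\}$, and use the ideal $J=\{(0,r),(1,r)\}$, exactly as in part~(v) of Corollary~\ref{monster} (and part~(v) of Corollary~\ref{cormodules}); this $J$ is directly checked to be an ideal relative to $\Mor_Q$, since the only tuples on which $\Mor_Q$ is nonzero with output a ring colour have all inputs ring colours. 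A morphism $g\colon M\longrightarrow N$ of left $R$\nobreakdash-modules corresponds to the $\Mor_Q$\nobreakdash-algebra ${\bf X}$ whose two ring components both equal $R$ and are joined by the identity of~$R$, whose module components are $M$ and $N$, and whose module map is~$g$; it depends functorially on~$g$. As in the proof of Theorem~\ref{preservmod}, we may assume ${\bf X}$ fibrant as a $\Mor_Q$\nobreakdash-algebra (replacing it by a fibrant model otherwise), and, after composing with a cofibrant replacement, cofibrant as well, so that by Lemma~\ref{cofibrantcomponents} all four components are cofibrant spectra.

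Next I would verify the hypotheses of Theorem~\ref{javierslemma}. With $J=\{(0,r),(1,r)\}$ the extension of $(L,\eta)$ over $\M^D$ is the identity on the two ring components and agrees with $\eta_M$ and $\eta_N$ on the two module components. Every tuple $(c_1,\dots,c_n;c)$ on which $\Mor_Q$ is nonempty and whose output $c$ is a module colour has precisely one module colour among its inputs and all others ring colours; hence $(\eta_{\bf X})_{c_1}\wedge\cdots\wedge(\eta_{\bf X})_{c_n}$ is, after rearranging smash factors, of the form ${\rm id}_R\wedge\cdots\wedge{\rm id}_R\wedge\eta_M$ or ${\rm id}_R\wedge\cdots\wedge{\rm id}_R\wedge\eta_N$. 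This is an $L$\nobreakdash-equivalence by Theorem~\ref{f1smashf2}: case~(i) if $L$ commutes with suspension, and case~(iii) if $R$ is connective, since then every finite smash power of $R$ is connective and ${\rm id}_R\wedge\cdots\wedge{\rm id}_R$ is a weak equivalence between connective spectra. For a tuple whose output is a ring colour there is nothing to prove, since $L$ acts by the identity on all the objects involved. This is the only place where the hypothesis ``$L$ commutes with suspension, or $R$ connective'' is used.

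Theorem~\ref{javierslemma} then produces a map $\xi_{\bf X}\colon D{\bf X}\longrightarrow T{\bf X}$ of $\Mor_Q$\nobreakdash-algebras, functorial in ${\bf X}$ and hence in~$g$, with ${\bf X}\simeq D{\bf X}$ as $\Mor_Q$\nobreakdash-algebras and with $\eta_{\bf X}$ naturally weakly equivalent to $\xi_{\bf X}$; restricting the latter equivalence to the two module components exhibits $\eta_g\colon g\longrightarrow Lg$ up to natural weak equivalence. Unwinding the $\Mor_Q$\nobreakdash-algebra structure, $D{\bf X}$ amounts to ring spectra $A'$ and $B'$ joined by a ring map $A'\longrightarrow B'$, an $A'$\nobreakdash-module $M'$, a $B'$\nobreakdash-module $N'$, and an $A'$\nobreakdash-linear map $M'\longrightarrow N'$ (with $N'$ viewed as an $A'$\nobreakdash-module through $A'\longrightarrow B'$); likewise $T{\bf X}$ gives $A''$, $B''$, $M''$, $N''$, and $\xi_{\bf X}$ restricts to a compatible family of maps between these data. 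Restricting the equivalence ${\bf X}\simeq D{\bf X}$ along the colour inclusions $\alpha_0\colon\{r,m\}\longrightarrow D$ and $\{r\}\hookrightarrow\{r,m\}$ (so that the operads in play become $\LMod_{\Ass}$ and then $\Ass$), and using as in Theorem~\ref{preservmod} that $D{\bf X}$ is cofibrant and ${\bf X}$ fibrant, yields a weak equivalence of ring spectra $A'\simeq R$ under which $M'\longrightarrow N'$ becomes weakly equivalent, as a map of spectra, to~$g$; symmetrically $A''\simeq R$ and $M''\longrightarrow N''$ corresponds to $Lg$.

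The last step is the change of rings, carried out just as at the end of the proof of Theorem~\ref{preservmod}: applying Lemma~\ref{changeofrings} naturally along $A'\simeq R$ and $A''\simeq R$ turns the $A'$\nobreakdash-linear map $M'\longrightarrow N'$ into a morphism $Dg\colon DM\longrightarrow DN$ of $R$\nobreakdash-modules and the $A''$\nobreakdash-linear map $M''\longrightarrow N''$ into a morphism $Tg\colon TM\longrightarrow TN$ of $R$\nobreakdash-modules, while naturality of the construction turns $\xi_{\bf X}$ into a map $Dg\longrightarrow Tg$ of $R$\nobreakdash-module morphisms that is naturally weakly equivalent to $\eta_g\colon g\longrightarrow Lg$. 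Chasing the equivalences and arguing as in Theorem~\ref{preservmod}---via the weak equivalence $D{\bf X}\longrightarrow{\bf X}$, cofibrancy of $D{\bf X}$, fibrancy of ${\bf X}$, and the Quillen equivalences between modules over weakly equivalent ring spectra---one upgrades $Dg\simeq g$ to an equivalence of $R$\nobreakdash-module morphisms and identifies $Tg$ with $Lg$, whence also $Lg$ is naturally weakly equivalent to a morphism of $R$\nobreakdash-modules. The main obstacle is precisely this transport of structure: Theorem~\ref{javierslemma} only delivers module morphisms over the auxiliary rings $A'$, $A''$, which are weakly equivalent to~$R$ but distinct from it and are each coupled to a further ring ($B'$, resp.\ $B''$) through the ring map inside $D{\bf X}$, resp.\ $T{\bf X}$; doing the bookkeeping that carries everything over to genuine $R$\nobreakdash-modules while retaining both naturality and the identification $Dg\simeq g$ is the delicate part of the argument. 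Once the coloured operad $\Mor_{\LMod_{\Ass}}$ and the ideal $J=\{(0,r),(1,r)\}$ are in place, the appeals to Theorems~\ref{javierslemma} and~\ref{f1smashf2} are routine, exactly as in the proofs of Theorems~\ref{penultim} and~\ref{preservmod}.
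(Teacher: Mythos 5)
Your proposal is correct and follows essentially the same route as the paper's proof: the coloured operad $\Mor_Q$ with $Q=\LMod_{\Ass}$ and the ideal $J=\{(0,r),(1,r)\}$, an application of Theorem~\ref{javierslemma} (with the smash hypothesis checked via Theorem~\ref{f1smashf2}, which the paper leaves implicit as in Corollary~\ref{monster}), restriction of colours as in Theorem~\ref{preservmod} to identify the auxiliary ring with $R$, and Lemma~\ref{changeofrings} for the final change of rings. The only cosmetic deviations are your initial fibrant/cofibrant replacement of ${\bf X}$ (the paper instead relies on the Remark following Theorem~\ref{javierslemma} when components are not cofibrant) and your use of both ring identifications $A'\simeq R$ and $A''\simeq R$, where the paper only needs the one coming from $D{\bf X}\simeq{\bf X}$.
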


\begin{proof}
Pick $P=\Mor_Q$ with $Q=\LMod_{\Ass}$ and $J=\{(0,r),(1,r)\}$.
If we denote by $\bf X$ the $P$\nobreakdash-algebra $(R,M)\longrightarrow (R,N)$ that
is the identity on the first variable and $g$ on the second variable, then
$\eta_{\bf X}\colon {\bf X}\longrightarrow L{\bf X}$ is a commutative diagram
\[
\xymatrix{
(R,M) \ar[d]_{({\rm id},g)} \ar[r]^-{({\rm id},\eta_M)} & (R,LM)\phantom{.} \ar[d]^{({\rm id},Lg)} \\
(R,N) \ar[r]^-{({\rm id},\eta_N)} & (R,LN).
}
\]
By Theorem~\ref{javierslemma}, this is naturally weakly equivalent to a
map $\xi_{\bf X}\colon D{\bf X}\longrightarrow T{\bf X}$ of $P$\nobreakdash-algebras, which we depict as
a commutative diagram
\[
\xymatrix{
(R',M') \ar[d]_{} \ar[r]^{} & (R'',M'')\phantom{.} \ar[d]^{(\rho,\nu)} \\
(T',N') \ar[r]^{} & (T'',N'').
}
\]
Here $\nu\colon M''\longrightarrow N''$ is therefore a morphism of $R'$\nobreakdash-modules.
From the fact that $D{\bf X}\simeq {\bf X}$ as $P$\nobreakdash-algebras it follows,
by restriction of colours as in the proof of Theorem~\ref{preservmod}, that $R'\simeq R$ as rings.
Thus Lemma~\ref{changeofrings} implies that $\nu$ is naturally weakly equivalent
to a morphism $Tg$ of $R$\nobreakdash-modules, and hence so is $Lg$.
\end{proof}

\section{Algebras over commutative ring spectra}
\label{algebras}
We finally discuss, as another application of our techniques,
the preservation of $R$\nobreakdash-algebras under homotopical localizations, where $R$ is a
commutative ring spectrum. For this, we first consider a convenient coloured operad.
In an arbitrary closed symmetric monoidal category~$\E$, choose $C=\{r, a\}$ and define
a $C$\nobreakdash-coloured operad $\mathcal{A}$ as follows:
\begin{equation}
\label{defRalg}
\mathcal{A}(c_1,\ldots,c_n;c)=\left\{
\begin{array}{l} \mbox{$0$ if $c=r$ and $c_k=a$ for some $k$,} \\[0.2cm]
\mbox{$I[\Sigma_n]/\sim$ otherwise,}
\end{array}
\right.
\end{equation}
where $I[\Sigma_n]$ denotes, as before, a coproduct of copies of the unit of $\E$ indexed
by the symmetric group $\Sigma_n$, and $\sim$ is the equivalence relation on $\Sigma_n$ defined in the
following way, similarly as in \cite[\S 9.3]{EM06}:
$\sigma\sim\sigma'$ if and only if, for all $i$ and $j$ such that $c_i=c_j=a$, the inequality
$\sigma(i)<\sigma(j)$ holds precisely when $\sigma'(i)<\sigma'(j)$ holds. For example,
\[
\mathcal{A}(r,\stackrel{(n)}{\ldots},r;r)=I
\quad
\mbox{and}
\quad
\mathcal{A}(a,\stackrel{(n)}{\ldots},a;a)=I[\Sigma_n].
\]
Thus, an algebra over $\mathcal{A}$ is a pair $(R,A)$ where $R$ is a commutative monoid
and $A$ is a (non\nobreakdash-commutative) monoid together with a ``central'' map $R\longrightarrow A$ given by
the structure map $\mathcal{A}(r;a)\otimes R\longrightarrow A$.
The ideals relative to $\mathcal{A}$ are $C$, $\{r\}$, and $\emptyset$.

The commutative algebras $A$ over a commutative monoid $R$
are the algebras over a $C$\nobreakdash-coloured operad defined as in (\ref{defRalg}),
but replacing $I[\Sigma_n]/\sim$ with~$I$. Note that the resulting coloured operad
precisely coincides with $\Mor_{\Com}$ after substituting $\{r,a\}$ by $\{0,1\}$.
Indeed, a commutative $R$\nobreakdash-algebra $A$ is nothing else but a morphism
$R\longrightarrow A$ of commutative monoids.

We now choose $\E$ to be the category of simplicial sets, acting on the
category $\M$ of symmetric spectra over simplicial sets with the positive stable model structure.
Then an algebra over $\mathcal{A}$ in $\M$ is a pair $(R,A)$ where $R$ is a commutative
ring spectrum and $A$ is an $R$\nobreakdash-algebra in the usual sense.

\begin{thm}
\label{Ralgebras}
Let $(L,\eta)$ be a homotopical localization on symmetric spectra.
Let $R$ be a commutative ring spectrum and let $A$ be an $R$\nobreakdash-algebra.
Suppose either that $L$ commutes with suspension or $R$, $A$, and $LA$ are connective.
Then $\eta_A\colon A\longrightarrow LA$ is naturally weakly equivalent to a morphism of
$R$\nobreakdash-algebras $\xi_A\colon DA\longrightarrow TA$ where $DA\simeq A$ as $R$\nobreakdash-algebras.
\end{thm}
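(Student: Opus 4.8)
The plan is to obtain this statement from Theorem~\ref{javierslemma} exactly as Theorem~\ref{preservmod} was obtained from it, with modules replaced by algebras throughout. One takes $P=\mathcal{A}$, the $\{r,a\}$\nobreakdash-coloured operad of~(\ref{defRalg}), whose algebras in $\M$ are precisely the pairs $(R,A)$ with $R$ a commutative ring spectrum and $A$ an $R$\nobreakdash-algebra, and the ideal $J=\{r\}$, so that the extension of $(L,\eta)$ over $\M^{\{r,a\}}$ localizes the $a$\nobreakdash-component but fixes the $r$\nobreakdash-component; thus $(\eta_{\bf X})_r=\mathrm{id}_R$ and $(\eta_{\bf X})_a=\eta_A$ for the $\mathcal{A}$\nobreakdash-algebra ${\bf X}=(R,A)$, which depends functorially on~$A$. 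As in the proof of Theorem~\ref{preservmod}, no cofibrancy of $R$ or $A$ as spectra is assumed; one replaces ${\bf X}$ by a cofibrant, and where needed a fibrant, replacement as an $\mathcal{A}$\nobreakdash-algebra, whose components are then cofibrant by Lemma~\ref{cofibrantcomponents}, so that the version of Theorem~\ref{javierslemma} recorded in the Remark following it applies.

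First I would verify the hypothesis of Theorem~\ref{javierslemma}: that $(\eta_{\bf X})_{c_1}\wedge\cdots\wedge(\eta_{\bf X})_{c_n}$ is an $L$\nobreakdash-equivalence whenever $\mathcal{A}(c_1,\ldots,c_n;c)$ is nonempty. For $c=r$ the only nonempty tuples are $(r,\ldots,r;r)$, for which the map is an identity; for $c=a$ every tuple occurs and, up to permuting the smash factors, the map has the form $\mathrm{id}_R^{\wedge k}\wedge\eta_A^{\wedge(n-k)}$. If $L$ commutes with suspension this is an $L$\nobreakdash-equivalence by Theorem~\ref{f1smashf2}(i). If not, one uses the connectivity of $R$, $A$, and $LA$: since smash products of connective spectra are connective, an iteration of Theorem~\ref{f1smashf2}(ii) shows first that $\eta_A^{\wedge m}$ is an $L$\nobreakdash-equivalence for every $m\ge1$, and then one more application of Theorem~\ref{f1smashf2}(ii), with $\mathrm{id}_R^{\wedge k}$ as the trivial $L$\nobreakdash-equivalence $f_1$ and $\eta_A^{\wedge(n-k)}$ as $f_2$, finishes the verification. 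This explains why precisely the connectivity of $R$, $A$, and $LA$ is demanded in the statement.

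Theorem~\ref{javierslemma} then produces a map $\xi_{\bf X}\colon D{\bf X}\longrightarrow T{\bf X}$ of $\mathcal{A}$\nobreakdash-algebras, functorial in $A$, with ${\bf X}\simeq D{\bf X}$ as $\mathcal{A}$\nobreakdash-algebras and $\eta_{\bf X}$ naturally weakly equivalent to $\xi_{\bf X}$. Writing $D{\bf X}=(R',A')$ and $T{\bf X}=(R'',A'')$, the $a$\nobreakdash-component of $\xi_{\bf X}$ is a morphism $A'\longrightarrow A''$ of $R'$\nobreakdash-algebras which is naturally weakly equivalent to $\eta_A$ as a map of spectra, and restricting a weak equivalence $D{\bf X}\longrightarrow{\bf X}$ of $\mathcal{A}$\nobreakdash-algebras (available once $D{\bf X}$ is cofibrant and ${\bf X}$ fibrant) along $\{r\}\hookrightarrow\{r,a\}$, using $\alpha^*\mathcal{A}=\Com$, yields a weak equivalence $\rho\colon R'\longrightarrow R$ of commutative ring spectra. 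The remaining step is a change of rings: by the Quillen equivalence between the model categories of $R'$\nobreakdash-algebras and of $R$\nobreakdash-algebras given by induction and restriction, together with the analogue of Lemma~\ref{changeofrings} for algebras over commutative ring spectra ---proved by the same zig\nobreakdash-zag, with ``smashing with a cofibrant module'' replaced by preservation of weak equivalences between cofibrant objects by the left Quillen functor $R\wedge_{R'}-$--- one replaces $A'\longrightarrow A''$ by a naturally weakly equivalent morphism $\xi_A\colon DA\longrightarrow TA$ of $R$\nobreakdash-algebras with $DA\simeq R\wedge_{R'}A'$. Finally, the $a$\nobreakdash-component of $D{\bf X}\longrightarrow{\bf X}$ is a weak equivalence of $R'$\nobreakdash-algebras $A'\longrightarrow\rho^*A$ with $A'$ cofibrant and $A$ fibrant, so its adjunct $R\wedge_{R'}A'\longrightarrow A$ is a weak equivalence of $R$\nobreakdash-algebras and $DA\simeq A$ as $R$\nobreakdash-algebras, as required. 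I expect the change\nobreakdash-of\nobreakdash-rings bookkeeping to be the main obstacle: one must check that cofibrancy of $D{\bf X}$ as an $\mathcal{A}$\nobreakdash-algebra forces $A'$ to be cofibrant as an $R'$\nobreakdash-algebra, and that the algebra analogue of Lemma~\ref{changeofrings} is genuinely functorial, so that every weak equivalence above may be chosen naturally in~$A$.
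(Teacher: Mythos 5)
Your proposal is correct and follows essentially the same route as the paper: apply Theorem~\ref{javierslemma} to the coloured operad $\mathcal{A}$ with the ideal $J=\{r\}$, and finish with a change of rings exactly as in the proof of Theorem~\ref{preservmod}, using the Quillen equivalence between $R'$\nobreakdash-algebras and $R$\nobreakdash-algebras. The only difference is that you spell out the verification of the smash\nobreakdash-product hypothesis via Theorem~\ref{f1smashf2} (which the paper leaves implicit), and this correctly accounts for the stated connectivity assumptions on $R$, $A$, and $LA$.
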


\begin{proof}
Use the coloured operad $\mathcal{A}$ described above and apply Theorem~\ref{javierslemma}
localizing away from the ideal $J=\{r\}$ and with a change of rings as a final step.
The details are the same as those in the proof of Theorem~\ref{preservmod}.
\end{proof}

\begin{thm}
\label{mapsofRalgebras}
Let $(L,\eta)$ be a homotopical localization on symmetric spectra.
Let $R$ be a commutative ring spectrum and let
$g\colon A\longrightarrow B$ be a morphism of $R$\nobreakdash-algebras.
Suppose that $L$ commutes with suspension or $R$, $A$, $LA$, $B$, and $LB$ are connective.
Then $\eta_g\colon g\longrightarrow Lg$ is naturally weakly equivalent to a map $Dg\longrightarrow Tg$
of $R$\nobreakdash-algebra morphisms, where $Dg\simeq g$ as such.
Hence, $Lg$ is naturally weakly equivalent to a~morphism of $R$\nobreakdash-algebras.
\end{thm}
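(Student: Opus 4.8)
The plan is to imitate the proof of Theorem~\ref{ultim}, replacing the coloured operad $\LMod_{\Ass}$ by the $\{r,a\}$-coloured operad $\mathcal{A}$ introduced in~(\ref{defRalg}), whose algebras in $\M$ are pairs $(R,A)$ with $R$ a commutative ring spectrum and $A$ an $R$-algebra. Accordingly, I would take $P=\Mor_{\mathcal{A}}$, a $D$-coloured operad with $D=\{0,1\}\times\{r,a\}$, and consider the $P$-algebra ${\bf X}$ with $X(0,r)=X(1,r)=R$, $X(0,a)=A$, $X(1,a)=B$, whose $\Mor_{\mathcal{A}}$-structure records the $R$-algebras $A$ and $B$ together with the morphism of $R$-algebras $g\colon A\longrightarrow B$; thus the associated morphism of $\mathcal{A}$-algebras ${\bf X}_0\longrightarrow{\bf X}_1$ is $(\mathrm{id}_R,g)$, and ${\bf X}$ depends functorially on~$g$. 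I would then localize away from the ideal $J=\{(0,r),(1,r)\}$ relative to~$P$; this is genuinely an ideal, since the underlying $\mathcal{A}$-tuple of any $\Mor_{\mathcal{A}}$-operation whose output colour lies in $\{(0,r),(1,r)\}$ but some input colour lies in $\{(0,a),(1,a)\}$ is zero by~(\ref{defRalg}).

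Next I would check the hypotheses of Theorem~\ref{javierslemma}. As in the proof of Theorem~\ref{preservmod} (and the Remark following Theorem~\ref{javierslemma}), I would first replace ${\bf X}$ by a cofibrant $P$-algebra, so that by Lemma~\ref{cofibrantcomponents} every component is cofibrant, while keeping a comparison with a fibrant model of ${\bf X}$ for the change-of-rings step. For the smash-product condition, I would observe that every nonempty tuple of $P$ with output colour in $\{(0,r),(1,r)\}$ has all input colours in $J$, so the relevant smash product is just a product of copies of $\mathrm{id}_R$; while for a tuple with output colour in $\{(0,a),(1,a)\}$ the inputs are an arbitrary mixture of $(0,r),(1,r),(0,a),(1,a)$ (subject only to the level constraint of $\Mor_{\mathcal{A}}$), so that $(\eta_{\bf X})_{c_1}\wedge\cdots\wedge(\eta_{\bf X})_{c_n}$ is a smash product of copies of $\mathrm{id}_R$, of $\eta_A$, and of $\eta_B$ (note that, unlike for $\LMod$, several copies of $\eta_A$ and $\eta_B$ may occur here, since $\mathcal{A}(a,\ldots,a;a)=I[\Sigma_n]$). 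If $L$ commutes with suspension, such a smash product is an $L$-equivalence by the argument of~(\ref{shift}); otherwise, connectivity of $R$, $A$, $LA$, $B$, and $LB$ makes every intermediate smash connective, and an iterated application of Theorem~\ref{f1smashf2}(ii) --- together with the observation made there that smashing with a weak equivalence, such as $\mathrm{id}_R$, only requires the other factor's domain to be connective --- shows it is an $L$-equivalence.

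With the hypotheses verified, Theorem~\ref{javierslemma} yields a map $\xi_{\bf X}\colon D{\bf X}\longrightarrow T{\bf X}$ of $P$-algebras, functorial in ${\bf X}$ and hence in~$g$, that is naturally weakly equivalent to $\eta_{\bf X}$ as a $(\Mor_P)_{\infty}$-algebra --- in particular as an arrow in $\M^D$ --- and with $D{\bf X}\simeq{\bf X}$ as $P$-algebras. Writing $D{\bf X}$ and $T{\bf X}$ as commutative squares of $\mathcal{A}$-algebras and extracting the second ($a$) variable, I obtain a morphism $\nu$ of $\mathcal{A}$-algebras over the commutative ring spectrum $R'$ appearing in $D{\bf X}$ which, as a square of spectra, is naturally weakly equivalent to $\eta_g\colon g\longrightarrow Lg$. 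Restricting $D{\bf X}\simeq{\bf X}$ to the colours $\{(0,r),(1,r)\}$, where $P$ restricts to $\Com$ and $\eta$ is the identity, gives $R'\simeq R$ as commutative ring spectra. Finally I would run the change-of-rings argument of Lemma~\ref{changeofrings} in its commutative form --- using the Quillen equivalence between the categories of $R'$-algebras and $R$-algebras recorded before Theorem~\ref{preservmod} --- to conclude that $\nu$ is naturally weakly equivalent to a morphism $\xi_g\colon Dg\longrightarrow Tg$ of $R$-algebra morphisms with $Dg\simeq g$ as such (the latter via the adjunction argument in the proof of Theorem~\ref{preservmod}). Hence $\eta_g$ is naturally weakly equivalent to $\xi_g$, and in particular $Lg$ is naturally weakly equivalent to a morphism of $R$-algebras.

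The main obstacle I anticipate is exactly this last step: transporting the $R'$-algebra structures along the weak equivalence $R'\simeq R$ functorially in~$g$ and compatibly with $\nu$, so that one ends up with genuine $R$-algebra maps rather than $R'$-algebra maps. This is what a commutative-ring-spectrum analogue of Lemma~\ref{changeofrings} must supply, just as in the proofs of Theorems~\ref{preservmod} and~\ref{Ralgebras}; once that is in place, everything else is a routine adaptation of the arguments already carried out for $R$-modules in Theorem~\ref{ultim}.
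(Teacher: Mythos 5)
Your proposal is correct and follows essentially the same route as the paper, whose (very terse) proof of Theorem~\ref{mapsofRalgebras} is precisely to apply Theorem~\ref{javierslemma} to the coloured operad $\Mor_{\mathcal{A}}$ with the ideal $J=\{(0,r),(1,r)\}$, with the smash-product hypothesis verified via Theorem~\ref{f1smashf2} and a change of rings (the commutative analogue of Lemma~\ref{changeofrings}, as in Theorems~\ref{preservmod}, \ref{ultim} and \ref{Ralgebras}) as the final step. One tiny caveat: in Theorem~\ref{f1smashf2}(iii) it is the domain of the weak equivalence itself (here $R$) that must be connective, not the other factor's domain, but since $R$ is connective by hypothesis this does not affect your argument.
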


\begin{proof}
For this, use $\Mor_{\mathcal{A}}$ and localize away from $J=\{(0,r),(1,r)\}$.
\end{proof}

If we let $(L,\eta)$ be localization with respect to an arbitrary homology theory, then we
essentially recover Theorem VIII.2.1 in \cite{EKMM97}, stating that Bousfield localizations preserve
$R$\nobreakdash-algebras for every commutative ring spectrum $R$. A minor complication comes from the fact
that \cite{EKMM97} is written in terms of $S$\nobreakdash-modules instead of symmetric spectra.
A comparison can be made precise as follows. Let $\Psi$ set up a Quillen equivalence
from the category of $R$\nobreakdash-algebras in symmetric spectra over simplicial sets to the
category of $\Psi R$\nobreakdash-algebras in $S$\nobreakdash-modules, and let $\Phi$ be its right adjoint;
see \cite{Sch01} for further details about this adjoint pair.

For an $R$\nobreakdash-algebra $A$ in symmetric spectra, we denote by $\eta_A\colon A\longrightarrow L_EA$
its $E_*$\nobreakdash-localization, where $E_*$ is any homology theory.
By Theorem~\ref{Ralgebras}, $\eta_A$ is weakly equivalent to a morphism $\xi_A\colon D_EA\longrightarrow T_EA$
of $R$\nobreakdash-algebras.
Similarly, denote by $\lambda_{\Psi A}\colon {\Psi A}\longrightarrow (\Psi A)_E$
the $E_*$\nobreakdash-localization map in the category of $S$\nobreakdash-modules, and endow $(\Psi A)_E$ with the
$\Psi R$\nobreakdash-algebra structure of \cite[Theorem VIII.2.1]{EKMM97}.

\begin{prop}
Let $E_*$ be any homology theory.
Let $R$ be a commutative ring spectrum and let $A$ be an $R$\nobreakdash-algebra in symmetric spectra.
Then the $\Psi R$\nobreakdash-algebras $\Psi (T_EA)$ and $(\Psi A)_E$ are naturally weakly equivalent.
\end{prop}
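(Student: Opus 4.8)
The plan is to recognize both $\Psi(T_EA)$ and $(\Psi A)_E$ as the $E_*$-localization of $\Psi A$ inside the category of $\Psi R$-algebras in $S$-modules, and then to invoke the uniqueness of localizations (the universal properties recalled in Section~\ref{preservation}) to conclude. The preliminary point to establish is what the Quillen equivalence $(\Psi,\Phi)$ does at the level of homology: on underlying objects the derived functor of $\Psi$ realizes the standard equivalence between the homotopy category of symmetric spectra and that of $S$-modules (see \cite{Sch01}), so it matches $E_*$ with its counterpart on $S$-modules, preserves and reflects $E_*$-equivalences and $E_*$-local objects (interpreted on underlying spectra), and carries $E_*$-localization maps to $E_*$-localization maps. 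Since weak equivalences of $\Psi R$-algebras are detected on underlying spectra, the notions of $E_*$-equivalence and $E_*$-local object in the category of $\Psi R$-algebras coincide with the corresponding underlying-spectrum notions.

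Next I would feed Theorem~\ref{Ralgebras} into this. It produces an $R$-algebra morphism $\xi_A\colon D_EA\longrightarrow T_EA$, functorial in $A$, with $D_EA\simeq A$ as $R$-algebras and $\xi_A$ weakly equivalent to $\eta_A$ as a map of spectra; in particular the underlying spectrum of $T_EA$ is $E_*$-local. Applying $\Psi$, after the cofibrant and fibrant replacements needed to compute the derived functor, yields a $\Psi R$-algebra $\Psi(T_EA)$ with $E_*$-local underlying $S$-module together with a $\Psi R$-algebra morphism $\Psi A\simeq\Psi(D_EA)\longrightarrow\Psi(T_EA)$ that is an $E_*$-localization map on underlying spectra. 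Thus this morphism exhibits $\Psi(T_EA)$ as an $E_*$-localization of $\Psi A$ in the category of $\Psi R$-algebras.

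On the other side, by \cite[Theorem~VIII.2.1]{EKMM97} and the treatment of localizations of algebras there, $\lambda_{\Psi A}\colon\Psi A\longrightarrow(\Psi A)_E$ is a $\Psi R$-algebra morphism whose target has $E_*$-local underlying spectrum, and the $E_*$-localization of $\Psi R$-algebras is computed on underlying spectra; hence $\lambda_{\Psi A}$ is terminal, in the homotopy category of $\Psi R$-algebras, among $\Psi R$-algebra $E_*$-equivalences out of $\Psi A$, and initial among maps out of $\Psi A$ to objects with $E_*$-local underlying spectrum. Applying this universal property to the morphism $\Psi A\longrightarrow\Psi(T_EA)$ constructed above, and using that an $E_*$-equivalence into an $E_*$-local object is a localization (as recalled in Section~\ref{preservation}), gives a weak equivalence $(\Psi A)_E\simeq\Psi(T_EA)$ of $\Psi R$-algebras under $\Psi A$. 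Since every construction in sight is functorial in $A$, this weak equivalence is realized by a zig-zag of natural weak equivalences after the standard functorial replacements, which is the assertion.

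The step I expect to be the main obstacle is the first one: verifying carefully that the comparison functor $\Psi$ identifies the two homology theories, and hence the two $E_*$-localization functors, and, closely related, that the localization $\lambda_{\Psi A}$ of \cite{EKMM97} genuinely enjoys a universal property \emph{in the category of $\Psi R$-algebras} rather than merely on underlying $S$-modules. Once those identifications are in place, the remainder is the formal uniqueness of localizations applied inside the homotopy category of $\Psi R$-algebras.
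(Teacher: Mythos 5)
Your argument is correct and follows essentially the same route as the paper: both use that $(\Psi,\Phi)$ preserve and reflect $E_*$\nobreakdash-equivalences and $E_*$\nobreakdash-local objects, apply $\Psi$ to the rectified localization $\xi_A\colon D_EA\longrightarrow T_EA$ from Theorem~\ref{Ralgebras}, and then invoke the universal property of the $\Psi R$\nobreakdash-algebra localization $\lambda_{\Psi A}$ of \cite[Theorem VIII.2.1]{EKMM97} to produce a natural weak equivalence between $(\Psi A)_E$ and $\Psi(T_EA)$. The paper's proof is just a compressed version of the same comparison, so no changes are needed.
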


\begin{proof}
The adjoint functors $(\Psi,\Phi)$ preserve and reflect $E_*$\nobreakdash-equivalences and $E_*$\nobreakdash-local spectra.
Therefore, $\Psi(\xi_A)$ is an $E_*$\nobreakdash-equivalence and $\Psi (T_EA)$ is $E$\nobreakdash-local. Hence, we infer from
\cite[Theorem VIII.2.1]{EKMM97} that $\lambda_{\Psi A}$ yields a natural map
\[
(\Psi A)_E\longrightarrow \Psi(T_EA)
\]
of $\Psi R$\nobreakdash-algebras which is a weak equivalence.
\end{proof}

In a different direction, we deduce that, for every \textit{connective} commutative ring spectrum $R$,
each \textit{connective} $R$\nobreakdash-algebra $A$ has a Postnikov tower consisting of $R$\nobreakdash-algebras.
Our argument is given below.
This result was proved by Lazarev in \cite[\S 8]{Laz01} with different methods,
extending previous results of Basterra and Kriz \cite[Theorem~8.1]{Bas99}. See also~\cite{DS06}.

\begin{prop}
Let $R$ be a connective, commutative, cofibrant ring spectrum, and let $A$ be a connective cofibrant $R$\nobreakdash-algebra.
For each $i\ge 0$ there are $R$\nobreakdash-algebra morphisms
\[
\xymatrix{
& A_{i+1} \ar[d]^{\tau_i} \\
A \ar[ru]^{\nu_{i+1}} \ar[r]^{\nu_{i}}& A_{i}
}
\]
such that the triangle commutes up to homotopy, $\nu_i$ induces isomorphisms on $\pi_n$
for $n\le i$, and $\pi_n(A_i)=0$ for $n > i$.
\end{prop}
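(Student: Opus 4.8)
The plan is to realize the spectrum‑level Postnikov tower of $A$ inside the category of $R$‑algebras by applying the rectification results of this section to the Postnikov section functors. For each $i\ge 0$ the $i$th Postnikov section $P_i$ is a homotopical localization on symmetric spectra; it does not commute with suspension, but the connective variants of Theorem~\ref{Ralgebras} and Theorem~\ref{mapsofRalgebras} do not require this. Moreover $P_i$ sends connective spectra to connective spectra, so, since $R$ and $A$ are connective, every spectrum that occurs in the construction below is connective and the connectivity hypotheses of those two theorems will be satisfied automatically. Throughout I may also assume, replacing objects if necessary (using Lemma~\ref{cofibrantcomponents} and the evident weak equivalences), that the $R$‑algebras at hand are cofibrant where this is needed.

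I would construct the tower by induction on $i$. For the base case, apply Theorem~\ref{Ralgebras} with $L=P_0$ to the $R$‑algebra $A$; this produces an $R$‑algebra $A_0$ with $A_0\simeq P_0A$ (so $\pi_n(A_0)=0$ for $n>0$) and an $R$‑algebra morphism naturally weakly equivalent to $\eta_A\colon A\to P_0A$, which, since $A$ is cofibrant and (in the notation of Theorem~\ref{javierslemma}) $D_0A$ is a fibrant $R$‑algebra weakly equivalent to $A$, may be replaced by an honest $R$‑algebra map $\nu_0\colon A\to A_0$ inducing an isomorphism on $\pi_n$ for $n\le 0$. For the inductive step, assume given an $R$‑algebra $A_i$ with $\pi_n(A_i)=0$ for $n>i$ together with an $R$‑algebra map $\nu_i\colon A\to A_i$ inducing an isomorphism on $\pi_n$ for $n\le i$. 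Apply Theorem~\ref{mapsofRalgebras} to the $R$‑algebra morphism $\nu_i$ with $L=P_{i+1}$; this yields a homotopy‑commutative square of $R$‑algebra morphisms naturally weakly equivalent to the square $\eta_{\nu_i}\colon \nu_i\to P_{i+1}\nu_i$. Write its left column as $A^{\bullet}\to A_i^{\bullet}$ (weakly equivalent to $\nu_i$, so $A^{\bullet}\simeq A$ and $A_i^{\bullet}\simeq A_i$), its right column as $\tau_i\colon A_{i+1}\to A_i'$ (weakly equivalent to $P_{i+1}\nu_i$, so $A_{i+1}\simeq P_{i+1}A$ and $A_i'\simeq P_{i+1}A_i$), and its top and bottom edges as maps realizing $\eta^{P_{i+1}}_A$ and $\eta^{P_{i+1}}_{A_i}$ respectively. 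Since $\pi_n(A_i)=0$ for $n>i$, the map $\eta^{P_{i+1}}_{A_i}$ is a weak equivalence; hence the bottom edge is a weak equivalence of $R$‑algebras and $A_i'\simeq A_i$ as $R$‑algebras. Using cofibrancy of $A$, fibrancy of the rectified algebras, and a change of rings of the type carried out in the proof of Theorem~\ref{preservmod} (via Lemma~\ref{changeofrings}) to bring everything back over the fixed ring spectrum $R$, I replace the relevant zig‑zags by genuine $R$‑algebra maps $\nu_{i+1}\colon A\to A_{i+1}$ and $\tau_i\colon A_{i+1}\to A_i$. Then $A_{i+1}\simeq P_{i+1}A$, so $\pi_n(A_{i+1})=0$ for $n>i+1$ and $\nu_{i+1}$ induces an isomorphism on $\pi_n$ for $n\le i+1$, while the homotopy‑commutativity of the square together with the equivalence $A_i'\simeq A_i$ gives $\tau_i\circ\nu_{i+1}\simeq\nu_i$ as $R$‑algebra maps. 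This closes the induction and produces the required tower.

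I expect the main obstacle to be bookkeeping rather than anything conceptual. Each invocation of Theorem~\ref{Ralgebras} or Theorem~\ref{mapsofRalgebras} delivers its conclusion only up to natural weak equivalence and, a priori, over a ring spectrum merely weakly equivalent to $R$; so at every inductive stage one must combine cofibrancy of $A$, fibrancy of the rectified algebras, and a change‑of‑rings argument to turn zig‑zags into honest $R$‑algebra morphisms and to keep the whole diagram living over the single ring $R$, and then check that these choices can be made coherently so that the triangles genuinely commute up to homotopy of $R$‑algebra maps and the $A_i$ fit into one tower. Once this is arranged, the remaining assertions — connectivity of $P_iA$, the homotopy groups of the $A_i$, and the identification of each $\nu_i$ with the expected truncation — are immediate. (One could instead bypass the induction by left Bousfield localizing the model category of $R$‑algebras at the $\pi_{\le i}$‑equivalences and verifying that the underlying spectrum of the resulting localization of $A$ is $P_iA$, but the inductive argument above stays within the machinery already developed.)
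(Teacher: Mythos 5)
Your proposal is correct, and it reaches the Postnikov tower by a genuinely different organization of the same machinery. The paper does not argue by induction: it first produces \emph{every} $\nu_i\colon A\longrightarrow A_i$ by rectifying $\eta_i\colon A\longrightarrow P_iA$ via Theorem~\ref{Ralgebras} and lifting in the homotopy category of $R$\nobreakdash-algebras, and then builds $\tau_i$ from the naturality square of $\eta_i$ at $\nu_{i+1}$, rectifying the bottom arrow $P_i\nu_{i+1}$ (Theorem~\ref{mapsofRalgebras}) and the right-hand arrow $\eta_i\colon A_{i+1}\longrightarrow P_iA_{i+1}$ (Theorem~\ref{Ralgebras}) \emph{separately}; since this yields two a priori unrelated $R$\nobreakdash-algebra models of $P_iA_{i+1}$ (and of $P_iA$), the paper must compare them, which is done through part (iii) of Theorem~\ref{javierslemma}, restriction of colours, and Lemma~\ref{rectifying}, and this is precisely where cofibrancy of $R$ is used. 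You instead apply Theorem~\ref{mapsofRalgebras} once per stage, to $\nu_i$ with $L=P_{i+1}$, so the whole naturality square is rectified in one go; $\nu_{i+1}$ is read off the top edge and $\tau_i$ off the right-hand column after inverting the bottom edge, which is a weak equivalence because $\eta_{A_i}$ already is one ($\pi_n(A_i)=0$ for $n>i$). This buys a shorter argument: the comparison-of-rectifications step is absorbed into the square-level strength of Theorem~\ref{mapsofRalgebras} (whose statement you are entitled to use as a black box), and cofibrancy of $R$ plays no visible role; the connectivity bookkeeping ($R$, $A$, $A_i$ and their $P_{i+1}$\nobreakdash-truncations are connective) is correct, and your homotopy-category computation giving $\tau_i\circ\nu_{i+1}\simeq\nu_i$ is sound. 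Two small repairs: to lift the homotopy-category composites to honest $R$\nobreakdash-algebra maps you should, as the paper does, take $A_{i+1}$ to be a fibrant \emph{and} cofibrant replacement in the model category of $R$\nobreakdash-algebras (your appeal to Lemma~\ref{cofibrantcomponents}, which concerns components of cofibrant algebras over cofibrant operads, is not the relevant tool, and ``fibrancy of the rectified algebras'' is arranged, not automatic); likewise in the base case what matters is cofibrancy of $A$ and fibrancy of $A_0$, not fibrancy of $D_0A$.
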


\begin{proof}
For each $i\ge 0$, let $P_i$ denote localization with respect to
$f\colon \Sigma^{i+1}S \longrightarrow 0$ (where $S$ denotes the sphere spectrum)
in the category of symmetric spectra, and let $\eta_i$ be the corresponding coaugmentation.

From Theorem~\ref{Ralgebras} we infer that $\eta_i\colon A\longrightarrow P_iA$ is weakly equivalent
to a morphism of $R$\nobreakdash-algebras, which we denote by $\alpha_i\colon A'_i\longrightarrow A''_i$.
Let $A_i$ be a fibrant and cofibrant replacement of $A''_i$ in the model category of $R$\nobreakdash-algebras.
Thus $A_i\simeq P_iA$ as spectra.
Since $A$ is weakly equivalent to $A'_i$ as an $R$\nobreakdash-algebra,
we can consider the following composite of arrows in the homotopy category of $R$\nobreakdash-algebras:
\begin{equation}
\label{tobelifted}
\xymatrix{
A\ar@{.>}[r]^{\cong} &  A'_i\ar@{.>}[r]^{\alpha_i} & A''_i\ar@{.>}[r]^{\cong} & A_i.
}
\end{equation}
Since $A$ is cofibrant, there is a morphism of $R$\nobreakdash-algebras
$\nu_i\colon A\longrightarrow A_i$ lifting~(\ref{tobelifted}). Thus $\nu_i$ induces
isomorphisms on $\pi_n$ for $n\le i$, since $\eta_i$ does.

Now, since $\eta_{i}$ is a natural transformation, the following diagram commutes:
\[
\xymatrix{
A \ar[d]_{\eta_i} \ar[r]^-{\nu_{i+1}} & A_{i+1} \ar[d]^{\eta_i} \\
P_iA \ar[r]^-{P_i\nu_{i+1}} & P_{i}A_{i+1}.
}
\]
In this diagram, the lower horizontal arrow is a weak equivalence of spectra,
and, by Theorem~\ref{mapsofRalgebras}, it is weakly equivalent to a morphism
$\beta_i\colon B'_i\longrightarrow B''_i$ of $R$\nobreakdash-algebras, which is therefore
a weak equivalence of $R$\nobreakdash-algebras.
Likewise, by Theorem~\ref{Ralgebras},
the right\nobreakdash-hand vertical map is weakly equivalent to a morphism
$\gamma_i\colon C'_i\longrightarrow C''_i$ of $R$\nobreakdash-algebras, where
in addition $C'_i\simeq A_{i+1}$ as $R$\nobreakdash-algebras.

It follows from part (iii) of Theorem~\ref{javierslemma}
that each of these rectification steps is in fact a weak equivalence of
$(\Mor_{\mathcal{A}})_{\infty}$\nobreakdash-algebras.
(Here we have used the assumption that $R$ is cofibrant.)
Thus, by restriction of colours,
they induce weak equivalences of $\mathcal{A}_{\infty}$\nobreakdash-algebras on their domains and targets.
Hence, $(R,C''_i)$ and $(R,B''_i)$ are weakly equivalent as $\mathcal{A}_{\infty}$\nobreakdash-algebras.
By Lemma~\ref{rectifying}, they are in fact
weakly equivalent as $\mathcal{A}$\nobreakdash-algebras, meaning that $C''_i\simeq B''_i$
as $R$\nobreakdash-algebras. Similarly, $B'_i\simeq A''_i$ as $R$\nobreakdash-algebras.
Since $\beta_i$ is invertible in the homotopy category of $R$\nobreakdash-algebras,
we may consider the composite
\[
\xymatrix{
A_{i+1}\ar@{.>}[r]^{\cong} &  C'_i\ar@{.>}[r]^{\gamma_i} & C''_i\ar@{.>}[r]^{\cong} &
B''_i\ar@{.>}[r]^{\beta_i^{-1}} & B'_i\ar@{.>}[r]^{\cong} & A''_i\ar@{.>}[r]^{\cong} & A_i,
}
\]
which can be lifted to a map of $R$\nobreakdash-algebras $\tau_i\colon A_{i+1}\longrightarrow A_i$.
By construction, $\tau_i\circ\nu_{i+1}$ coincides with $\nu_i$ in the homotopy category of $R$\nobreakdash-algebras,
so $\tau_i\circ\nu_{i+1}\simeq\nu_i$ as $R$\nobreakdash-algebra morphisms, as claimed.
\end{proof}

\end{document}